\newcommand{\bpr}{\begin{trivlist} \item[]{\bf Proof. }}
\newcommand{\epr}{\hspace*{\fill} $\qed$\end{trivlist}}
\newcommand{\be}{\begin{eqnarray}}
\newcommand{\ee}{\end{eqnarray}}
\newcommand{\ba}{\begin{align}}
\newcommand{\ea}{\end{align}}
\newcommand{\bi}{\begin{itemize}}
\newcommand{\ei}{\end{itemize}}
\newcommand{\secref}[1]{Section~\ref{sec:#1}}
\newcommand{\seclab}[1]{\label{sec:#1}}
\newcommand{\eqlab}[1]{\label{eq:#1}}
\renewcommand{\eqref}[1]{(\ref{eq:#1})}
\newcommand{\figref}[1]{Fig.~\ref{fig:#1}}
\newcommand{\figlab}[1]{\label{fig:#1}}
\newcommand{\propref}[1]{Proposition~\ref{proposition:#1}}
\newcommand{\proplab}[1]{\label{proposition:#1}}
\newcommand{\lemmaref}[1]{Lemma~\ref{lemma:#1}}
\newcommand{\lemmalab}[1]{\label{lemma:#1}}
\newcommand{\corref}[1]{Corollary~\ref{cor:#1}}
\newcommand{\corlab}[1]{\label{cor:#1}}
\newcommand{\remref}[1]{Remark~\ref{remark:#1}}
\newcommand{\remlab}[1]{\label{remark:#1}}
\newcommand{\thmref}[1]{Theorem~\ref{theorem:#1}}
\newcommand{\thmlab}[1]{\label{theorem:#1}}
\newcommand{\defnlab}[1]{\label{defn:#1}}
\newcommand{\defnref}[1]{Definition~\ref{defn:#1}}
\newtheorem{theorem}{Theorem}[section]
\newtheorem{proposition}[theorem]{Proposition}
\newtheorem{definition}[theorem]{Definition}
\newtheorem{lemma}[theorem]{Lemma}
\newtheorem{cor}[theorem]{Corollary}
\newtheorem{remark}[theorem]{Remark}
\numberwithin{equation}{section}
\definecolor{orange}{RGB}{255,127,0}
\begin{document}
\title{Circularization in the damped Kepler problem}%: Existence of smooth invariant manifolds }
\author{K. U. Kristiansen and R. Ortega}
% \address{Department of Applied Mathematics and Computer Science, 
% Technical University of Denmark, 
% 2800 Kgs. Lyngby, 
% Denmark }

%  $\left[\begin{matrix}0 & -1 & 1\\2 & 4 & -2\\3 & 4 & 1\end{matrix}\right]$

%\author{S. Jelbart \and K. U. Kristiansen \and P. Szmolyan \and M. Wechselberger} 
%\date {}
%\date\today

%\WM{
%\note{WM: my suggestions for possible journals are:  Dynamical Systems (Taylor \& Francis), Journal of Dynamics and Differential equations (Springer).}
%}
 \begin{abstract}
 {In this paper, we revisit the damped Kepler problem within a general family of nonlinear damping forces with magnitude $\delta \vert u\vert^{\beta}\vert \dot u\vert^{\alpha+1}$, depending on three parameters $\delta>0,\alpha\ge 0$ and $\beta\ge 0$, and address the general question of circularization whereby orbits tend to become more circular as they approach the sun. Our approach is based on dynamical systems theory, using blowup and desingularization as our main technical tools. We find that $\gamma=\alpha+2\beta-3$ is an important quantity, with the special case $\gamma=0$ separating circularization ($-3<\gamma<0$) where the eccentricity converges to zero, i.e. $e(t)\rightarrow 0$ as $u(t)\rightarrow 0$, from cases ($\gamma>0$) where $e(t)\rightarrow 1$ as $u(t)\rightarrow 0$, both on open sets of initial conditions. We find that circularization for $-3<\gamma<0$ occurs due to asymptotic stability of a zero-Hopf equilibrium point (i.e., the eigenvalues are $\pm i \omega,0$) of a three-dimensional reduced problem (which is analytic in the blowup coordinates). The attraction is therefore not hyperbolic and in particular not covered by standard dynamical systems theory. Instead we use recent results on normal forms of the zero-Hopf to locally bring the system into a form where the stability can be addressed directly. The case $\gamma=0$ relates to a certain scaling symmetry (that is also present in the {undamped} Kepler problem) and in this case the system can be reduced to a planar system. We find that the eccentricity limits to $4\delta^2\in (0,1)$ on an open subset of initial conditions for ($0<\delta<\frac12$ and $\gamma=0$). We also describe different  properties of the solutions, including finite time blowup and the limit of the eccentricity vector. Interestingly, we find that circularization $e(t)\rightarrow 0$ implies finite time blowup of solutions. We believe that our approach can be used to describe unbounded solutions. }
 %
 %
 %determine all parameters for which circularization occurs ($-3<\alpha+2\beta-3<0$) and describe general properties of solutions in all cases,   In this context, circularization refers to the property that the eccentricity vector tends to $0$ for an open set of initial conditions. It is known that circularization does not occur in the linear case ($(\alpha,\beta)=(0,0)$)  nor in the case of the Poynting-Plummer-Danby case ($(\alpha,\beta)=(0,2)$).  this paper, we prove  that circularization is the exception when $(\alpha,\beta)\in \mathbb N_0^2$, insofar that it only occurs in the following three cases: $(\alpha,\beta)=(1,0),(0,1),(2,0)$. We also find that ellipticization, where the norm of the eccentricity vector tends to a constant value $e\in (0,1)$, only occurs when $(\alpha,\beta)=(1,1),(3,0)$ and $\delta\in \left(0,\frac12\right)$. In fact, we find that in these two cases, the system can be reduced to a planar system, from which explicit, global solutions with constant norm of the eccentricity can be obtained. Our approach is based on blowup and normal form theory. %However, this somewhat cumbersome process also allow us to present an (almost) complete geometric classification of all the cases.
% provide a complete geometric analysis of the problem, using 

 \end{abstract}
 
  \maketitle

  \bigskip
\smallskip

\noindent \textbf{keywords.} {Blowup, invariant manifolds, damped Kepler problem, circularization, normal forms.}
% \titl
% \author{Kristian Uldall Kristiansen}
% \begin{document}%plan_PmE
% \newcommand{\note}[1]{\vspace{5 mm}\par \noindent
% \marginpar{\textsc{Note}} 
% \framebox{\begin{minipage}[c]{0.95
% \textwidth} \flushleft \tt #1 \end{minipage}}\vspace{5 mm}\par}
\maketitle

\tableofcontents

\section{Introduction}
%\RO{RAFAEL} \KUK{KRISTIAN}
{All major planets in the Solar System {(Mercury being an exception)} have almost circular orbits. This fact is not predicted by the Keplerian theory and it has been attributed to the effect of some resistance force opposite to the velocity. In Cartesian coordinates, this effect can be described by the {damped} Kepler problem in the plane
\begin{align}\eqlab{keplerd}
\ddot{u}+\Delta(u,\dot{u})\frac{\dot{u}}{|\dot{u}|}=-\frac{u}{|u|^3},\; \; u\in \mathbb{R}^2 \setminus \{ 0\}.
\end{align}
{Here the function $\Delta$ is positive everywhere and models the type of friction}. We will focus on the following question: \textit{Under what conditions on $\Delta$ can we say that the orbits of \eqref{keplerd} have a tendency to become circular as they get close to the Sun?} This  {question} has a long tradition. A historical review, including the work of Tisserand and Encke, {is available} in the book
by See \cite{See}, published in 1910. Poincar\'e {also} considered this question in his Lessons on Cosmology (see \cite[chapter 6]{lecons} and \cite{poincare1912a} for an English translation). 
In all these classical works, it was assumed that the function $\Delta$ is of the type 
\begin{align}\eqlab{Dfunc}
\Delta(u,\dot u) = \delta \frac{\vert \dot u\vert^{\alpha+1}}{\vert u\vert^\beta},\quad \alpha,\beta\ge 0,\quad \delta>0;
\end{align}
  see \cite[Eq. (184)]{See} and \cite[Eq. (10), p. 122]{lecons}; when comparing with these references, please notice that we have shifted $\alpha$ by one unit. We will also accept this convention. {In \cite{lecons,See}, the equations \eqref{keplerd} were expressed in terms of the astronomical coordinates. Since the eccentricity $e$ is an astronomical coordinate, the notion of circularization was therefore naturally introduced as $e(t)\rightarrow 0$ for $t\rightarrow \omega$, where 
  \begin{align}\eqlab{collisiontime}
\mbox{$\omega$ is the time ($\omega\le \infty$)  of collision}.
\end{align}
In the present paper, we will define circularization completely  analogously, but $e(t)$ will not be a coordinate. Instead we express $e$ as a function of the phase space variables, $u$ and $\dot u$, for the equations in Cartesian coordinates, recall \eqref{keplerd}}. %This notion of circularization coincides with the classical definition in the elliptic region 
This approach has the advantage of extending to the complete phase space. See \secref{circularization} for more details.

%\par In the classical approach (see \cite{See} and \cite{lecons}) the problem of circularization is {approached} in a very direct way. 
{The problem of circularization was addressed by \cite{lecons,See} in a very direct way}. In particular, 
after expressing the eccentricity $e$ as a function of the true anomaly {$f$}, {i.e. $e=e(f)$}, the derivative $\frac{de}{df}$ was computed. In turns out that this derivative can change sign and {the goal of \cite{lecons,See} was to show that} the sequence of mean values over complete revolutions, $\frac{1}{2\pi}\int_{2\pi n}^{2\pi (n+1)} \frac{de}{df}{df}$, decreased to zero. The conclusions obtained by these methods are questionable because the formula for {$\frac{de}{df}$} is not based on the original differential equation, but on a modified equation obtained after averaging and truncation. {In the present paper, we will analyze the circularization problem rigorously using the modern machinery of dynamical systems.}

Our main result is concerned with all parameters $\alpha ,\beta \geq 0$, $(\alpha ,\beta )\neq (0,0)$ and we will prove that {circularization only occurs for}
\begin{align}
  -3<\gamma:=\alpha +2\beta -3<0.\eqlab{optimal}
\end{align}
In fact{, we find that} there exists an open and non-empty set of initial conditions leading to solutions with the circularization property when \eqref{optimal} holds. On the contrary, no solution can circularize if $ \gamma \geq 0$. The case $(\alpha ,\beta )= (0,0)$ has been excluded because it has been treated in previous papers and it is somehow exceptional. Circularization does occur (see \cite{hamilton2008a,haraux2021a,margheri2017a,margheri2017ab}) but only on a measure zero set of initial conditions (see also \cite{kristiansen2023revisiting}). The system for $\alpha =0$ has been analyzed in \cite{margheri2020a} and some of our conclusions were already obtained in that paper.
In particular the case $(\alpha ,\beta )= (0,2)$ has been analyzed in several papers because the equations can be integrated explicitly. See the references in \cite{margheri2020a}.

{Recently in \cite{kristiansen2023revisiting}, the first author used dynamical systems theory, with blowup, desingularization and normal forms as the main technical tools, to study the linear case $(\alpha ,\beta )=(0,0)$. The linear case is further distinguished by the presence of a conserved quantity (due to the eccentricity vector having a limit, see \cite{margheri2017a}), and the combination of blowup and normal form theory allowed the first author to solve some remaining questions regarding the smoothness of this first integral. We will use a similar approach in the present paper for $(\alpha ,\beta )\neq (0,0)$.}

Firstly, due to the invariance under rotations, {we reduce the {damped} Kepler problem  to a three-dimensional, first order system in the variables $(r,p,l)$ with $r=|u|$, $p=\dot{r}$, $l=|u\wedge \dot{u}|$.} In these coordinates, there is a singularity at $r=0$, but upon using appropriate time reparatrizations, we obtain a {\textit{desingularized} (and more regular)} vector field in the space $(r,v,l)\in [0,\infty )\times \mathbb{R}\times [0,\infty )$ with $v=pl$. In this way, a collision manifold $\{ r=0\}$ has been attached.
At first sight, this trick does not seem very useful because the vector field vanishes {(and is  degenerate)} on the boundaries ($r=0$) and ($l=0$). To overcome this difficulty, we follow \cite{kristiansen2023revisiting} and perform cylindrical blowup transformations of the lines $r=l=0$ and $r=v=0$: {
\begin{align*}
(\rho,"v",e^{i\phi}) \mapsto \begin{cases}
r&= \rho^2 \cos \phi,\\
v &="v",\\
l &=\rho \sin \phi,\end{cases}
\quad \mbox{resp.}\quad 
    (\rho,\mu ,e^{i\phi}) \mapsto \begin{cases}
r&= \rho^2,\\
v &=\mu \cos \phi,\\
l &=\rho \mu \sin \phi.\end{cases}
\end{align*}
(The former corresponds to \eqref{buV} whereas the latter one corresponds to {a combination of \eqref{buV} and} \eqref{buP2} below.) 
% \KUK{Some of these blowups are changes of variables of the type $(\rho_1 ,\rho_2 ,e^{i\phi})\mapsto (r,v,l)$ with $$r=\rho_1^{k_1} \cos \phi, \; \; \; \; v=\rho_2^{k_2} ,\; \; \; \;  l=\rho_1^{k_3}\sin \phi$$
% or
% $$r=\rho_1^{k_1}, \; \; \; \; v=\rho_1^{k_2} \sin \phi ,\; \; \; \;  l=\rho_1^{k_3} \rho_2^{k_4} \cos \phi.$$}
 The exponents are adjusted so that in the new variables, and after a time rescaling {(corresponding to division of the right hand side by a positive quantity for $\rho>0,\mu>0$)}, the vector field extends continuously and nontrivially (i.e. the vector field is not identically zero) to $\rho=0,\mu=0$; it is this process of time rescaling that is known as desingularization.} {In general, the most useful situation of blowup} is when the desingularization leads to a {smooth system having hyperbolic equilibria} within the boundary of the phase space, so that the usual hyperbolic methods (linearization, stable-, unstable- and center manifolds, etc) of dynamical systems theory, see e.g. \cite{perko2001a}, can be applied; see also \cite[Chapter 3.3]{dumortier2006a} for general results on blowup (including the use of Newton polygons to determine the weights) for planar systems and \cite{jelbart2021a,uldall2021a,kristiansen2020a} for the use of blowup to gain smoothness.  {This will in some ($\alpha,\beta$-dependent) cases require additional/successive blowup transformations in the present case.} %This will be our approach when there is no circularization. 
{However, for the region of parameters ($\gamma<0$) with circularization,  the blowup approach will not lead to hyperbolicity (but rather ellipticity, as in \cite{kristiansen2023revisiting}) and this situation will be more delicate. In particular, we find that zero eccentricity  corresponds (in a certain sense) to a zero-Hopf equilibrium point and we prove asymptotic stability of this point (so that $e(t)\rightarrow 0$ on an open set of initial conditions) using recent results on convergence of analytic normal forms for the zero-Hopf.
}

\subsection{Overview}
{The paper is organized as follows: In \secref{circularization}, we first introduce the most basic concepts (angular momentum, eccentricity, etc) and then define our notion of circularization, see \defnref{circ}. Subsequently, in \secref{mainresult}, we present our main results, see \thmref{main}. % and then lay out our strategy of proof, based upon blowup. 
The details of the blowup transformations, used to prove \thmref{main}, are given in \secref{blowup}. They will depend upon $\alpha$ and $\beta$. In particular, we divide the analysis into three main cases: $\gamma:=\alpha+2\beta-3>0$ (see \secref{gammaPos}), $\gamma<0$ (see \secref{gammaNeg}) and $\gamma=0$ (see \secref{gammaEq0}). For the convenience of the reader, \secref{gammaPos}--\secref{completing} -- where the main results are proven (through the proof of a series of propositions) -- can be read independently of the technical details of \secref{blowup}. We conclude the paper in \secref{discussion} with a discussion section.}

% 
% {Something here, including: We emphasize that \secref{blowup} is introduced to motivate the coordinates; the analysis in \secref{gammaPos}, \secref{gammaNeg} and \secref{gammaEq0} can be read independently.}
\section{The notion of circularization}\seclab{circularization}
% 
%\note{I've changed this a bit. }
{In principle the equation \eqref{keplerd} should be considered in the three dimensional space, but a simple computation on the angular momentum 
\begin{align}\eqlab{Ldefn}
L(u,\dot{u})=u\wedge \dot{u}, 
\end{align}
shows that }
\begin{align}
 \frac{d}{dt} L = -\Delta(u,\dot u) L, \eqlab{Leqn}
\end{align}
and the direction of $L\in \mathbb R^3$ is therefore preserved. Consequently, as in the {undamped} Kepler problem, corresponding to $\delta=0$, the dynamics with $L\ne 0$ are confined to an orbital plane (with normal vector $L$) and for this reason, we will consider \eqref{keplerd} on the phase space
$${\mathcal P}:=(\mathbb{R}^2 \setminus \{ 0\} )\times \mathbb{R}^2 $$
{with generic points $(u,\dot u)$.}

\subsection{The {undamped} system}
% Given a point $(\xi_0,\dot \xi_0)$ in this space we consider the solution $\xi(t;\xi_0,\dot \xi_0)$ of the {undamped} problem
For $\delta=0$:
\begin{align}\eqlab{kepler0}
\ddot{u} =-\frac{u}{|u |^3},
\end{align}
corresponding to the {undamped} Kepler problem, 
the angular momentum $L$ is a conserved quantity and for $l:=\vert L\vert \ne 0$, the set 
% 
% For initial conditions with non-zero angular momentum, $\dot x_0\wedge v\neq 0$, the {\it osculating conic} is defined as the set
$$\mathcal{C} =\{ u(t):\; t\in \mathbb{R} \} ,$$
with $u(t)$ a solution of \eqref{kepler0}, 
is a conic section (ellipse, parabola or {a branch of hyperbola}). 
%In classical textbooks (see for instance \cite{Moulton}, section 172), it is often assumed that %$\mathcal{C}$, known as the {\it osculating conic},  is an ellipse and so the phase space $\mathcal P$ is reduced to %points $(u,\dot u)\in \mathcal P$ with negative energy 
%\begin{align*}
% E(u,\dot u):= \frac{1}{2}\vert \dot u\vert^2-\frac{1}{\vert u\vert}.
%\end{align*} 
%We do not impose this restriction. \note{Do we need this??} % and sometimes the conic $\mathcal{C}$ will be %a parabola or the branch of a hyperbola whose closest phocus is placed at the origin.
 The conic can be described in Cartesian coordinates by the equation
\begin{equation}\eqlab{lag}
|u |+\langle \mathcal{E} ,u \rangle =l^2,\; \; u \in \mathbb{R}^2.
\end{equation}
In particular, the eccentricity vector $\mathcal{E}$ is given by the formula
\begin{align}
\mathcal{E} (u,\dot u)=\dot u\wedge L(u,\dot u)-\frac{u}{|u|},\eqlab{ecc}
\end{align}
and it is also a conserved quantity for \eqref{kepler0} on $\mathcal P$. The norm of $\mathcal{E}$,
$$e=|\mathcal{E} |,$$
is the eccentricity.
The circular motions of Kepler problem correspond to zero eccentricity, $e=0$. 

%\begin{remark}
%We emphasize that the formula
%\begin{equation}\label{port}
%|u|+\langle \mathcal{E} (u,\dot u), u\rangle =|L(u,\dot u)|^2,
%\end{equation}
%relating $\mathcal E$ and $\mathcal L$,
% where 
% $$L(u,\dot u)=u\wedge \dot ,\; \; \;  \mathcal{E} (x,v)=v\wedge L(x,v)-\frac{x}{|x|}.$$
%is valid on the whole phase space $\mathcal P$ (see \cite{MOR}).

%\note{do we need this?}
%\end{remark}

% $\mathcal L$ and $\mathcal E$ are not independent, but related by the identity:
% \begin{align}\eqlab{identity}
% \vert u\vert +&\langle \mathcal E(u,\dot u),u\rangle = \vert L(u,\dot u)\vert^2.
% \end{align}

% \begin{equation}\label{port}
% |u|+\langle \mathcal{E} (u,\dot u), u\rangle =|L(u,\dot u)|^2
% \end{equation}

\subsection{The {damped} system}
Assume now that $u(t)=u(t;u_0,\dot u_0)$ is a solution of the {damped} Kepler problem \eqref{keplerd} with initial conditions $u(0)=u_0,\,\dot u(0)=\dot u_0$, $(u_0,\dot u_0)\in \mathcal P$, defined on a forward maximal interval $[0,\omega)$ where $\omega\in (0,\infty]$. The \textit {osculating conic} $\mathcal C=\mathcal C(t)$ {is defined as the conic that would correspond to the solution of the {undamped} problem passing through the point $(u,\dot u)(t)\in \mathcal P$. This curve evolves with time according to the equation  
\begin{equation} 
|u (t)|+\langle \mathcal{E}(t)  ,u(t) \rangle =l^2 (t),
\end{equation}
where $l(t)=\vert u(t)\wedge \dot u(t)\vert$ and $\mathcal{E}(t) =\mathcal{E}(u(t),\dot u(t))$, recall \eqref{Ldefn} and \eqref{ecc}. 
To justify this formula, it is useful to recall the following identity
$$
 |u|+\langle \mathcal{E} (u,\dot u), u\rangle =|L(u,\dot u)|^2 ,
$$ 
 valid for any point $(u,\dot u)$ in the phase space $\mathcal P$.
Here $\mathcal{E}$ and $L$ are interpreted as functions in the independent variables $x$ and $v$. The eccentricity of the osculating conic is
% Given a solution $u(t)$ of the {damped} problem,
$$e(t)=| \mathcal{E} (u(t),\dot{u}(t))|.$$
}
\begin{definition}\defnlab{circ}
We say that there is circularization at $(u_0,\dot u_0)\in \mathcal P$ if
$$e(t)\to 0 \; \; \; {\rm as}\; \; t \rightarrow \omega^- ,$$
recall \eqref{collisiontime}.
Moreover, we say that \eqref{keplerd} has the circularization property on an open set if there is circularization on open set of initial conditions in $\mathcal P$.
\end{definition}
{In essence, this is the traditional notion of circularization, although the formulation may look different at first.
In classical textbooks (see for instance \cite[Section 172]{moulton1970introduction}), the function $e(t)$ is defined in terms of the astronomical coordinates and so the phase space must be reduced to the region of negative energy $E<0$ where the orbits of the {undamped} problem are ellipses. More precisely, to the subset of $\mathcal P$ composed by the points $(u,\dot u)$ such that $L(u,\dot u)\neq 0$ and
\begin{align*}
 E(u,\dot u):= \frac{1}{2}\vert \dot u\vert^2-\frac{1}{\vert u\vert} <0.
\end{align*}
Our formulation is based on Cartesian coordinates and is valid on the whole phase space $\mathcal P$.
}
%\begin{remark}
 %In the following, we write $t\rightarrow \omega^-$ as $t\rightarrow \omega$.
%\end{remark}

% To obtain an explicit formula for $e$, we recall the identity

% In the {undamped} problem $ \mathcal{E} $ and $L$ are first integrals and we go back to equation (\ref{lag}) with $\xi =\xi (t;x,v)$ and $k=|L(x,v)|^2$. 

%From the equation \eqref{lag}, we deduce that circularization implies that
%$$\frac{|u(t)|}{l (t)^2} \to 1\; \; \; {\rm as}\; \; t\rightarrow \omega ,$$
%where $l (t) =|L(u(t),\dot{u} (t))|$.
%\vskip 0.2cm \noindent

%\begin{remark} Later (see \secref{gammaEq0}) we will find solutions satisfying the condition
%$$\frac{|u(t)|}{l (t)^2} \to m\; \; \; {\rm as}\; \; t\rightarrow \omega ,$$
%where $0<m<1$. This cannot be understood as a sort of weak circularization. Indeed, from  \eqref{lag},
%$$\langle \mathcal{E} (u(t),\dot{u}(t)), \frac{u(t)}{l (t)^2} \rangle \to 1-m$$ 
%and, from Cauchy-Schwartz inequality,
%$$\liminf_{t\rightarrow\omega} e(t)\geq \frac{1-m}{m} >0.$$
%This means that in this case we are far from circular motions.
%\end{remark}

\section{Main result}\seclab{mainresult}

Our main result can be summarized as follows. 
%{Should be: circularization iff $-3<\gamma<0$ and $\mathcal E$ %has a limit}
\begin{theorem}\thmlab{main}
{Consider \eqref{keplerd} with $\Delta$ given by \eqref{Dfunc} for $\delta>0$, $\alpha\ge 0$, $\beta\ge 0$, $\alpha +\beta >0$ and define \begin{align}\eqlab{gamma}\gamma:=\alpha+2\beta -3.
\end{align} 
%and $r(t):=\vert u(t)\vert$. 
Then some solution of \eqref{keplerd} has the circularization property if and only if $$-3<\gamma<0.$$
Moreover, given $\alpha$, $\beta$ and $\delta$, then there exists a non-empty, open set $\mathcal{U} \subset \mathcal P$ such that for every solution $u(t)$ with initial condition $(u(0),\dot{u}(0))\in \mathcal{U}$ the following holds:
\begin{enumerate}
\item If $\gamma\in (-3,0)$ then $\omega <\infty$, $u(t)\to 0$ and $e(t)\to 0$ as $t\to \omega$. Finally, $u(t)$ rotates infinitely many times around the origin for $t\in [0,\omega)$; more precisely, the argument $\theta(t)$ of $u(t)$ diverges, i.e. $\lim_{t\rightarrow \omega} \theta(t) =+\infty$. 
\item If $\gamma\ge 0$ then ($\omega=\infty$ if and only if $\alpha-\beta+3<0$),
  $u(t)\rightarrow 0$ and $\mathcal{E}(t)\to \mathcal{E}_{\omega}$ as $t\to \omega$ for some vector $\mathcal{E}_{\omega}$ on $S^1$, i.e. $|\mathcal{E}_{\omega}|=1$. Here $\mathcal{E}(t)$ denotes the eccentricity vector corresponding to $(u(t),\dot u(t))\in \mathcal P$. In addition, $u(t)$ rotates finitely many times around the origin; in particular, the argument $\theta(t)$ of $u(t)$ converges, i.e. $\lim_{t\rightarrow \omega} \theta(t)\in \mathbb R$ exists. 
   
  \item If $\gamma =0$ then, as $t\to \omega$, $u(t)\to 0$, $e(t)\to 4\delta^2$ if $\delta <\frac{1}{2}$ and $e(t)\to 1$ if $\delta \geq \frac{1}{2}$. 
  \end{enumerate}
  
% \begin{align*}
%  \alpha-\beta+3>0. 
% \end{align*}
}
% \begin{align*}

% \begin{align}\eqlab{PRsets}
%  \textnormal{P}:=\{(\alpha,\beta)\in \overline{\mathbb R}_+^2\,:\,0<\alpha+2\beta<3\},\quad \textnormal{R}:=\{(\alpha,\beta)\in \overline{\mathbb R}_+^2 \,:\,\alpha+2\beta=3\}.
% \end{align}
% Then the following holds:
% \begin{align*}
% \textnormal{Circularization in the sense of Poincar\'e}&\Longleftrightarrow (\alpha,\beta)\in \textnormal{P},\\
% \textnormal{Circularization only in the sense of rescaling}&\Longleftrightarrow \,(\alpha,\beta)\in \textnormal{R} \,\, \textnormal{and}\,\, \delta\in \left(0,\frac12\right).
% \end{align*}
\end{theorem}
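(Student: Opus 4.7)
The plan is to follow the dynamical systems strategy sketched in the introduction: reduce the planar damped Kepler problem to a three-dimensional first-order system on the radial variables, desingularize the singularity at collision by a time reparametrization, and then perform $(\alpha,\beta)$-dependent cylindrical blowup transformations of the degenerate lines $r=l=0$ and $r=v=0$ in the resulting phase space. Once the blowup is complete, the statements about $u(t)\to 0$, $e(t)$, $\mathcal{E}(t)$ and $\theta(t)$ should follow from a careful dynamical analysis of invariant manifolds of (partially) hyperbolic or normally elliptic equilibria lying on the blown-up collision manifold.

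First I would use the rotational symmetry and \eqref{Leqn} to reduce \eqref{keplerd} to a three-dimensional system for $(r,p,l)=(|u|,\dot r,|u\wedge\dot u|)$, and then divide the right-hand side by an appropriate positive power of $(r,l)$ so that the vector field extends continuously to $r=0$; the variable $v=pl$ is chosen precisely so that the expressions for $\mathcal{E}$ and $e$ become polynomial in $(r,v,l)$. Next I would split into the three regimes: for $\gamma>0$ the relevant blowup weights can be read off from a Newton polygon argument (following \cite[Chapter 3.3]{dumortier2006a}) and one expects the desingularized field on the collision sphere to have hyperbolic equilibria corresponding to the endpoint $e=1$; for $\gamma=0$ the scaling symmetry present already in the undamped Kepler problem reduces the blown-up system to a planar vector field whose phase portrait can be drawn explicitly, producing the bifurcation value $\delta=\tfrac12$ and the limits $e\to 4\delta^2$ or $e\to 1$; and for $-3<\gamma<0$ the equilibrium at the circular motion becomes a zero-Hopf (eigenvalues $\pm i\omega,0$), where I would apply the analytic normal form results of \cite{kristiansen2023revisiting} to bring the system into a form where asymptotic stability on an open set can be read off directly, typically from a Lyapunov-type coefficient in the normal form.

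From the stability analysis I would extract the dynamics in the original variables. For case (1), asymptotic stability of the zero-Hopf point on the blown-up collision manifold gives an open set of initial data whose forward trajectories converge to it; pulling back through the blowup and time rescaling shows $r\to 0$ with $e\to 0$, while the presence of the pair $\pm i\omega$ forces $\theta(t)\to +\infty$, and an integration of $\dot t$ against the rescaling factor gives $\omega<\infty$. For case (2), the hyperbolic attracting equilibria on the collision manifold correspond to $e=1$, so $\mathcal{E}(t)$ limits to a unit vector and $\theta(t)$ is forced to converge because the ``angular'' component of the blown-up field vanishes at these equilibria; the dichotomy $\omega<\infty\iff \alpha-\beta+3\geq 0$ comes from estimating the integral of $1/r$ along the stable manifold. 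Case (3) follows directly from the reduced planar picture. The ``only if'' statement—that no solution circularizes when $\gamma\geq 0$—would be obtained by showing that on every trajectory the reduced planar/blown-up system avoids the $e=0$ equilibrium, typically via a monotonicity or invariant-region argument.

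The principal obstacle is the zero-Hopf analysis in case (1): standard hyperbolic machinery (stable manifold theorem, linearization) does not apply, and one cannot simply invoke Lyapunov's theorem because the linearization has a zero eigenvalue. The delicate point is establishing that the nonlinear terms in the convergent analytic normal form do supply negative dissipation along the center direction for all admissible $(\alpha,\beta,\delta)$ with $-3<\gamma<0$; this is where the recent normal form technology cited in the introduction is essential, and it also drives the conclusion that circularization entails $\omega<\infty$, since the rate of attraction along the center manifold combined with the blowup weight makes the collision time integral convergent. A secondary but nontrivial issue is matching the different blowup charts (and sometimes performing successive blowups) in a parameter-uniform way so that the open set $\mathcal{U}\subset\mathcal{P}$ can be produced in the original phase space coordinates.
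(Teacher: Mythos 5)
Your proposal follows essentially the same route as the paper: reduction to $(r,v,l)$ with $v=pl$, desingularization at $r=0$, $\gamma$-dependent cylindrical blowups, a (partially) hyperbolic attracting equilibrium with $e=1$ for $\gamma>0$, the planar reduction with the $\delta=\tfrac12$ dichotomy for $\gamma=0$, and a zero-Hopf stability analysis for $-3<\gamma<0$, with the finite-time-blowup and $\theta(t)$ statements extracted from the decay rates along the relevant invariant manifolds. Two points where the sketch under-delivers relative to the paper's argument are worth flagging. First, the decisive step for $-3<\gamma<0$ is not supplied: the paper does not use normal form results from the linear-damping reference you cite, but rather Bittmann's convergent sectorial normal form for doubly-resonant saddle-node vector fields (eigenvalues $\pm i\omega,0$ with $x'=-x^2$), and the whole case hinges on the explicit residue computation $a_1+a_2=-2(\alpha+\beta)/\tilde\gamma<0$, which is exactly the ``negative dissipation along the center direction'' you identify as the principal obstacle but do not verify; as written, the hardest part of the proof is deferred rather than executed. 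Second, for $\gamma>0$ the blown-up equilibrium is not hyperbolic --- its linearization has eigenvalues $0,-\delta^{1/(1+\alpha)},-(1+\alpha)\delta^{1/(1+\alpha)}$ --- so the attraction, the rate $q_1(\tau)\sim\tau^{-1/2}$ that yields the $\alpha-\beta+3$ dichotomy for $\omega$, and the nonuniqueness of the center manifold all require a center-manifold reduction, not the stable manifold theorem; your Newton-polygon heuristic correctly locates the equilibrium but the subsequent analysis needs this extra step.
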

{
\begin{remark}
    It is also possible to describe the behavior of the radial velocity for solutions with initial conditions in $\mathcal{U}$. Let $r(t):=\vert u(t)\vert$. Then in case (1),
     \begin{align*}
 \limsup_{t \rightarrow \omega} \left| \frac{d}{dt} r(t) \right| \begin{cases}
                     =\infty & \mbox{for $\alpha+\beta-1>0$},\\
                     \in (0,\infty) & \mbox{for $\alpha+\beta-1=0$},\\
                     =0 & \mbox{for $\alpha+\beta-1<0$}.
                    \end{cases}
\end{align*}
In case (2),
\begin{align*}
   \lim_{t\rightarrow \omega} \frac{d}{dt}r(t)= \begin{cases} 
                    \infty & \mbox{for $\beta<2$},\\
                   -\delta^{-\frac{1}{\alpha+1}} & \mbox{for $\beta=2$}, \\
                0 & \mbox{for $\beta>2$}.
                   \end{cases}
  \end{align*}
\end{remark}

We will refer to $\gamma=0$ as the critical case. It is distinguished by the following property:
\begin{lemma}\lemmalab{critical}
 Suppose that either $\gamma=0$ or $\delta=0$. Then 
for every $\mu,\lambda\in \mathbb R$ satisfying $\mu^2\lambda^3=1$, the following holds: If $t\mapsto u(t)$ is a solution of \eqref{keplerd} then so is $t \mapsto \lambda u(\mu t)$.
 \end{lemma}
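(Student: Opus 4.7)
The plan is to verify the claim by direct substitution, which reduces the lemma to a computation tracking how each term of \eqref{keplerd} scales under the transformation $u(t)\mapsto \tilde u(t):=\lambda u(\mu t)$. Throughout we may assume $\mu,\lambda>0$, since $\mu^2\lambda^3=1$ forces $\lambda>0$, and (in the damped case) the sign of $\mu$ must be positive because the damping term is not invariant under time reversal.

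First, I would compute $\dot{\tilde u}(t)=\lambda\mu\,\dot u(\mu t)$ and $\ddot{\tilde u}(t)=\lambda\mu^{2}\,\ddot u(\mu t)$, and then evaluate the three terms of \eqref{keplerd} at $\tilde u$. The Keplerian force transforms as
\begin{equation*}
-\frac{\tilde u(t)}{|\tilde u(t)|^{3}}=-\lambda^{-2}\,\frac{u(\mu t)}{|u(\mu t)|^{3}},
\end{equation*}
while the damping term transforms as
\begin{equation*}
\delta\,\frac{|\dot{\tilde u}|^{\alpha+1}}{|\tilde u|^{\beta}}\,\frac{\dot{\tilde u}}{|\dot{\tilde u}|}=\delta\,\lambda^{\alpha+1-\beta}\mu^{\alpha+1}\cdot\frac{|\dot u(\mu t)|^{\alpha+1}}{|u(\mu t)|^{\beta}}\,\frac{\dot u(\mu t)}{|\dot u(\mu t)|}.
\end{equation*}
Substituting into \eqref{keplerd} for $\tilde u$ and dividing through by $\lambda\mu^{2}$ yields
\begin{equation*}
\ddot u(\mu t)+\delta\,\lambda^{\alpha-\beta}\mu^{\alpha-1}\cdot\frac{|\dot u(\mu t)|^{\alpha+1}}{|u(\mu t)|^{\beta}}\,\frac{\dot u(\mu t)}{|\dot u(\mu t)|}=-\lambda^{-3}\mu^{-2}\,\frac{u(\mu t)}{|u(\mu t)|^{3}}.
\end{equation*}

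Next I would read off the matching conditions with the original equation at time $\tau=\mu t$: the Keplerian term matches iff $\lambda^{3}\mu^{2}=1$, and in the presence of damping ($\delta>0$) the damping term additionally requires $\lambda^{\alpha-\beta}\mu^{\alpha-1}=1$. Using $\mu=\lambda^{-3/2}$ from the first relation, the second becomes $\lambda^{(\alpha-\beta)-3(\alpha-1)/2}=1$, i.e.\ $\alpha+2\beta-3=0$, which is exactly $\gamma=0$. If instead $\delta=0$, only the Keplerian condition is needed and we recover the classical Kepler scaling law (Kepler's third law).

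There is no genuine obstacle here; the only mild care-point is bookkeeping of absolute values and signs in the damping term (in particular that $|\dot{\tilde u}|=\lambda\mu|\dot u|$ and $\dot{\tilde u}/|\dot{\tilde u}|=\dot u/|\dot u|$ under the assumption $\mu>0$), which is why the statement is naturally restricted to positive scaling factors. Since $u$ is by hypothesis a solution of \eqref{keplerd} at the point $\mu t$, the displayed identity is satisfied, and hence $\tilde u$ is also a solution, completing the argument.
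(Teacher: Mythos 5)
Your direct-substitution computation is exactly the ``simple calculation'' that the paper leaves to the reader, and the scaling exponents you derive (matching condition $\lambda^{3}\mu^{2}=1$ for the Keplerian term, plus $\lambda^{\alpha-\beta}\mu^{\alpha-1}=1$, equivalently $\gamma=0$, for the damping term) are correct. Your remark that the damped case forces $\mu>0$ (since time reversal flips the sign of the friction term) is a legitimate point of care that the paper's statement of the lemma glosses over.
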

\begin{proof}
 Simple calculation. 
\end{proof}

We summarize the results of \thmref{main} and the above remark in \figref{diagram}. Notice that these results are only valid on an open set of initial conditions, i.e. not on the whole phase space. In fact, we know that there exists unbounded solutions $u(t)\rightarrow \infty$ of \eqref{keplerd} in some cases. However, we believe that we can show that the open set in \thmref{main} coincides with the collision set where $u(t)\rightarrow 0$. But 
this requires more effort and we therefore leave this to future work. See \remref{r1zerocrit} for a discussion of this aspect in the context of the critical case $\gamma=0$.
%From \cite{} it is known that in the case of linear damping $\alpha=\beta=0$ %there is circularization on a submanifold, but not an open set and %\eqref{keplerd}$_{\alpha=\beta=0}$ does therefore not have the circularization %property. The reference \cite{} used a blowup approach to obtain this result. %We will follow a similar approach in the present paper for $(\alpha,\beta)\ne 
%(0,0)$. We find that there is not circularization at any point for $\gamma\ge %0$. In particular, we find that $\liminf_{t\rightarrow \omega} e(t)>0$ for %$\gamma\ge 0$. 

\begin{figure}[h!]
\begin{center}
\includegraphics[width=.65\textwidth]{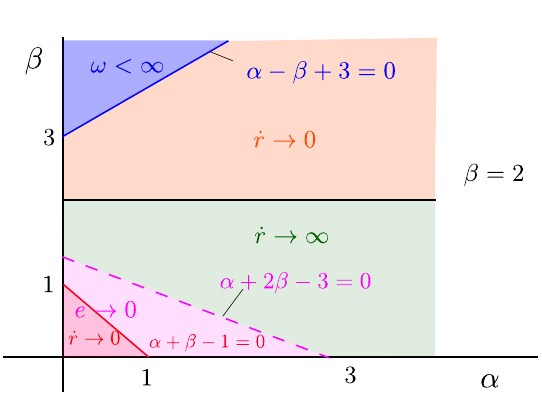}
\end{center}
\caption{Illustration of the results of \thmref{main}. Most importantly, \eqref{keplerd} only has the circularization property in the magenta region, strictly below the magenta and dashed line $\gamma=\alpha+2\beta-3<0$. Moreover, it is interesting to note that for $\gamma\ne 0$ global existence (i.e. $\omega=\infty$) 
%of solution $u(t)\rightarrow 0$ 
{only occurs} within the green and orange region; {in the blue and magenta regions 
%where circularization occurs $e(t)\rightarrow 0$ 
we have $\omega<\infty$.}}
\figlab{diagram}
% Remark: If $U$ smooth for $|u|<\delta$, the compact manifolds lie inside $|u|<\delta$ for large $n$
\end{figure}

% XXXXXXXXXXXXXXXXXXXXXXXXXXXXXXXXXXXXXXXXXXXXXXXX
% \begin{remark}
% {We emphasize, that \eqref{keplerd} may also have unbounded solutions (provided that $\beta\ne 0$, see \cite{}), even for $l\ne 0$.  %{Rafael: Could you finish this sentence? I believe you had a reference that showed that for $\beta=0$ all orbits are bounded} 
% The existence of unbounded solutions can be studied using the same methods (see also \cite[]{}). In fact, it is relatively easy to show the existence of such solutions for $\gamma\ge 0$, see \remref{unboundedPos} and  \remref{unbounded} below, but we leave the general classification of unbounded solutions to future work. In this paper, we will only focus on solutions $u(t)\rightarrow 0$.}
% \end{remark}
In order to prove \thmref{main}, we first identify $\mathbb R^2$ with $\mathbb C$ in the usual way and put 
\begin{align}
u=re^{i\theta},\eqlab{ueqn}
\end{align}
so that $r=\vert u\vert$. 
Then {it is not restrictive to assume}
\begin{align}\eqlab{leqn}
l:=\vert L\vert =r^2\dot \theta\ge 0.
\end{align}
 Moreover, \eqref{keplerd} becomes the first order system
\begin{equation}\eqlab{rttlt}
\begin{aligned}
\dot r &= p,\\
 \dot p &=-\frac{1}{r^2}+\frac{l^2}{r^3}-\delta \frac{\left(\frac{l^2}{r^2}+p^2\right)^{\frac{\alpha}{2}}}{r^\beta}p,\\
 \dot l&=-\delta \frac{\left(\frac{l^2}{r^2}+p^2\right)^{\frac{\alpha}{2}}}{r^\beta} l,
\end{aligned}
\end{equation}
for $r>0$. In these coordinates, we also have that 
\begin{align}
 e = l^2 p^2 +\frac{(l^2-r)^2}{r^2},\eqlab{ehere2}
\end{align}
using \eqref{ecc}. 
% Since $l>0$ is decreasing (and therefore bounded), we have that if $r(t)\rightarrow \infty$ for $t\rightarrow \omega$ then $$\liminf_{t\rightarrow\omega}  e(t)\ge 1.$$ Consequently, see \defnref{circ}, in order to address the issue of circularization it suffices to restrict attention to $r(t)\rightarrow 0$. In this case, $l(t)\rightarrow 0$ also. 
%Unless $\beta\ge 2$, then we will see that 
{In many cases,} $p$ is unbounded on trajectories. For this reason, it is useful to introduce 
\begin{align}\eqlab{veqn}
v = pl,
\end{align} such that 
\begin{equation}\eqlab{rvl0}
\begin{aligned}
 \dot r &=l^{-1} v,\\
 \dot v&= r^{-\alpha-\beta-3}l^{-\alpha-1} \left(- r^{\alpha+\beta+1}l^{\alpha+2}+r^{\alpha+\beta}l^{\alpha+4}  - 2\delta r^3 \left(l^4 +r^2 v^2\right)^{\frac{\alpha}{2}}lv \right),\\
 \dot l &=-\delta  r^{-\alpha-\beta} \left(l^4 +r^2 v^2\right)^{\frac{\alpha}{2}} l^{1-\alpha}.
\end{aligned}
\end{equation}
As in \cite{kristiansen2023revisiting},  this leads to a desirable compactification, insofar that $v$ {will} remain bounded (at least if $r\rightarrow 0$). {In these coordinates, we have} the following simple expression for the eccentricity
\begin{align}
 e = v^2 +\frac{(l^2-r)^2}{r^2}.\eqlab{Enorm}
\end{align}
This directly leads to the following characterization of circularization:
\begin{lemma} \lemmalab{circ} The initial condition $(u_0 ,\dot{u}_0 )\in \mathcal P$ with $L(u_0 ,\dot{u}_0 )\neq 0$ has the circularization property for \eqref{keplerd} if and only if the solution $(r,v,l)(t)$ of \eqref{rvl0}, with initial conditions $(r,v,l)(0)=(r_0,v_0,l_0)$, $r_0 =|u_0 |$, $l_0 =|L(u_0 ,\dot{u}_0 )|$, $v_0 =\frac{l_0}{r_0}\langle u_0 ,\dot{u}_0 \rangle$ satisfies $v(t)\rightarrow 0$ and $\frac{r(t)}{l(t)^2}\rightarrow 1$ for $t\rightarrow \omega$.
%\eqref{keplerd} has the circularization property if and only if %solutions $(r,v,l)(t)$ of \eqref{rvl0}, with initial conditions %$(r,v,l)(0)=(r_0,v_0,l_0)\in (0,\infty)\times \mathbb R\times % (0,\infty)$, satisfy $v(t)\rightarrow 0$ and $\frac{r(t)}%{l(t)^2}\rightarrow {1}$ for $t\rightarrow \omega$ on an open %set of initial conditions $(r_0,v_0,l_0)\times (0,\infty)\times %\mathbb R\times (0,\infty)$. 
\end{lemma}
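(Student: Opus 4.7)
The plan is to read the lemma directly off the coordinate formula \eqref{Enorm} for the (squared) eccentricity, supplemented by a verification that the $(r,v,l)$-description is well-defined along the trajectory. The argument splits naturally into three short steps.

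First, I would verify that $(r_0, v_0, l_0)$ in the statement are indeed the $(r,v,l)$-coordinates of the initial point $(u_0,\dot{u}_0)$. The only piece requiring a computation is $v_0$: writing $u = re^{i\theta}$ and differentiating gives $\langle u,\dot{u}\rangle = r\,\dot{r}$, so that $p(0) = \dot{r}(0) = \langle u_0,\dot{u}_0\rangle/r_0$ and hence $v_0 = p(0)\,l_0 = (l_0/r_0)\langle u_0,\dot{u}_0\rangle$, matching the statement.

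Second, I would check that the coordinate $l$ stays strictly positive on $[0,\omega)$, so that \eqref{Enorm} makes sense along the trajectory. By \eqref{Leqn}, $l$ solves $\dot{l} = -\Delta(u,\dot{u})\,l$ with $\Delta \ge 0$, hence
$$ l(t) = l_0 \exp\!\left(-\int_0^t \Delta(u(s),\dot u(s))\,ds\right) > 0 \qquad \text{for all } t\in [0,\omega),$$
given the standing assumption $l_0 = \vert L(u_0,\dot{u}_0)\vert > 0$. Combined with $r(t) > 0$, this makes both $v = p\,l$ and $l^2/r$ well-defined throughout.

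The key step is then to exploit \eqref{Enorm}, which expresses the (squared) eccentricity as a sum of two non-negative terms:
$$ v(t)^2 \;+\; \frac{\bigl(l(t)^2 - r(t)\bigr)^2}{r(t)^2}.$$
This sum tends to $0$ as $t \to \omega^{-}$ if and only if each summand does, that is, $v(t)\to 0$ and $l(t)^2/r(t) \to 1$; since $l^2/r$ is positive, the latter condition is equivalent to $r(t)/l(t)^2 \to 1$. This is exactly the characterization claimed in the lemma. I do not foresee any substantive obstacle: the content is essentially a dictionary between the Cartesian and $(r,v,l)$ descriptions, guaranteed by \eqref{Enorm} together with the non-vanishing of $l$ provided by \eqref{Leqn}.
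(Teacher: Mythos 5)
Your argument is correct and is exactly the route the paper takes: the lemma is stated there as a direct consequence of the coordinate formula \eqref{Enorm} for the (squared) eccentricity, with no further proof given, and your two preliminary checks (that $v_0=\tfrac{l_0}{r_0}\langle u_0,\dot u_0\rangle$ via $\langle u,\dot u\rangle = r\dot r$, and that $l(t)>0$ on $[0,\omega)$ by \eqref{Leqn}) are the right routine verifications to make the dictionary precise. Nothing is missing.
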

{An interesting consequence of this result is that circularization can only occur at collisions.}
{\begin{cor}\corlab{circ1}
    Assume that there is circularization for the solution $u(t)$. Then $r(t)\rightarrow 0$ as $t\rightarrow \omega$.
\end{cor}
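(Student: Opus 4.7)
The plan is to deduce the corollary from \lemmaref{circ} by monotonicity of the angular momentum $l$, combined with a maximality-of-$\omega$ argument to rule out a positive limit.

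First, I would unpack \lemmaref{circ}: circularization at $(u_0,\dot u_0)$ with $L(u_0,\dot u_0)\neq 0$ gives $v(t)\to 0$ and $r(t)/l(t)^2\to 1$ as $t\to \omega^-$. (The case $L(u_0,\dot u_0)=0$ should be disposed of separately: then the motion is radial and collapses to the origin in finite time by the attractive nature of the Kepler force together with the dissipative term.) From the third equation of \eqref{eq:rvl0} we have
\[
\dot l = -\delta r^{-\alpha-\beta}\bigl(l^4+r^2 v^2\bigr)^{\alpha/2} l^{1-\alpha}\le 0 \quad \text{for } r>0,
\]
so $l(t)$ is non-increasing and thus has a limit $l_\infty\in [0,l(0)]$. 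Combined with $r/l^2\to 1$ this forces $r(t)\to l_\infty^2$.

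The crux is to exclude $l_\infty>0$. I would assume $l_\infty>0$ for contradiction. Then $r(t)\to l_\infty^2>0$, so $r$ is bounded and bounded away from $0$ on some subinterval $[t_\ast,\omega)$. The relation $e=v^2+(l^2-r)^2/r^2$ in \eqref{eq:Enorm} combined with $e\to 0$ (circularization) gives $v$ bounded; and $l$ is bounded by monotonicity. I would then split into two cases:

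\textbf{Case $\omega=\infty$.} Plugging the limits into the expression for $\dot l$ yields
\[
\dot l(t)\longrightarrow -\delta\, l_\infty^{\,1-\alpha-2\beta}<0,
\]
contradicting the fact that the monotone, convergent function $l(t)$ must have $\liminf_{t\to\infty}\dot l(t)=0$.

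\textbf{Case $\omega<\infty$.} Since $l_\infty>0$, the denominator $l$ stays bounded away from $0$, so $p=v/l$ remains bounded on $[t_\ast,\omega)$. All three variables $(r,p,l)$ thus lie in a compact subset of $\{r>0\}$, on which the right-hand side of \eqref{eq:rttlt} is locally Lipschitz (no singularity is approached since $r$ and $l$ are bounded below and $\alpha\ge 0$). By standard ODE theory the solution extends beyond $\omega$, contradicting the maximality of $\omega$.

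Either way $l_\infty=0$, hence $r(t)=l(t)^2(1+o(1))\to 0$, as required. The main technical point to get right is the Lipschitz extension in the finite-time case: the term $(l^4+r^2v^2)^{\alpha/2}$ is not smooth at the origin when $\alpha\in(0,2)$, but the assumption $l_\infty>0$ keeps the argument bounded away from zero, so regularity is not an issue. The rest is monotonicity and direct substitution.
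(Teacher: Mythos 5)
Your proposal is correct and follows essentially the same route as the paper: monotonicity of $l$ from the third equation of \eqref{eq:rvl0}, reduction to showing $l_\omega=0$ via the asymptotic equivalence $r\sim l^2$, and a contradiction obtained by passing to the limit in the $\dot l$-equation. The only difference is that you spell out why $l_\omega>0$ forces $\omega=\infty$ (via the compactness/Lipschitz extension argument), a step the paper asserts without detail; your side remark on $L(u_0,\dot u_0)=0$ is unnecessary since $e\equiv 1$ there, so circularization is vacuously excluded.
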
}
{\begin{proof}
    From the third equation in \eqref{rvl0} we see that $l(t)$ is decreasing and so the limit $l_{\omega}:=\lim_{t\rightarrow \omega} l(t) \geq 0$ exists. In view of the asymptotic equivalence of $r(t)$ and $l(t)^2$ it is enough to prove that $l_{\omega}=0$. Otherwise, as $t\rightarrow \omega$,  $$l(t)\to l_{\omega} >0,\; \; v(t)\to 0,\; \; r(t)\to l_{\omega}^2 .$$
    This implies $\omega =+\infty$ and therefore there is a sequence $t_n \to +\infty$ with $\dot{l}(t_n )\to 0$. Again we use the third equation in \eqref{rvl0} to pass to the limit and obtain a contradiction.
\end{proof}}
%Seeing that $l$ is descreasing, it follows that a %necessary condition for circularization is that %$r(t)\rightarrow 0$. 

It therefore follows, that in order to analyze whether circularization occurs, we can study {solutions} of \eqref{rvl0} with $r(t),l(t)\rightarrow 0$. As advertised in the introduction, in order do so using dynamical systems theory, we first use desingularization to ensure that the right hand side of \eqref{rvl0} is well-defined for $r=0$. This frequently leads to degenerate equilibria points (where the linearization only has zero eigenvalues) and in the present case we will overcome this (and gain hyperbolicity/ellipticity) by using blowup, see \cite{dumortier_1996,dumortier2006a, krupa_extending_2001}. Moreover, recently in e.g. \cite{jelbart2021a,uldall2021a,kristiansen2020a} blowup has also been used to gain smoothness and this will also be important to us; \eqref{rvl0} clearly involves roots for general $\alpha,\beta\in \mathbb R$ and the blowup approach will allow us to select weights where smoothness can be gained. The blowup transformations that we use will be case dependent (in particular, they depend upon $\gamma$) and we will describe and motive these in the following section.   

We remind the reader that  \secref{gammaPos}--\secref{completing}, where the main results are proven, can be read independently of the technical details of \secref{blowup}. In particular, in each of these sections we summarize up the relevant transformations of \secref{blowup}.
% The reference \cite{2019arXiv190806781U} also studied regularization by smoothing but considered the planar grazing bifurcation scenario, where a limit cycle of $Z_+$ grazes $\Sigma$ while $Z_-$ remains transverse and points towards $\Sigma$. The results showed, in line with \cite{reves_regularization_2014} and analysis based upon the associated Filippov system \cite{Kuznetsov2003}, that in the case of a repelling limit cycle, the smooth system has a locally unique saddle-node bifurcation of limit cycles. The analysis rested upon a careful description of the local dynamics, through a local transition map $\mathcal P_{loc}:\Pi_{in}\rightarrow \Pi_{out}$, near the grazing point which is given by a visible fold, see illustration in \figref{visible0}. These results, together with \cite{jelbart2021a,jelbart2021c,jelbart2021b,uldall2021a} working on similar systems, were obtained by adapting methods from Geometric Singular Perturbation Theory \cite{fen3,jones_1995}. In particular, these references use a modification of the blowup method \cite{dumortier1996a,krupa_extending_2001} to gain smoothness of systems of the form \eqref{xysm}. 

% }

\section{Blowup transformations}\seclab{blowup}
%Following \lemmaref{circ}, we consider \eqref{rvl0} and are interested in solutions with $r(t),l(t)\rightarrow 0$. 
In order to study solutions of 
\eqref{rvl0} with $r(t),l(t)\rightarrow 0$  using dynamical systems--based techniques, we first multiply the right hand side of \eqref{rvl0} by $r^{\alpha+\beta+3}l^{\alpha+1} $. This leads to the system
\begin{equation}\eqlab{rvl}
\begin{aligned}
 r' &= r^{\alpha+\beta+3}l^{\alpha} v,\\
 v'&=- r^{\alpha+\beta+1}l^{\alpha+2}+ r^{\alpha+\beta} l^{\alpha+4}- 2\delta r^3 \left(l^4 +r^2 v^2\right)^{\frac{\alpha}{2}} lv,\\
 l' &=-\delta r^3 \left(l^4 +r^2 v^2\right)^{\frac{\alpha}{2}} l^2,
\end{aligned}
\end{equation}
where $r=l=0$ is now well-defined. 
% after multiplication of the right hand side by $l^\alpha r^{\alpha+\beta+3}$. 
Notice that the multiplication of the right hand side by $ r^{\alpha+\beta+3}l^{\alpha+1}$ corresponds to a transformation of time defined by 
\begin{align*}
 \frac{d\tau}{dt} = \frac{1}{r^{\alpha+\beta+3}l^{\alpha+1}} >0,
\end{align*}
with $\tau$ denoting the time used in \eqref{rvl}, i.e. $()'=\frac{d}{d\tau}$. We will introduce different reparametrization of times in the following. We will refer to each of these new times as $\tau$; while it is often not crucial for our purposes, the context should provide adequate information about how each of these times are related (implictly) to the original one $t$ used in \eqref{keplerd}. 

% \begin{align*}
%  
% \end{align*}
% 

The advantage of working with \eqref{rvl} is that it  is well-defined on $r=0$ and $l=0$, which enables the use of local methods from dynamical systems theory to extract information about $r>0$ and $l>0$, where \eqref{rttlt}, \eqref{rvl0}, and  \eqref{rvl} are all equivalent.
 However, if $\alpha>0$, then $l=0$ defines a set of equilibria for \eqref{rvl}. By dividing the right hand side of \eqref{rvl} by $l^{\min(\alpha,1)}$, \eqref{rvl} is desingularized within $l=0$:
 \begin{align*}
     &\begin{cases} r' &=r^{\alpha+\beta+3}v,\\ v'&=0,
     \end{cases} \quad \mbox{for $\alpha\in [0,1]$}, 
     \end{align*}
     and \begin{align*}\begin{cases} r' &=0,\\v'&=-2\delta r^{3+\alpha} v^{\alpha+1},\end{cases}\quad \mbox{for $\alpha>1$}.
      \end{align*}
  Here we see that the sets $V$ and $R$ defined by $(0,v,0)$ and $(r,0,0)$ are degenerate sets of equilibria, in the sense that the linearization of \eqref{rvl} around any point in this set only has zero eigenvalues. ({Note that strictly speaking the linearization of \eqref{rvl} around points in $V$ only exists if $\alpha $ and $\alpha +\beta$ are not too small}; we will deal with this later). 
 % \fbox{I removed this sentence because we can linearize for all $\alpha,\bega\ge 0$, I'm not talking about conjugating to the linear part.}
  To analyze \eqref{rvl} near the intersection of these sets, we will therefore follow the blowup procedure in \cite{kristiansen2023revisiting}. 

% When $0\le \beta<2$ then then $p$ is unbounded. We can then introduce $v=\frac{p}{l}$ which leads to the following equations:

% \section{Case $\beta\ge 2$}
% A summary:

% \section{Case $\alpha+2\beta> 3$}
% In this case, we find that $\mathcal E\rightarrow 1^-$ as $r(t)\rightarrow 0$. The analysis is best divided into $\beta\ge 2$ and $0\le \beta<2$. $p$ is only bounded in the former case. In the case $0\le \beta<2$, we can 
In further details, let 
\begin{align*}
 S^1 =\{(\bar r,\bar l)\in \mathbb R^2\,:\,\bar r^2+\bar l^2=1\},
\end{align*}
denote the unit circle. We then first blowup $V$ through a cylindrical blowup  transformation 
{\begin{align*}
\Phi^V\,: \,[0,\infty)\times\mathbb R\times  (S^1\cap \{\bar r\ge 0,\bar l\ge 0\}) &\rightarrow [0,\infty)\times \mathbb R \times [0,\infty),\\
(\rho,v,(\bar r,\bar l))&\mapsto (r,v,l),\end{align*}} defined by 
\begin{align}\eqlab{buV}
\Phi^V(\rho,v,(\bar r,\bar l))=\begin{pmatrix}
  \rho^2 \bar r\\
  v\\
  \rho \bar l                                                                                                                                                               \end{pmatrix}. 
\end{align}
% and finally (c): the $r$ and $l$-components of $\Phi^V(v,\rho,(\bar r,\bar l))$ are given by 
% \begin{align}
%  \rho\ge 0,\,(\bar r,\bar l)\in S^1\mapsto \begin{cases}
%                                             r&=\rho^2 \bar r,\\
%                                             l&=\rho \bar l.
%                                            \end{cases}\eqlab{buV}
% \end{align}
Let $W$ denote the vector-field defined by \eqref{rvl} and recall that 
\begin{align*}
\gamma = \alpha+2\beta-3.
\end{align*}

\subsection{The case $\gamma>0$}\seclab{blowupPos}
In this case, we have the following.
\begin{lemma}
Let $\overline W=(\Phi^V)^*(W)$ denote the pull-back vector-field. Then the desingularized vector-field
$$\widetilde W:=\rho^{-2\alpha-7}\overline W,$$
extends continuously and nontrivially (i.e. $\widetilde W$ does not vanish identically along $\rho=0$) to $(\rho,v,(\bar r,\bar l))\in [0,\infty)\times\mathbb R\times  (S^1\cap \{\bar r\ge 0,\bar l\ge 0\})$.
\end{lemma}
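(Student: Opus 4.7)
The plan is to verify the claim by direct computation, tracking the exponents of $\rho$ that appear when $\Phi^V$ is substituted into each of the three components of $W$. To avoid the cumbersome notation of the constraint $\bar r^2+\bar l^2=1$, I will work in the angular chart $(\bar r,\bar l)=(\cos\phi,\sin\phi)$, $\phi\in[0,\pi/2]$, so that $\Phi^V$ reads $r=\rho^2\cos\phi$, $l=\rho\sin\phi$, $v=v$. The Jacobian of $(\rho,v,\phi)\mapsto(r,v,l)$ has determinant proportional to $\rho^2$ (indeed, a short computation gives $\rho^2(1+\cos^2\phi)$), hence is regular for $\rho>0$ and can be inverted to express $\dot\rho$, $\dot\phi$ as linear combinations of $r'$ and $l'$ divided by $\rho$ and $\rho^2$ respectively.

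Substituting into \eqref{rvl}, every monomial becomes $\rho^N$ times an expression in $\cos\phi,\sin\phi,v$, with the following exponents: the term $r^{\alpha+\beta+3}l^\alpha v$ in $r'$ yields $N=3\alpha+2\beta+6$; the two ``Kepler'' terms $r^{\alpha+\beta+1}l^{\alpha+2}$ and $r^{\alpha+\beta}l^{\alpha+4}$ in $v'$ both yield $N=3\alpha+2\beta+4=2\alpha+7+\gamma$; the damping term $r^3l(l^4+r^2v^2)^{\alpha/2}v$ in $v'$ yields $N=2\alpha+7$; and the damping term in $l'$ yields $N=2\alpha+8$. Note that $(l^4+r^2v^2)^{\alpha/2}=\rho^{2\alpha}(\sin^4\phi+\cos^2\phi\,v^2)^{\alpha/2}$, and the bracket is strictly positive on $[0,\pi/2]\times\mathbb R\setminus\{(0,0)\}$, so for non-integer $\alpha$ this factor is only continuous; this is acceptable because the lemma asks only for continuous extension. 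Inverting the Jacobian, $\dot\rho$ has leading order $\rho^{N}/\rho$ and $\dot\phi$ has leading order $\rho^{N}/\rho^2$, with $N$ ranging over the above values coming from $r'$ and $l'$.

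Since $\gamma>0$, the exponent $2\alpha+7$ is strictly smaller than $3\alpha+2\beta+4=2\alpha+7+\gamma$ and than $3\alpha+2\beta+6$, while $2\alpha+8=2\alpha+7+1$. Dividing through by $\rho^{2\alpha+7}$ therefore yields
\[
\widetilde W:\quad
\dot v=-2\delta\cos^3\phi\,\sin\phi\,v(\sin^4\phi+\cos^2\phi\,v^2)^{\alpha/2}+O(\rho^\gamma),\quad \dot\phi=O(1),\quad \dot\rho=O(\rho),
\]
where every $O(\cdot)$ is a continuous function on $[0,\infty)\times\mathbb R\times[0,\pi/2]$. Thus $\widetilde W$ extends continuously to $\rho=0$, and the boundary vector field is nonzero, e.g.\ on the open subset where $v\ne 0$ and $\phi\in(0,\pi/2)$, since there the $\dot v$ component above is strictly negative. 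This gives nontriviality.

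The main obstacle is a bookkeeping one rather than a conceptual one: one must be careful that after inverting the regular but $\rho$-dependent Jacobian, no hidden negative powers of $\rho$ appear in $\dot\rho$ or $\dot\phi$. This is what forces the precise exponent $2\alpha+7$ rather than the naive smallest exponent in the original equations; a quick check confirms that the worst case is the damping term in $v'$, which sets the scaling. The argument in the angular chart covers the interior of $S^1\cap\{\bar r\ge 0,\bar l\ge 0\}$; the corners $\phi=0$ and $\phi=\pi/2$ correspond to the remaining degenerate sets $V$ and $R$ and are handled by the subsequent blowup $\Phi^P$ referred to in the introduction.
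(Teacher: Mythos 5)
Your proof is correct and follows essentially the same route as the paper's: substitute the blowup, track the power of $\rho$ contributed by each term, observe that for $\gamma>0$ the minimal exponent $2\alpha+7$ comes from the damping terms, and divide --- the only difference being that you work in a single angular chart while the paper computes in the directional chart $\bar r=1$ (obtaining its system \eqref{V2eqns}) and defers the $\bar l=1$ chart to the reader. One small correction: your exponent count and the continuity of all coefficients in fact hold on the whole closed interval $\phi\in[0,\pi/2]$, exactly as your displayed formula already asserts, so the closing remark that the endpoints $\phi=0,\pi/2$ must be deferred to a later blowup is both unnecessary and, if it were actually needed, would leave the lemma (which claims continuity on all of $S^1\cap\{\bar r\ge 0,\bar l\ge 0\}$) unproven at those points.
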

\begin{proof}
% has $\rho^{2\alpha+7}$ as a common factor. We therefore extend  to $\rho=0$ and study this in the following for $\gamma>0$. 
% We gain hyperbolicity in the $\bar l=1$-chart, see \eqref{buVl1} below, but 
% Here we consider the $\bar l=1$-chart with chart-specific coordinates $(r_1,\rho_1)$ defined by
% \begin{equation}\eqlab{buVl1}
% \begin{aligned}
%  r &=\rho_1^2 r_1,\\
%  l&=\rho_1.
% \end{aligned}
%  \end{equation}
% Then we obtain
% \begin{equation}\eqlab{V1eqns}
% \begin{aligned}
%  r_1' &=r_1^{\alpha+\beta} \rho_1^{\alpha+2\beta-3} v+2 \delta\left(1+r_1^2v^2\right)^{\frac{\alpha}{2}} r_1,\\
%  v'&=-r_1^{\alpha+\beta-2}\rho_1^{\alpha+2\beta-3} +r_1^{\alpha+\beta-3} \rho_2^{\alpha+2\beta-3} - 2\delta\left(1+r_1^2 v^2\right)^{\frac{\alpha}{2}} v,\\
%  \rho_1'&=-\delta\left(1+r_1^2 v^2\right)^{\frac{\alpha}{2}} \rho_1,
% \end{aligned}
% \end{equation}
% after dividing the right hand side by $\rho_1^{2\alpha}$.
We consider the $\bar r=1$-chart with chart-specific coordinates $(\rho_2,v,l_2)$, defined by
\begin{align*}
\Xi_2^V:\quad (\rho,v,(\bar r,\bar l)) \mapsto \begin{cases}
    \rho_2 &=\rho \sqrt{\bar r},\\
    v &= v,\\
    l_2 &= \frac{\bar l}{\sqrt{\bar r}},
 \end{cases}
\end{align*}
and obtain a local form of $\Phi^V$:
\begin{equation}\eqlab{Phi2V}
\begin{aligned}
\Phi_2^V:\quad (\rho_2,v,l_2)\mapsto \begin{cases}
 r &=\rho_2^2,\\
 v &= v,\\
 l&=\rho_2l_2,
 \end{cases}
\end{aligned}
\end{equation}
i.e. $\Phi^V = \Phi_2^V\circ \Xi_2^V$. This leads to the following local version $(\Phi_2^V)^* (W)$ of $\overline W$:
\begin{equation}\eqlab{V2eqns}
\begin{aligned}
 \rho_2' &=\rho_2^{2\alpha+7}  \left(\frac12 \rho_2^{\gamma+1} l_2^{\alpha} v\right),\\
 v'&=\rho_2^{2\alpha+7}  \left(-\rho_2^{\gamma}l_2^{\alpha+2}+ \rho_2^{\gamma} l_2^{\alpha+4}- 2\delta \left(l_2^4+v^2\right)^{\frac{\alpha}{2}}l_2 v\right),\\
 l_2'&=-\rho_2^{2\alpha+7} \left(\frac12 \rho_2^{\gamma} l_2^{\alpha-1} v+\delta \left(l_2^4+v^2\right)^{\frac{\alpha}{2}}\right)l_2^2.
\end{aligned}
\end{equation}
We see that $\rho_2^{2\alpha+7}$ is a common factor. By dividing the right hand side by this quantity, we obtain  -- using $\rho_2=\rho \sqrt{\bar r}$ -- a vector-field which is equivalent to the local version $\widetilde W_2=\rho^{-2\alpha-7}(\Phi_2^V)^* (W)$ of $\widetilde W$, as desired.  
% From \eqref{V2eqns}, we realize that the sign of quantity
% \begin{align*}
%  \gamma:=\alpha+2\beta-3,
% \end{align*}
% plays an important role. 
Since $\gamma$ is assumed positive, $\rho_2=0$ defines an invariant set for \eqref{V2eqns} upon which we have
% When $\gamma>0$, then \eqref{V2eqns} becomes 
\begin{equation}\eqlab{this}
\begin{aligned}
  v'&=- 2\delta \left(l_2^4+v^2\right)^{\frac{\alpha}{2}}l_2 v,\\
 l_2'&=-\delta \left(l_2^4+v^2\right)^{\frac{\alpha}{2}}l_2^{2}.
\end{aligned}
\end{equation}
To complete the proof, we perform a similar calculation in the $\bar l=1$-chart (see \eqref{Phi1V} below). Notice that the $\bar r=1$- and $\bar l=1$- charts provide an atlas. We leave out further details.
\end{proof}
Notice that
$(v,l_2)=(0,0)$ is attracting for \eqref{this} on $l_2>0$; in fact the vector-field is equivalent to $v'=-2v$, $l_2'=-l_2$. However, the equilibrium is degenerate for \eqref{this} (all eigenvalues of the linearization are zero) and therefore we cannot apply local dynamical systems technique to describe the dynamics of
\eqref{V2eqns} in a neighborhood of the origin. In fact, the set $R_2$ defined by $(\rho_2,0,0)$ is a set of degenerate equilibria for {the rescaled version of} \eqref{V2eqns}, with the linearization around any point in $R_2$ having only zero eigenvalues.  This degeneracy stems from the degeneracy of the set $R:\,(r,0,0)$ of \eqref{rvl}. We therefore proceed to blow up $R_2$.

\subsubsection{Blowing up $R_2$}
To blow up $R_2$, we use a cylindrical blowup transformation:{\begin{align*}
\Phi^{R_2}\,:\,[0,\infty)\times [0,\infty)\times  (S^1\cap \{\bar l_2\ge 0\})&\rightarrow [0,\infty)\times \mathbb R\times [0,\infty),\\
(y,\mu,(\bar v,\bar l_2))&\mapsto (\rho_2,v,l_2),
\end{align*}
} defined by 
\begin{align}\eqlab{buP2}
\Phi^{R_2}(y,\mu,(\bar v,\bar l_2))=
\begin{pmatrix} 
 y^{\frac{1}{\gamma}}\\                                                                       
                       \mu \bar v\\
                       \mu \bar l_2\end{pmatrix}
.\end{align}
% and (b): the $v$ and $l_2$-components of $\Phi^{R_2}(y,\mu,(\bar v,\bar l_2))$ are given by 
% % \begin{align}
% %  \rho\ge 0,\,(\bar r,\bar l)\in S^1\mapsto \begin{cases}
%                                             r&=\rho^2 \bar r,\\
%                                             l&=\rho \bar l.
%                                            \end{cases}\eqlab{buV}
% \end{align}
% 
% \begin{align}\eqlab{buP2}
%  \mu\ge 0,\,(\bar v,\bar l_2)\in S^1 \mapsto \begin{cases}
%                                        v &=\mu \bar v,\\
%                                        l_2 &=\mu \bar l_2.
%                                       \end{cases}
% \end{align}
The points in $S^1$ are now of the type $(\bar v,\bar{l}_2 )$.
The transformation of the $\rho_2$ component is introduced to gain smoothness of $\rho_2=0$, see \eqref{yv1mu1}.
\begin{lemma}
 Let $\overline{\widetilde W}_2=(\Phi^{R_2})^*(\widetilde W_2)$ with $\widetilde W_2$ given by \eqref{V2eqns}. Then the desingularized vector-field
$$\widetilde{\widetilde W}_2:=\mu^{-\alpha-1}\overline{\widetilde W}_2,$$
extends continuously and nontrivially (i.e. $\widetilde{\widetilde W}_2$ does not vanish identically along $\mu=0$) to $(y,\mu,(\bar v,\bar l_2))\in [0,\infty)\times[0,\infty)\times  (S^1\cap \{\bar l_2\ge 0\})$. 
\end{lemma}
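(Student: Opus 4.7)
My plan is to verify the claim by a direct chart-wise calculation, exactly as in the proof of the previous lemma. First I would cover the half-circle $S^1\cap\{\bar l_2\ge 0\}$ by two directional charts: the $\bar l_2=1$ chart with coordinates $(y,\mu,v_1)$, $v=\mu v_1$, $l_2=\mu$, and the two $\bar v=\pm 1$ charts with coordinates $(y,\mu,l_1)$, $v=\pm\mu$, $l_2=\mu l_1$. In all three, $\rho_2$ is still given by $\rho_2=y^{1/\gamma}$. These charts provide an atlas, so it suffices to check the claim in each of them.

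Next, I would compute the pullback of (the desingularized version of) \eqref{V2eqns} chart by chart. The $\rho_2$-equation is transformed via $\rho_2'=\gamma^{-1}y^{1/\gamma-1}y'$, and the key cancellation is that $\rho_2^{\gamma+1}=y\cdot y^{1/\gamma}$, which exactly cancels $y^{1/\gamma-1}$ to leave a polynomial expression in $y$ (specifically $y'\propto y^2\mu^{\alpha+1}(\cdots)$). For the $v$- and $l_2$-equations one substitutes and uses the product rule, e.g. $\mu v_1'=v'-\mu'v_1$ in the first chart and $\mu l_1'=l_2'-\mu'l_1$ in the other two. In the $\bar l_2=1$ chart the factor $(l_2^4+v^2)^{\alpha/2}$ becomes $\mu^{\alpha}(\mu^2+v_1^2)^{\alpha/2}$, and in the $\bar v=\pm 1$ charts it becomes $\mu^{\alpha}(1+\mu^2 l_1^4)^{\alpha/2}$. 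In both cases an extra $\mu^\alpha$ is liberated from the square root.

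Collecting powers of $\mu$, one sees that every right-hand side contains $\mu^{\alpha+1}$ as a common factor; dividing by $\mu^{\alpha+1}$ — which is precisely the desingularization $\widetilde{\widetilde W}_2=\mu^{-\alpha-1}\overline{\widetilde W}_2$ — yields in the $\bar l_2=1$ chart
\begin{align*}
y'&=\tfrac{\gamma}{2}\,y^2 v_1,\\
\mu'&=-\mu\bigl(\tfrac12 y v_1+\delta(\mu^2+v_1^2)^{\alpha/2}\bigr),\\
v_1'&=-y+y\mu^2+\tfrac12 y v_1^2-\delta(\mu^2+v_1^2)^{\alpha/2}v_1,
\end{align*}
and an analogous polynomial-plus-$(1+\mu^2 l_1^4)^{\alpha/2}$ system in the $\bar v=\pm 1$ charts. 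Each right-hand side is continuous on the closed half-space $\mu\ge 0$: the $(1+\mu^2 l_1^4)^{\alpha/2}$ terms are in fact real-analytic at $\mu=0$, and $(\mu^2+v_1^2)^{\alpha/2}$ is continuous on all of $\mathbb R^2$ for $\alpha\ge 0$. Non-triviality is read off by restricting to $\mu=0$: the $v_1$-equation in the first chart reduces to $v_1'=-y+\tfrac12 y v_1^2-\delta|v_1|^\alpha v_1$, which is manifestly not identically zero.

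The only mild subtlety — and the place where one must be slightly careful — is that $\rho_2=y^{1/\gamma}$ is non-smooth at $y=0$ when $1/\gamma\notin \mathbb N$, so one must verify that every occurrence of a non-integer power of $y$ is annihilated by the chain-rule factor $y^{1-1/\gamma}$; the computation above shows this happens automatically because all powers of $\rho_2$ appearing in \eqref{V2eqns} are integer multiples of $\gamma$ (namely $\rho_2^\gamma$ and $\rho_2^{\gamma+1}$, which become $y$ and $y\cdot y^{1/\gamma}$ respectively). One should also remark that the calculation in the $\bar v=\pm 1$ charts takes the same form up to signs inherited from $v=\pm\mu$, so treating $\sigma=\pm 1$ symbolically covers both simultaneously. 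Beyond this bookkeeping, the proof is a routine but careful exponent-counting exercise, so I expect no genuine obstruction.
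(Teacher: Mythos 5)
Your proposal is correct and follows essentially the same route as the paper: substitute the chart formulas into \eqref{V2eqns}, observe that $\rho_2^{\gamma}=y$ and $\rho_2^{\gamma+1}=y\cdot y^{1/\gamma}$ cancel the chain-rule factor $y^{1/\gamma-1}$, extract the common factor $\mu^{\alpha+1}$ (with $\mu^{\alpha}$ liberated from $(l_2^4+v^2)^{\alpha/2}$), and read off continuity and non-triviality at $\mu=0$; your system in the $\bar l_2=1$ chart agrees exactly with \eqref{yv1mu1}. You are in fact slightly more thorough than the paper, which only carries out the computation in the $\bar l_2=1$ chart, whereas you also verify the $\bar v=\pm1$ charts needed to cover the endpoints of the half-circle.
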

% % 
% % By pull-back of \eqref{} we obtain a vector-field that can be desingularized through division of the right hand side by $\mu^{\alpha+1}$.
% \begin{proof}
   
%   \end{proof}
\begin{proof}
We will only consider the $\bar l_2=1$-chart with chart-specific coordinates $(y,v_1,\mu_1)$ so that $\Phi^{R_2}$ takes the local form
\begin{equation}\eqlab{mu2l22}
\Phi_1^{R_2}:\,(y,v_1,\mu_1)\mapsto \begin{cases}
\rho_2 &=y^{\frac{1}{\gamma}},\\
 v &= \mu_1v_1,\\
 l_2 &=\mu_1.
\end{cases}
\end{equation}
% At the same time, we define
% \begin{align*}
%  y = \rho_2^{\gamma}.
% \end{align*}
% In this way, we gain smoothness at $y=0$ (corresponding to $\rho_2=0$ and $(r,l)=(0,0)$ by \eqref{buVr1}). 
This leads to the vector-field $\mu_1^{-\alpha-1} (\Phi_1^{R_2})^*(\widetilde W_2)$:
% \begin{align*}
%  y' &= \gamma \rho_2^{\gamma-1}\rho_2' =\frac{\gamma}{2}\rho_2^{2\gamma} l_2^{\alpha-1}v = \frac{\gmama}{2}y^2 l_2^{\alpha-1}v.
% \end{align*}
\begin{equation}\eqlab{yv1mu1}
\begin{aligned}
 y' &=\frac{\gamma}{2}y^2 v_1,\\
 v_1'&=-y+\mu_1^2 y+\frac12 v_1^2 y -\delta\left(\mu_1^2+v_1^2\right)^{\frac{\alpha}{2}} v_1,\\
 \mu_1'&=-\left(\frac12 y v_1+\delta\left(\mu_1^2+v_1^2\right)^{\frac{\alpha}{2}} \right)\mu_1,
 \end{aligned}
 \end{equation}
  which is equivalent to the local version $\widetilde{\widetilde W}_{21}=\mu^{-\alpha-1}(\Phi_1^{R_2})^*(\widetilde W_2)$. 
%  after division of the right hand side by $\mu_1^{\alpha+1}$. 
%  (In this way, we also gain smoothness of $y=0$ (corresponding to $\rho_1=0$). )
\end{proof}
$y=0$ defines an invariant set of \eqref{yv1mu1} and within this set, we have 
\begin{align*}
 v_1' &=-\delta\left(\mu_1^2+v_1^2\right)^{\frac{\alpha}{2}} v_1,\\
 \mu_1'&=-\delta\left(\mu_1^2+v_1^2\right)^{\frac{\alpha}{2}}\mu_1.
\end{align*}
Here $(v_1,\mu_1)=(0,0)$ is attracting, but still degenerate (and {possibly not very smooth} at $\mu_1=v=0$) for $\alpha>0$. 
% On the other hand, within $\mu_1=0$ we have 
% \begin{align*}
%   y' &=\frac{\gamma}{2}y^2 v_1,\\
%  v_1'&=-y+\frac12 v_1^2 y -\delta\vert v_1\vert^{\alpha} v_1.
% \end{align*}
% Here $(y,v_1)=(0,0)$ is also degenerate for $\alpha=0$. For $\alpha=0$, $(y,v_1)=(0,0)$ is partially hyperbolic with $-\delta$ as a nonzero eigenvalue. By a center manifold reduction, we find that $(y,v_1)=(0,0)$ is an attractor in this case. 

For $\alpha>0$, we therefore apply the following spherical blowup transformation of $(y,v_1,\mu_1)=(0,0,0)$:
\begin{align}\eqlab{this2}
 q\ge 0,(\bar y,\bar v_1,\bar \mu_1)\in S^2\mapsto \begin{cases}
                                                         y &=q^{\alpha+1}\bar y,\\
                                                         v_1 &=q \bar v_1,\\
                                                        \mu_1 &=q\bar \mu_1,
                                                       \end{cases}
\end{align}
where $S^2=\{(\bar y,\bar v_1,\bar \mu_1)\in \mathbb R^3\,:\,\bar y^2+\bar v_1^2+\bar \mu_1^2=1\}$. 

We now proceed in a more ad-hoc manner. 
Notice that the weights $(\alpha+1,1,1)$ are so that $\mu_1$ and $v_1$, as well as 
\begin{align*}
 -y\quad \text{and}\quad -{\delta} \left(\mu_1^2+v_1^2\right)^{\frac{\alpha}{2}} v_1,
\end{align*}
in the equation for $v_1$,
have the same order with respect to $q$. %We achieve the desingularization through division by $q^{\alpha+1}$. 

It is enough for our purposes to focus on the $\bar y=1$-chart with chart-specific coordinates $(q_1,v_{11},\mu_{11})$ so that \eqref{this2} takes the local form
\begin{equation}\eqlab{final}
\begin{aligned}
 y &= q_1^{\alpha+1},\\
 v_1&=q_1 v_{11},\\
 \mu_1 &=q_1\mu_{11}.
\end{aligned}
\end{equation}
Inserting this into \eqref{yv1mu1}, we obtain
\begin{equation}\eqlab{beta1V2P1eqnsbu}
\begin{aligned}
 q_1' &=\frac{\gamma}{2(\alpha+1)}q_1^{3} v_{11},\\
 v_{11}'&=-1+q_1^{2} \left(\mu_{11}^2+\frac{2-\beta}{\alpha+1}v_{11}^2\right)-\delta \left(\mu_{11}^2+v_{11}^2\right)^{\frac{\alpha}{2}} v_{11},\\
 \mu_{11}' &=-\left(\frac{\alpha+\beta-1}{\alpha+1} q_1^{2} v_{11}+\delta \left(\mu_{11}^2+v_{11}^2\right)^{\frac{\alpha}{2}}\right)\mu_{11},
\end{aligned}
\end{equation}
after division of the right hand side by $q_1^{\alpha}$. 

Now, notice that $q_1=0$ defines an invariant set for \eqref{beta1V2P1eqnsbu} and along this set we have
\begin{equation}\eqlab{beta1V2P1eqnsbumu10}
\begin{aligned}
 v_{11}' &=-1-\delta \left(\mu_{11}^2+v_{11}^2\right)^{\frac{\alpha}{2}}v_{11},\\
 \mu_{11}'&=-\delta \left(\mu_{11}^2+v_{11}^2\right)^{\frac{\alpha}{2}}\mu_{11}.
\end{aligned}
\end{equation}
% The following quantity will also be important
Define
\begin{align}
v_{11,*}:=-{\delta^{-\frac{1}{\alpha+1}}}. \eqlab{pstar}
\end{align}
Then a simple calculation shows that $(v_{11,*},0)$ is a hyperbolic equilibrium for \eqref{beta1V2P1eqnsbumu10}; in particular, \eqref{beta1V2P1eqnsbumu10} is smooth in a neighborhood of this point. We have therefore gained hyperbolicity through blowup. We study the dynamics in details in \secref{gammaPos}.

\subsection{The case $\gamma\le 0$}\seclab{blowupNeg}
We now turn to $\gamma\le 0$ and the use of the blowup $\Phi^V$ in this case. It is straightforward to show that $\rho^{\gamma+2\alpha+7}=\rho^{3\alpha+2\beta+4}$ is a common factor of $\overline W=(\Phi^V)^*(W)$. We therefore study $\widetilde W:=\rho^{-3\alpha-2\beta-4} \overline W$ for $\gamma\le 0$.

However, in the case $\gamma\le 0$ it will be more useful to consider a separate chart $\bar l=1$, instead of $\bar r=1$, see \eqref{Phi2V}, with chart-specific coordinates $(\rho_1,r_1)$ defined by
% In this chart, we consider the chart-specific coordinates $(r_1,\rho_1)$ defined by 
% \begin{align*}
%  r &=\rho_1^2 r_1,\\
%  l &=\rho_1.
% \end{align*}
% We can then write \eqref{V2eqns} in the $(r_1,v,\rho_1)$-coordinates. This gives
\begin{align*}
\Xi_1^V:\quad (\rho,(\bar r,\bar l)) \mapsto \begin{cases}
  \rho_1 &=\rho \bar l,\\
  r_1 &= \bar l^{-2}{\bar r}.
 \end{cases}
\end{align*}
These gives the local form of $\Phi^V$:
\begin{equation}\eqlab{Phi1V}
\begin{aligned}
\Phi_1^V:\quad (\rho_1,r_1)\mapsto \begin{cases}
 r &=\rho_1^2 r_1,\\
 l&=\rho_1,
 \end{cases}
\end{aligned}
\end{equation}
i.e. $\Phi^V = \Phi_1^V\circ \Xi_1^V$,
and the following local system %$\rho_1^{-3\alpha-2\beta-4}(\Phi_1^V)^* (W)$:
\begin{equation}\eqlab{V1eqnsgammaNegative0}
\begin{aligned}
 r_1 '& = v + 2\delta \rho_1^{-\gamma} r_1^{-\alpha-\beta} \left(1+r_1^2v^2\right)^{\frac{\alpha}{2}}r_1,\\
 v'&=-\frac{r_1-1}{r_1^3}-2\delta \rho_1^{- \gamma} r_1^{-\alpha-\beta} \left(1+r_1^2v^2\right)^{\frac{\alpha}{2}}v,\\
 \rho_1'&=- \delta r_1^{-\alpha-\beta} \left(1+r_1^2v^2\right)^{\frac{\alpha}{2}}\rho_1^{- \gamma+1}.
\end{aligned}
\end{equation}
Here we have divided the right hand side of $(\Phi_1^V)^* (W)$ by  $\rho_1^{3\alpha+2\beta+4} r_1^{{\alpha +\beta +3}}$ to simplify the equations further.

The following equations 
\begin{align}
 r_1 =\frac{1}{l_2^2},\quad \rho_1 = \rho_2 l_2,\eqlab{cc12}
\end{align}
define a smooth change coordinates between the two charts $\bar l=1$ and $\bar r=1$ for $l_2>0$.

For $\gamma<0$, in order to gain smoothness at $\rho_1=0$, we also transform $\rho_1$ in the following way:
% In order to gain smoothness, we define
\begin{align}
 x = \rho_1^{-\gamma}.
\end{align}
Then we finally obtain
\begin{equation}\eqlab{V1eqnsgammaNegative}
\begin{aligned}
 r_1 '& = v + 2\delta x r_1^{-\alpha-\beta} \left(1+r_1^2v^2\right)^{\frac{\alpha}{2}}r_1,\\
 v'&=-\frac{r_1-1}{r_1^3}-2\delta x r_1^{-\alpha-\beta} \left(1+r_1^2v^2\right)^{\frac{\alpha}{2}}v,\\
 x'&= \gamma \delta  r_1^{-\alpha-\beta} \left(1+r_1^2v^2\right)^{\frac{\alpha}{2}}x^2.
\end{aligned}
\end{equation}
An astonishing characteristic of this system is that \textit{it is analytic}, even along $x=0$ (corresponding to $\rho_1=l=0$) for all $\gamma<0$. 
% \begin{align*}
%  
% \end{align*}

% \begin{equation}\eqlab{buVr1}
% \begin{aligned}
%  r &=\rho_1^2r_1,\\
%  l&=\rho_1.
% \end{aligned}
% \end{equation}
%. In order to prove \thmref{main}, we have to show that circularization occurs if and only if $\gamma<0$.

% For $\gamma>0$, we will use 

% % \section{Blowup transformations}
% 
% We study $\gamma>0$, $\gamma<0$ and finally ``the critical case'' $\gamma=0$ in the following sections. 

% \subsection{$\gamma>0$}

% \begin{align*}
%  q' &= \frac{\gamma}{2\alpha}q_1^3 v_{11},\\
%  v_{11}'&=-
% \end{align*}

In the following sections, we use the coordinates obtained through the blowup process to study $\gamma>0$, $\gamma<0$ and finally $\gamma=0$.
\section{{Absence} of circularization for $\gamma>0$}\seclab{gammaPos}
In this section, we will prove the following
\begin{proposition}\proplab{gammaPos}
 Suppose that  $\gamma=\alpha+2\beta-3> 0$. Then the following claims hold true:
 %$r(t)\rightarrow 0$ for $t\rightarrow \omega$. Then %$e(t) \rightarrow 1$.
 {\begin{enumerate}
     \item There is no circularization for any initial condition $(u_0 ,\dot{u}_0 )\in \mathcal P$
     \item There exists an open and non-empty set $\mathcal{U}\subset \mathcal P$ such that for each $(u_0 ,\dot{u}_0 )\in \mathcal{U}$ and $t\to \omega$,
     $$\mathcal{E}(t)\to \mathcal{E}_{\omega}, \; \; |\mathcal{E}_{\omega} |=1.$$
     Here $\mathcal{E}(t)$ is the eccentricity vector corresponding to $u(t;u_0 ,\dot{u}_0 )$ and $\mathcal{E}_{\omega}$ is some vector on the unit circle.
 \end{enumerate}}
\end{proposition}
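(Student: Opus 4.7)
The plan is to run the analysis inside the blowup coordinates from \secref{blowupPos}: i.e.\ in the $\bar r=1$--chart of $\Phi^V$, followed by $\Phi^{R_2}$ and (when $\alpha>0$) the spherical blowup \eqref{this2}, which produces the analytic desingularized system \eqref{beta1V2P1eqnsbu} with the distinguished equilibrium $P_*:=(q_1,v_{11},\mu_{11})=(0,v_{11,*},0)$, $v_{11,*}=-\delta^{-1/(\alpha+1)}$. Both conclusions are then read off from the local dynamics near $P_*$, supplemented by a short argument carried out in the complementary $\bar l=1$--chart. The case $\alpha=0$ is entirely analogous, except that one stops at \eqref{yv1mu1} (where the distinguished equilibrium is the hyperbolic sink of the linear system on $y=0$) and the spherical blowup step is not needed.

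\emph{Part (1).} By \lemmaref{circ} and \corref{circ1}, any circularizing trajectory satisfies $r(t)\to 0$, $v(t)\to 0$ and $r(t)/l(t)^2\to 1$ as $t\to\omega$; in particular $l(t)\to 0$, so for $t$ sufficiently close to $\omega$ the trajectory lies in the $\bar l=1$--chart of $\Phi^V$ with $(r_1,v,\rho_1)(t)\to(1,0,0)$. I would first regularize the offending factor $\rho_1^{-\gamma}$ in \eqref{V1eqnsgammaNegative0} by the additional time rescaling $d\tau_2/d\tau=\rho_1^{\gamma}>0$ (well--defined since $\gamma>0$ and $\rho_1>0$ along the trajectory). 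A direct substitution shows that, in the resulting smooth vector field, the point $(r_1,v,\rho_1)=(1,0,0)$ has non--vanishing velocity $dr_1/d\tau_2=2\delta>0$; in particular it is not an equilibrium. Hence no forward trajectory of the desingularized flow can converge to it: either $\tau_2$ reaches a finite value with $(r_1,v,\rho_1)=(1,0,0)$, in which case the solution admits a smooth continuation across, contradicting the definition of $\omega$; or $\tau_2\to\infty$ along the trajectory, which is incompatible with $dr_1/d\tau_2$ being bounded away from zero at the putative limit.

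\emph{Part (2).} Linearizing \eqref{beta1V2P1eqnsbu} at $P_*$ yields eigenvalues $-(\alpha+1)\delta^{1/(\alpha+1)}$ and $-\delta^{1/(\alpha+1)}$ along $v_{11}$ and $\mu_{11}$ respectively (in agreement with the hyperbolicity of $(v_{11,*},0)$ for \eqref{beta1V2P1eqnsbumu10} noted in \secref{blowupPos}) together with eigenvalue $0$ along $q_1$. The zero direction is handled by a one--dimensional center manifold $W^c$ tangent to the $q_1$--axis; invariance forces the expansion $v_{11}=v_{11,*}+O(q_1^2)$, $\mu_{11}=O(q_1^2)$ on $W^c$, and plugging this into the $q_1$--equation of \eqref{beta1V2P1eqnsbu} gives the reduced flow
\begin{equation*}
q_1' = \frac{\gamma\, v_{11,*}}{2(\alpha+1)}\, q_1^3 + O(q_1^5),
\end{equation*}
whose leading coefficient is strictly negative since $\gamma>0$ and $v_{11,*}<0$. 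Thus $W^c$ is attracting on $q_1>0$ and $P_*$ is asymptotically stable with an open three--dimensional stable set $\mathcal N$. Pulling $\mathcal N$ back through $\Phi^{R_2}$, $\Phi^V$ and \eqref{ueqn} produces the non--empty open set $\mathcal U\subset \mathcal P$. On orbits in $\mathcal U$ one has $\mu_{11}\to 0$, so $l_2=q_1\mu_{11}\to 0$ and $v=q_1^2\mu_{11}v_{11}\to 0$; since $l^2/r=l_2^2$ in the $\bar r=1$--chart, \eqref{Enorm} gives $e(t)=v(t)^2+(1-l_2(t)^2)^2\to 1$, which in particular excludes circularization. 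For the limit of the \emph{vector} $\mathcal E(t)$ I would combine (i) the exponential decay of $(v_{11},\mu_{11})-(v_{11,*},0)$ with the algebraic decay of $q_1$ along $W^c$ to show $\int_0^\omega \dot\theta\,dt<\infty$ (giving finitely many rotations and a finite limit $\theta_\omega$ of the argument of $u$), and (ii) the bound $|\dot u\wedge L|\le |\dot u|\,l(t)\to 0$, furnished by the same asymptotics. Writing $\mathcal E(t)=\dot u(t)\wedge L(t)-u(t)/|u(t)|$, one concludes $\mathcal E(t)\to \mathcal E_\omega:=-\lim_{t\to\omega} u(t)/|u(t)|$, which lies on $S^1$.

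\emph{Main obstacle.} The delicate step is the center--manifold reduction in part (2): extracting the sign of the cubic term (which is exactly where the hypothesis $\gamma>0$ enters decisively) and then converting the merely algebraic decay of $q_1$ into genuine integrability of $\dot\theta=l/r^2$, so that the argument $\theta(t)$ has a finite limit and $\mathcal E(t)$ inherits a limit on $S^1$. Part (1) and the scalar statement $e(t)\to 1$ in part (2) are comparatively routine once the blowup picture and the spectral data at $P_*$ are in place.
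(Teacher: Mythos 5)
Your proposal is correct, and Part (2) follows the paper's argument essentially verbatim: the same zero eigenvalue at $(0,v_{11,*},0)$ (resp.\ the origin of \eqref{yv1mu10} when $\alpha=0$), the same center-manifold reduction with cubic leading term $\frac{\gamma}{2(\alpha+1)}v_{11,*}q_1^3<0$, the same construction of $\mathcal U$ from the open basin of attraction, and the same two ingredients ($\int \mu_{11}q_1^2\,d\tau<\infty$ and $|\dot u\wedge L|\to 0$) for the limit of $\mathcal E(t)$. Part (1) is where you genuinely diverge: the paper argues chart by chart, showing for $\alpha=0$ that circularization would force convergence of \eqref{yv1mu10} to the non-equilibrium $(0,0,1)$, and for $\alpha>0$ deriving from the $\mu_{11}$-equation of \eqref{beta1V2P1eqnsbu} a differential inequality incompatible with $\mu_{11}\to\infty$; you instead work uniformly in the $\bar l=1$ chart, multiply \eqref{V1eqnsgammaNegative0} by $\rho_1^{\gamma}$ (note your stated reparametrization $d\tau_2/d\tau=\rho_1^{\gamma}$ has the exponent sign reversed relative to this, though your subsequent computation makes the intent unambiguous), and observe that $(1,0,0)$ is not an equilibrium of the resulting continuous field, with $\{\rho_1=0\}$ unreachable in finite rescaled time because $\rho_1'=-\delta U\rho_1$. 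Your version has the advantage of treating $\alpha=0$ and $\alpha>0$ at once and of being the exact mirror of the paper's own $\gamma<0$ setup; the paper's version avoids the Hölder-only regularity of $\rho_1^{\gamma}$ for $0<\gamma<1$ (harmless here, since only continuity and the linear bound on $\rho_1'$ are used), and its differential inequality for $\alpha>0$ yields slightly more quantitative information about how $\mu_{11}$ fails to diverge.
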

% \begin{remark}
%     {It is our expectation that the set $\mathcal U$ coincides with the collision set where $r(t)\rightarrow 0$, but we have not found an easy way to show this. We therefore leave this as a topic for future work.}
% \end{remark}

Following \secref{blowup}, specifically \secref{blowupPos},  we divide the analysis into two parts: $\alpha=0$ and $\alpha>0$. 
\subsection{The case $\alpha=0$}\seclab{gammaPosAlpha0}
We consider \eqref{rttlt} and use a chain of changes of variables that can be summed up by the formulas:
\begin{align}\eqlab{trans}
 r= y^{\frac{2}{\gamma}},\quad p = y^{-\frac{1}{\gamma}} v_1,\quad l = y^{\frac{1}{\gamma}}\mu_1,\quad \frac{d\tau}{dt}=y^{-\frac{2\beta}{\gamma}},
\end{align}
see \eqref{veqn}, \eqref{Phi2V} and \eqref{mu2l22}.
This gives the equations \eqref{yv1mu1}$_{\alpha=0}$, repeated here for convenience
\begin{equation}\eqlab{yv1mu10}
\begin{aligned}
 y' &=\frac{\gamma}{2}y^2 v_1,\\
 v_1'&=-y+\mu_1^2 y+\frac12 v_1^2 y -\delta v_1,\\
 \mu_1'&=-\left(\frac12 yv_1 +\delta\right)\mu_1,
 \end{aligned}
 \end{equation}
with $()'=\frac{d}{d\tau}$. In these coordinates, we have
\begin{align}
 e = 1-2\mu_1^2\left(1-\frac12 (\mu_1^2+v_1^2)\right).\eqlab{ehere}
 \end{align}
We are now ready to prove the first claim of \propref{gammaPos}  in the case $\alpha =0$. Assume by contradiction that there is circularization for some solution. From \lemmaref{circ} and \corref{circ1}, for $t \to \omega$,
$$r(t)\to 0,\; \; \; \frac{r(t)}{l(t)^2}\to 1,\; \; \; e(t)\to 0.$$ The corresponding solution of \eqref{yv1mu10} will be defined on some interval $[0,T)$ and we know from
\eqref{trans} and \eqref{ehere} that, for $\tau \to T$,
$$y(\tau )\to 0,\quad v_1 (\tau )\to 0,\quad  \mu_1 (\tau )\to 1.$$ This is impossible. The case $T=\infty$ can be discarded because $(0,0,1)$ is not an equilibrium of \eqref{yv1mu10}. If $T<\infty$ there is a contradiction with the invariance of $\{ y=0\}$ under the flow associated to \eqref{yv1mu10}.
%\begin{align}

The proof of the second claim will be more delicate and will depend on the analysis of the local dynamics around equilibria of \eqref{yv1mu10}.
Now, \eqref{yv1mu10} has a single equilibrium at $(y,v_1,\mu_1)=(0,0,0)$. The linearization has eigenvalues $0,-\delta,-\delta$. Consequently, by center manifold theory (see e.g. \cite{car1}) we obtain the following.
\begin{lemma}\lemmalab{111}
Fix any $k\in \mathbb N$. Then there exists a one-dimensional attracting center manifold $W^c$ of $(y,v_1,\mu_1)=(0,0,0)$ for \eqref{yv1mu10}.  In particular, locally $W^c$ takes the following $C^k$-smooth graph form
 \begin{align}
  v_1 = -\frac{1}{\delta} y\left(1+\mathcal O(y)\right),\quad \mu_1=G(y),\eqlab{WC}
 \end{align}
  over $y\in [0,y_0]$, $y_0>0$ small enough; here $G:[0,y_0]\rightarrow [0,\infty)$ is a $C^k$-flat function so that $G^{(n)}(0)=0$ for any $0\ne n\le k$.
\end{lemma}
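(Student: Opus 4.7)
The Jacobian of \eqref{yv1mu10} at the origin is
\begin{align*}
A=\begin{pmatrix} 0 & 0 & 0 \\ -1 & -\delta & 0 \\ 0 & 0 & -\delta \end{pmatrix},
\end{align*}
so the spectrum is $\{0,-\delta,-\delta\}$. The center eigenspace is one-dimensional, spanned by $(1,-1/\delta,0)^{\top}$, while the stable eigenspace is two-dimensional, spanned by $(0,1,0)^{\top}$ and $(0,0,1)^{\top}$. By the standard center manifold theorem (see \cite{car1}), for each fixed $k\in \mathbb N$ there exists a locally invariant $C^k$ center manifold $W^c$ tangent to the center eigenspace at the origin; because both transverse eigenvalues equal $-\delta<0$, the same theorem provides an exponentially contracting stable foliation over $W^c$, and hence $W^c$ is attracting.

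Write $W^c=\{(y,h_1(y),h_2(y)):y\in[0,y_0]\}$ with $h_i\in C^k$, $h_i(0)=0$, $h_1'(0)=-1/\delta$, $h_2'(0)=0$. The invariance equations read
\begin{align*}
 h_1'(y)\,\tfrac{\gamma}{2}y^2 h_1(y) &= -y + h_2(y)^2 y + \tfrac12 h_1(y)^2 y - \delta h_1(y),\\
 h_2'(y)\,\tfrac{\gamma}{2}y^2 h_1(y) &= -\bigl(\tfrac12 y h_1(y)+\delta\bigr) h_2(y).
\end{align*}
Solving the first relation for $h_1$ near $y=0$ gives $h_1(y)=-y/\delta+\mathcal O(y^2)$, which is the asserted form. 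For $h_2$, I first exhibit a convenient choice of $W^c$. The plane $\{\mu_1=0\}$ is invariant under \eqref{yv1mu10} because $\mu_1$ factors out of the right-hand side of the $\mu_1$-equation. The restriction of \eqref{yv1mu10} to this plane is a two-dimensional system whose linearization at the origin has spectrum $\{0,-\delta\}$, so the planar center manifold theorem yields a $C^k$ graph $v_1=\tilde h(y)$ satisfying $\tilde h(y)=-y/\delta+\mathcal O(y^2)$; embedding, $\{(y,\tilde h(y),0):y\in[0,y_0]\}$ is a bona fide $C^k$ center manifold for the full three-dimensional system. Selecting $W^c$ to be this manifold yields $G\equiv 0$, which trivially satisfies the stated flatness and nonnegativity properties.

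The main obstacle is to explain why the flatness of $G$ is intrinsic rather than an artefact of this particular choice of $W^c$. This is addressed by the uniqueness of the Taylor expansion of a center manifold at the equilibrium: plugging a formal expansion $h_2(y)=\sum_{n\ge 1}a_n y^n$ into the second invariance equation above and using $h_1(y)=\mathcal O(y)$, the left-hand side is $\mathcal O(y^{n+2})$ whenever $h_2(y)=a_n y^n+\mathcal O(y^{n+1})$, whereas the right-hand side has leading term $-\delta a_n y^n$. Comparing the coefficient of $y^n$ therefore forces $a_n=0$, inductively for every $1\le n\le k$. Hence every $C^k$ center manifold of the origin has $G$ that is $C^k$-flat at $0$, and combined with the attracting property already noted this completes the proof.
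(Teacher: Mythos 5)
Your proof is correct and follows the same route the paper intends (the paper's proof is just ``Simple calculation'' appealing to center manifold theory): linearize to get the spectrum $\{0,-\delta,-\delta\}$, invoke the center manifold theorem for existence and attractivity, and read off \eqref{WC} from the invariance equations. Your two additions --- realizing a concrete center manifold inside the invariant plane $\{\mu_1=0\}$ so that $G\equiv 0$, and the Taylor-coefficient induction showing flatness of $G$ is forced for \emph{every} $C^k$ center manifold --- are exactly the details the paper leaves implicit, and both are sound.
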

\begin{proof}
 Simple calculation.
\end{proof}
On $W^c$, we find 
\begin{align}
 y' = -\frac{\gamma}{2\delta}y^3  \left(1+\mathcal O(y)\right).\eqlab{yEqnCM1}
\end{align}
Consequently, $(y,v_1,\mu_1)=(0,0,0)$ is an attractor for \eqref{yv1mu10} and the center manifolds are nonunique. The rest of the proof of \propref{gammaPos} will be presented below together with the case of positive $\alpha$.

\subsection{The case $\alpha>0$}
% there is an attracting center manifold of the graph 
In this case, we use a chain of changes of variables that can be summed up by the formulas:
\begin{align}\eqlab{trans1}
 r= q_1^{\frac{2(1+\alpha)}{\gamma}},\quad p = q_1^{\frac{2\beta-4}{\gamma}}  v_{11},\quad l = q_1^{\frac{1+\alpha+\gamma}{\gamma}}\mu_{11},\quad \frac{d\tau}{dt}=q_1^{-\frac{2(2\alpha+\beta)}{\gamma}},
\end{align}
see \eqref{veqn}, \eqref{Phi2V}, \eqref{mu2l22} and \eqref{final}.
This gives the equations \eqref{beta1V2P1eqnsbu}, repeated here for convenience
\begin{equation}\eqlab{beta1V2P1eqnsbu1}
\begin{aligned}
 q_1' &=\frac{\gamma}{2(\alpha+1)}q_1^{3} v_{11},\\
 v_{11}'&=-1+q_1^{2} \left(\mu_{11}^2+\frac{2-\beta}{\alpha+1}v_{11}^2\right)-\delta \left(\mu_{11}^2+v_{11}^2\right)^{\frac{\alpha}{2}} v_{11},\\
 \mu_{11}' &=-\left(\frac{\alpha+\beta-1}{\alpha+1} q_1^{2} v_{11}+\delta \left(\mu_{11}^2+v_{11}^2\right)^{\frac{\alpha}{2}}\right)\mu_{11}.
\end{aligned}
\end{equation}
with $()'=\frac{d}{d\tau}$.  In these coordinates, we have
\begin{align}
 e = 1-2q_1^2\mu_{11}^2\left(1-\frac12q_1^2 (\mu_{11}^2+v_{11}^2)\right).\eqlab{ehere1}
\end{align}
We can now complete the proof of the first claim of \propref{gammaPos}. We proceed as before and assume by contradiction the existence of a circularizing solution. The corresponding solution of \eqref{beta1V2P1eqnsbu1} satisfies, for $\tau \to T$, $$q_1 (\tau )\to 0,\; \; \; q_1 (\tau )^2 \mu_{11} (\tau )^2 \to 1,\; \; \; q_1 (\tau )^2 v_{11} (\tau )^2 \to 0.$$ In consequence, 
$q_1 (\tau )^2 |v_{11} (\tau )|\leq q_1 (\tau )^2 (1+ v_{11} (\tau )^2)  \to 0$. From the third equation of \eqref{beta1V2P1eqnsbu1} it is possible to obtain a differential inequality of the type $$\mu_{11}'\leq -(\epsilon (t)+\delta \mu_{11}^{\alpha} )\mu_{11},\; \; \; \mu_{11} >0,$$
with $\lim_{\tau \to T} \epsilon (\tau )=0$. For $\tau \to T$ we know that $q_1 (\tau )^2 \mu_{11} (\tau )^2 \to 1$, then $\mu_{11}(\tau )\to \infty$. This is incompatible with the above differential inequality.  \par
As in the previous case we study the local behavior of the system near the equilibrium.
Now, $(q_1,v_{11},\mu_{11})=(0,v_{11,*},0)$, with $v_{11,*}$ given by \eqref{pstar} repeated here for convenience
% Define
\begin{align}
v_{11,*}:=-{\delta^{-\frac{1}{\alpha+1}}}, \nonumber
\end{align}
is the unique equilibrium for \eqref{beta1V2P1eqnsbu1}. A simple computation shows that the linearization has eigenvalues
\begin{align*}
 0,\,-\delta^{\frac{1}{1+\alpha}},\,-(1+\alpha)\delta^{\frac{1}{1+\alpha}}.
\end{align*}
\begin{lemma}\lemmalab{aaa}
Fix any $k\in \mathbb N$. Then there exists a one-dimensional center manifold $W^c$ of $(q_1,v_{11},\mu_{11})=(0,v_{11,*},0)$ for \eqref{beta1V2P1eqnsbu1}. In particular, locally $W^c$ takes the following $C^k$-smooth graph form
 \begin{align*}
  v_{11} = v_{11,*}(1+\mathcal O(q_1^2)),\quad \mu_{11}=G(q_1)
 \end{align*}
 over $q_1\in [0,q_{1,0}]$, $q_{1,0}>0$ small enough; here $G:[0,y_0]\rightarrow [0,\infty)$ is a $C^k$-flat function so that $G^{(n)}(0)=0$ for any $0\ne n\le k$.
\end{lemma}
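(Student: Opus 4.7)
My plan is to apply standard center manifold theory after first checking that the vector field is smooth near the target equilibrium, then to extract the leading order of $h_1$ by matching orders in the invariance equation, and finally to force the flatness of $G$ by exploiting the invariance of the plane $\{\mu_{11}=0\}$.

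First, although $(\mu_{11}^2+v_{11}^2)^{\alpha/2}$ is only $C^0$ at the origin of the $(v_{11},\mu_{11})$-plane, it is real-analytic in a neighborhood of $(v_{11,*},0)$ since $v_{11,*}=-\delta^{-1/(\alpha+1)}\ne 0$, and hence \eqref{beta1V2P1eqnsbu1} is $C^\infty$ (indeed analytic) near the equilibrium $(0,v_{11,*},0)$. A direct computation shows that the Jacobian there is diagonal with eigenvalues $0$, $-(\alpha+1)\delta^{1/(\alpha+1)}$ and $-\delta^{1/(\alpha+1)}$ associated with the $q_1$-, $v_{11}$- and $\mu_{11}$-axes respectively. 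In particular the center eigenspace is spanned by the $q_1$-direction and the remaining two directions are uniformly stable.

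The classical $C^k$ center manifold theorem (see e.g. \cite{car1}) then provides, for every $k\in \mathbb N$, a one-dimensional local $C^k$ center manifold $W^c$ that can be written as a graph $(v_{11},\mu_{11})=(h_1(q_1),G(q_1))$ over $q_1\in [0,q_{1,0}]$ with $h_1(0)=v_{11,*}$, $G(0)=0$ and (by tangency to the center eigenspace) $h_1'(0)=G'(0)=0$. Substituting this ansatz into the invariance equation $h_1'(q_1)\,q_1'=v_{11}'\big|_{W^c}$, using that $q_1'=\mathcal{O}(q_1^3)$ from the first line of \eqref{beta1V2P1eqnsbu1}, and inverting the nonvanishing coefficient $-(\alpha+1)\delta^{1/(\alpha+1)}$ of $\phi:=h_1-v_{11,*}$ in the $v_{11}$-equation at leading order, one obtains $\phi=\mathcal{O}(q_1^2)$, which is precisely the expansion $h_1(q_1)=v_{11,*}(1+\mathcal{O}(q_1^2))$ stated in the lemma.

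The delicate issue, and the step I expect to be the main obstacle, is the $C^k$-flatness of $G$, which cannot be read off directly from Taylor matching. The key observation is that $\{\mu_{11}=0\}$ is invariant under \eqref{beta1V2P1eqnsbu1} since the third equation carries $\mu_{11}$ as an explicit factor, and the equilibrium lies in this plane. Applying the center manifold theorem to the restricted two-dimensional system on $\{\mu_{11}=0\}$ produces a $C^k$ invariant curve through the equilibrium, tangent to the $q_1$-axis; viewed in $\mathbb R^3$ this is a $C^k$ center manifold of the full system with $G\equiv 0$, and the nonnegativity constraint $G\ge 0$ in the lemma can be met by this very choice. Since any two $C^k$ center manifolds of a given equilibrium must agree to order $k$ at that point, any alternative representation $\mu_{11}=G(q_1)$ must satisfy $G^{(n)}(0)=0$ for every $n\le k$; choosing $k$ arbitrarily large yields the claimed flatness.
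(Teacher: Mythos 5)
Your proof is correct and follows the same route the paper intends: the paper's own proof is just ``Simple calculation'' plus the observation that \eqref{beta1V2P1eqnsbu1} is analytic near $(0,v_{11,*},0)$, and your write-up supplies exactly the details behind that remark (analyticity because $v_{11,*}\neq 0$, the diagonal Jacobian with eigenvalues $0,-(\alpha+1)\delta^{1/(1+\alpha)},-\delta^{1/(1+\alpha)}$, the $C^k$ center manifold theorem, Taylor matching in the invariance equation for the $\mathcal O(q_1^2)$ expansion, and flatness of $G$ via the invariant plane $\{\mu_{11}=0\}$ together with uniqueness of the Taylor expansion of center manifolds). No gaps.
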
\lemmalab{222}
\begin{proof}
 Simple calculation. Note that the vector field in \eqref{beta1V2P1eqnsbu1} is analytic in a neighborhood of the equilibrium.
\end{proof}
On $W^c$, we have the following reduced problem
\begin{align}
 q_1' &=\frac{\gamma}{2(\alpha+1)}q_1^{3}v_{11,*}(1+\mathcal O(q_1^2)),\eqlab{q1eqn}
\end{align}
Since $v_{11,*}<0$, $(q_1,v_{11},\mu_{11})=(0,v_{11,*},0)$ is an attractor for \eqref{beta1V2P1eqnsbu1} and the center manifold is nonunique. 

{It is now possible to define the set $\mathcal{U}$ appearing in the second claim of \propref{gammaPos}. Let $\mathcal{V} \subset \mathbb{R}^3$ be the region of attraction of the equilibrium for \eqref{beta1V2P1eqnsbu1}. In the case $\alpha =0$ we should define $\mathcal{V}$ as the attracting region of the origin for \eqref{yv1mu10}. The asymptotic stability of the equilibrium implies that $\mathcal{V}$ is open. Consider $\mathcal{V}_+ =\mathcal{V} \cap \{ q_1 >0\}$. The change of variables defined by \eqref{trans1} will transform $\mathcal{V}_+$ in an open subset $\mathcal{W}$ of $\mathbb{R}^3 \cap \{ r>0\}$. Similar arguments allow us to define $\mathcal{W}$ in the case $\alpha =0$. Finally, the set $\mathcal{U}\subset \mathcal P$
is obtained from $\mathcal{W}$ via the action of the group of rotations in the phase space. Precisely,
$$\mathcal{U}=\{ (u, \dot{u})\in \mathcal P:\; u=re^{i\theta},\, \dot{u}=pe^{i\theta}+\frac{l}{r} ie^{i\theta} ,\; (r,p,l)\in \mathcal{W},\; \theta \in \mathbb{R} \}.$$
This set is open in $\mathbb{R}^4$.}
%The proof of the second claim will be obtained after a sequence of lemmas.

%\fbox{I think this is wrong: It assumes that $\mu_{11}$ is bounded.}
%{To finish the proof of \propref{gammaPos} in the case $\alpha>0$, we  %proceed as in the $\alpha=0$ case: Since $r(t)\rightarrow 0$ for %$t\rightarrow \omega$ by assumption, we have $q_1(\tau)\rightarrow 0$ for %$\tau\rightarrow \infty$ and this in turn, using the equations for $q_1$ and %$v_{11}$, leads to $v_{11}$ being bounded and $v_1(\tau)<0$ for all $\tau\gg %0$. Finally, using the $\mu_{11}$-equation, we conclude that %$(y,v_{11},\mu_{11})(\tau)\rightarrow (0,v_{11,*},0)$ for $\tau\rightarrow %\infty$. }

%first use the $v_{11}$-equation 
%of \eqref{beta1V2P1eqnsbu1} to conclude that if $r(t)\rightarrow 0$  for $t\rightarrow \omega$  then $v_{11}(\tau)<0$ for all $\tau$ large enough. 
%But then $v_{11}(\tau)$ is bounded since $v_{11}'(\tau)>0$ for all $v_{11}(\tau)<0$ sufficiently negative.  In turn, we can conclude, from the $\mu_{11}$-equation 
%of \eqref{beta1V2P1eqnsbu1}, that $\mu_{11}(\tau)$ is monotonically decreasing for all $\tau$ large enough. We finally conclude that if
%$r(t)\rightarrow 0$ for $t\rightarrow \omega$ then $(y,v_{11},\mu_{11})(\tau)\rightarrow (0,v_{11,*},0)$ for $\tau\rightarrow \infty$. 
%}

\begin{lemma}\lemmalab{timeblowup1}
% \begin{lemma}
Suppose that $\gamma>0$ and that $(r(0),p(0),l(0))\in \mathcal{W}$. Then $\omega=\infty$ if and only if $\alpha-\beta+3 \leq 0$.
%  Something about global time?
% \end{lemma}
\end{lemma}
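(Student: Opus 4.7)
The plan is to translate the question of finiteness of $\omega$ into a convergence question for an integral in the desingularized time $\tau$, then use the center-manifold asymptotics from \lemmaref{aaa} (respectively \lemmaref{111} when $\alpha=0$). From the time rescaling in \eqref{trans1}, $\frac{dt}{d\tau}=q_1^{2(2\alpha+\beta)/\gamma}$, so for $(r(0),p(0),l(0))\in \mathcal W$ the corresponding trajectory of \eqref{beta1V2P1eqnsbu1} is defined for all $\tau\ge 0$ (it lies in the basin of attraction of the equilibrium $(0,v_{11,*},0)$), and
\begin{equation*}
\omega \;=\; \int_0^\infty q_1(\tau)^{\frac{2(2\alpha+\beta)}{\gamma}}\,d\tau .
\end{equation*}
Hence the lemma reduces to extracting the long-time decay rate of $q_1(\tau)$.

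On the one-dimensional center manifold $W^c$, equation \eqref{q1eqn} reads $q_1'=-c\,q_1^3(1+\mathcal O(q_1^2))$ with $c:=-\gamma v_{11,*}/(2(\alpha+1))>0$ (since $v_{11,*}<0$ and $\gamma>0$). Integrating $\frac{d}{d\tau}(q_1^{-2})=2c(1+o(1))$ yields $q_1(\tau)\sim(2c\tau)^{-1/2}$ as $\tau\to\infty$. Since the two remaining eigenvalues at the equilibrium, $-\delta^{1/(1+\alpha)}$ and $-(1+\alpha)\delta^{1/(1+\alpha)}$, are strictly negative, every trajectory in the attraction region converges exponentially in $\tau$ to $W^c$ (via the stable fibration), so its $q_1$-coordinate satisfies the same reduced equation up to an exponentially small correction and hence inherits the asymptotic $q_1(\tau)\sim K\tau^{-1/2}$ for some $K>0$.

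Substituting gives $q_1(\tau)^{2(2\alpha+\beta)/\gamma}\sim K'\tau^{-(2\alpha+\beta)/\gamma}$, and the standard criterion for $\int^\infty \tau^{-p}\,d\tau$ shows that $\omega<\infty$ iff $(2\alpha+\beta)/\gamma>1$; using $\gamma=\alpha+2\beta-3>0$, this inequality is equivalent to $\alpha-\beta+3>0$, while the borderline case $\alpha-\beta+3=0$ yields a logarithmically divergent integral. Hence $\omega=\infty$ iff $\alpha-\beta+3\le 0$. The case $\alpha=0$ is handled identically, using \eqref{trans}, \eqref{yEqnCM1} and $\frac{dt}{d\tau}=y^{2\beta/\gamma}$ together with $y(\tau)\sim C\tau^{-1/2}$ from the reduced dynamics on the center manifold of \lemmaref{111}: convergence holds iff $\beta/\gamma>1$, i.e.\ iff $\beta<3$, which coincides with $\alpha-\beta+3>0$ at $\alpha=0$.

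The one delicate step is the uniform transfer of the $\tau^{-1/2}$ rate from $W^c$ to arbitrary trajectories in $\mathcal W$. The center manifolds in \lemmaref{aaa} are nonunique, but all of them are tangent to the $q_1$-axis at the equilibrium and the strictly negative stable eigenvalues force exponential attraction of trajectories onto any such $W^c$; this allows one to compare the integrand along a generic trajectory with that on $W^c$ up to an exponentially small perturbation, which affects neither the power-law asymptotic nor the convergence threshold. This argument also needs to be complemented by a short check that the rescaled time $\tau$ actually attains $+\infty$ along the chosen trajectories, which follows from the attraction to $(0,v_{11,*},0)$ together with the fact that the desingularization divided by a strictly positive quantity on $\{q_1>0\}$.
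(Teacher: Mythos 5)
Your proposal is correct and follows essentially the same route as the paper: reduce $\omega$ to $\int^\infty q_1(\tau)^{2(2\alpha+\beta)/\gamma}\,d\tau$ via \eqref{trans1}, extract $q_1(\tau)\sim\tau^{-1/2}$ from the center-manifold reduction \eqref{q1eqn}, and test convergence of $\int^\infty\tau^{-(2\alpha+\beta)/\gamma}\,d\tau$, which gives $\alpha-\beta+3>0$ as the finite-time threshold. Your explicit justification of the transfer of the $\tau^{-1/2}$ rate from $W^c$ to generic trajectories (via the strictly negative stable eigenvalues) and your explicit treatment of the $\alpha=0$ case merely spell out steps the paper leaves implicit.
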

\begin{proof}
We only focus on the $\alpha>0$ case; the case $\alpha=0$ can be studied in the same way using the results from \secref{gammaPosAlpha0}, see also \remref{rem:margheri}.
We know that the corresponding solution of \eqref{beta1V2P1eqnsbu1} is defined on $[0,\infty )$ and $(q_1,v_{11},\mu_{11})(\tau)\rightarrow (0,v_{11,*},0)$ for $\tau\rightarrow \infty$, and since the flow near $(q_1,v_{11},\mu_{11})=(0,v_{11,*},0)$ is described by the flow on a center manifold, we only need to consider \eqref{q1eqn}. This gives
  \begin{align}\eqlab{qineq}
  q_1(\tau) \sim \frac{1}{\sqrt{\tau}},
 \end{align}
 for all $\tau\ge 1$, say. 
Here and in the following, we use $\sim$ to indicate that there exists constants $0<c_1<c_2$ such that the left hand side of \eqref{qineq} is bounded from below (above) by $c_1$ ($c_2$, respectively) times the right hand side of \eqref{qineq}, i.e.
\begin{align*}
 \frac{c_1}{\sqrt{\tau}} \le q_1(\tau)\le \frac{c_2}{\sqrt{\tau}},
\end{align*}
for all $\tau\ge 1$.
 %  For $\alpha>0$, we obtain
%  \begin{align*}
%   q_1(\tau) \sim \frac{1}{\sqrt{\tau}},
%  \end{align*}
Therefore by using \eqref{trans1}, we find that
 \begin{align}
  \frac{d\tau}{dt}\sim \tau^{\frac{2\alpha+\beta}{\gamma}},\eqlab{dtau_dt1}
 \end{align}
for all $\tau\ge 1$. Finally, from \eqref{dtau_dt1},
$$\omega =t_1 +\int_1^\infty \frac{dt}{d\tau}(\tau ) d\tau \leq c_3 \left(1+\int_1^{\infty} \frac{d\tau}{\tau^{\frac{2\alpha +\beta}{\gamma}}}\right)<\infty,$$
 if and only if
\begin{align*}
 \frac{2\alpha+\beta}{\alpha+2\beta-3}>1\Leftrightarrow \alpha-\beta+3>0,
\end{align*}
and the result therefore follows.
\end{proof}
\begin{lemma}\lemmalab{p1}
Consider the same assumptions as in \lemmaref{timeblowup1}. Then
\begin{align*}
 \lim_{t\rightarrow \omega} p(t) =\begin{cases}
                  \infty &\mbox{for $\beta<2$},\\
                  v_{11,*} & \mbox{for $\beta=2$},\\
                  0 & \mbox{for $0\le \beta<2$}.
                 \end{cases}
\end{align*}
\end{lemma}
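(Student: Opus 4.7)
The plan is to read off the limit of $p$ directly from the transformation formulas in \eqref{trans1} combined with the asymptotic description of trajectories on the attracting center manifold $W^c$ furnished by \lemmaref{aaa}. Recall that $p = q_1^{(2\beta-4)/\gamma}\, v_{11}$, and that any initial condition in $\mathcal{W}$ corresponds to a trajectory of \eqref{beta1V2P1eqnsbu1} in the basin of attraction of the equilibrium $(0,v_{11,*},0)$. Thus, as $\tau\to\infty$, we have $v_{11}(\tau)\to v_{11,*}$ and $q_1(\tau)\to 0$. Moreover, from \lemmaref{timeblowup1} we know $\omega$ and the direction $t\to\omega$ correspond to $\tau\to\infty$ regardless of whether $\omega$ is finite or infinite, so the limits $t\to\omega$ and $\tau\to\infty$ are interchangeable.

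The proof then reduces to a case distinction on the sign of the exponent $(2\beta-4)/\gamma$. Since $\gamma>0$ in the present regime, this sign is the sign of $\beta-2$. The three cases are:
\begin{itemize}
\item If $\beta=2$, the exponent vanishes and $p(\tau)=v_{11}(\tau)\to v_{11,*}$ directly.
\item If $\beta>2$, the exponent is positive, so $q_1^{(2\beta-4)/\gamma}\to 0$, and since $v_{11}$ is bounded (converging to $v_{11,*}$), we conclude $p\to 0$.
\item If $\beta<2$, the exponent is negative, so $q_1^{(2\beta-4)/\gamma}\to\infty$, while $v_{11}\to v_{11,*}<0$; hence $|p|\to\infty$ (with $p$ ultimately of the sign of $v_{11,*}$, i.e. the third case in the lemma as stated should be understood as divergence of $p$, in accordance with the remark following \thmref{main}).
\end{itemize}

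The only step requiring a little care is the control of $q_1$ in the $\beta<2$ case: one must rule out that $q_1$ vanishes so fast that the product $q_1^{(2\beta-4)/\gamma} v_{11}$ fails to diverge because of cancellation with the approach of $v_{11}$ to $v_{11,*}$. But this is immediate since $v_{11}$ converges to a nonzero limit, so for $\tau$ large we have $|v_{11}(\tau)|\geq \tfrac12 |v_{11,*}|>0$, and the divergence of $q_1^{(2\beta-4)/\gamma}$ alone forces $|p|\to\infty$. The quantitative rate $q_1(\tau)\sim\tau^{-1/2}$ from \eqref{qineq} is not needed here, only the qualitative fact that $q_1\to 0$.

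In summary, the proof is essentially a three-line computation reading off limits from the coordinate change \eqref{trans1} once the asymptotics on $W^c$ are in hand; there is no substantive obstacle beyond correctly tracking signs and exponents.
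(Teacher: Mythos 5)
Your proof is correct and follows essentially the same route as the paper's: the paper likewise deduces the result from the convergence $(q_1,v_{11},\mu_{11})(\tau)\rightarrow(0,v_{11,*},0)$ together with the sign of the exponent $\frac{2\beta-4}{\gamma}$ in \eqref{trans1}, which equals the sign of $\beta-2$ since $\gamma>0$. You also correctly read the third case of the statement as the (evidently mistyped) case $\beta>2$, and your observation that for $\beta<2$ the divergence carries the sign of $v_{11,*}<0$ is a fair point about the statement itself.
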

\begin{proof}
Consider $\alpha>0$ (the case $\alpha=0$ can be studied in the same way using the results from \secref{gammaPosAlpha0}). {For $\beta =2$, $v_{11,*} =-\frac{1}{\delta}$}.
The result then follows from (a) $(q_1,v_{11},\mu_{11})(\tau)\rightarrow (0,v_{11,*},0)$ for $\tau\rightarrow \infty$ and (b) the fact that the exponent on $q_1$ in {the second identity of} \eqref{trans1} for $\gamma>0$ {satisfies}
\begin{align*}
 \frac{2\beta-4}{\gamma} \lessgtr 0 \Leftrightarrow \beta \lessgtr 2.
\end{align*}
\end{proof}
\begin{lemma}\lemmalab{theta1}
Consider the same assumptions as in \lemmaref{timeblowup1}. Then 
\begin{align*}
 \lim_{t\rightarrow \omega} \theta(t),
\end{align*}
exists.
\end{lemma}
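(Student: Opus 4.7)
The plan is to reduce the question of whether $\lim_{t\to\omega}\theta(t)$ exists to a question about convergence of an integral, and then to estimate that integral using the center manifold description from \lemmaref{aaa} (resp.\ \lemmaref{111}). Since $u=re^{i\theta}$ and $l=r^2\dot\theta$, recall \eqref{leqn}, we have
\begin{equation*}
\theta(t)-\theta(0)=\int_0^t \frac{l(s)}{r(s)^2}\,ds,
\end{equation*}
so the lemma will follow once we show this improper integral converges as $t\to\omega$.

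Next, I would rewrite the integrand in the blowup time $\tau$. For $\alpha>0$, a direct computation using \eqref{trans1} shows that the exponent of $q_1$ in $l r^{-2}\,\frac{dt}{d\tau}$ collapses to exactly $2$ (the $-3(1+\alpha)$ from $lr^{-2}$ and the $2(2\alpha+\beta)$ from $dt/d\tau$ combine with the $\gamma$ already present to give $2\gamma/\gamma$), so that
\begin{equation*}
\theta(\tau)-\theta(0)=\int_0^\tau q_1(s)^2\,\mu_{11}(s)\,ds.
\end{equation*}
In the $\alpha=0$ case the analogous computation based on \eqref{trans} yields $\theta(\tau)-\theta(0)=\int_0^\tau y(s)\,\mu_1(s)\,ds$, using $\gamma=2\beta-3$. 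By \lemmaref{timeblowup1} the trajectory is defined for all $\tau\ge 0$ in the blowup time and converges to the equilibrium, so convergence of $\theta(t)$ is equivalent to the absolute convergence of these integrals at $+\infty$.

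To bound the integrands I would use the center manifold reduction together with the estimate $q_1(\tau)\sim\tau^{-1/2}$ from \eqref{qineq} (and the analogous $y(\tau)\sim\tau^{-1/2}$ from \eqref{yEqnCM1}). Splitting
\begin{equation*}
\mu_{11}(\tau)=G(q_1(\tau))+\bigl(\mu_{11}(\tau)-G(q_1(\tau))\bigr),
\end{equation*}
the first term is $C^k$-flat in $q_1$ for every $k$ by \lemmaref{aaa}, and hence is $O(\tau^{-k/2})$; the second term decays exponentially in $\tau$ because the eigenvalue transverse to $W^c$ at the equilibrium equals $-(1+\alpha)\delta^{1/(1+\alpha)}<0$, so trajectories attach to $W^c$ with an exponential rate controlled by the stable foliation. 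Choosing $k$ large enough, $q_1^2\mu_{11}=O(\tau^{-1-k/2})+O(\tau^{-1}e^{-c\tau})$ is integrable, and the same argument using \lemmaref{111} handles $y\mu_1$ in the $\alpha=0$ case.

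The most delicate point will be the exponential attraction to a merely $C^k$-smooth center manifold, since standard center manifold theory only gives $C^k$ regularity and not analyticity. I would handle this either by invoking a $C^k$ invariant stable foliation over $W^c$ (which exists with exponential contraction rate arbitrarily close to $-(1+\alpha)\delta^{1/(1+\alpha)}$), or more elementarily by writing the equation for $\mu_{11}-G(q_1)$, observing that along the trajectory the $\mu_{11}$-equation near the equilibrium takes the form $\mu_{11}'=-\bigl(\delta^{1/(1+\alpha)}(1+\alpha)+o(1)\bigr)\mu_{11}+\text{(flat terms in $q_1$)}$, and applying a Grönwall-type estimate. Either route is fairly routine, and once the decay bound $\mu_{11}(\tau)=O(\tau^{-N})$ for some $N>1$ is in hand the lemma follows immediately.
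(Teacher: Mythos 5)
Your proposal is correct and follows essentially the same route as the paper: both reduce the claim to the convergence of $\int_0^\infty \frac{d\theta}{d\tau}\,d\tau$ with $\frac{d\theta}{d\tau}=q_1^2\mu_{11}$ via \eqref{trans1}, and conclude from the rapid decay of $\mu_{11}$ along trajectories attracted to the equilibrium (the paper simply asserts exponential decay of $\mu_{11}$, which is exactly your more elementary Gr\"onwall alternative applied to the $\mu_{11}$-equation in \eqref{beta1V2P1eqnsbu1}, since its coefficient tends to $-\delta^{\frac{1}{1+\alpha}}<0$). The only cosmetic slip is that the linearized decay rate in the $\mu_{11}$-direction is $-\delta^{\frac{1}{1+\alpha}}$ rather than $-(1+\alpha)\delta^{\frac{1}{1+\alpha}}$ (the latter is the $v_{11}$-direction eigenvalue), which does not affect the argument.
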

\begin{proof}
We consider $\alpha>0$; again $\alpha=0$ is similar and therefore left out. 
  We have $l=r^2 \dot{\theta}$, $\dot\theta=\frac{d}{dt}\theta$. Then by \eqref{trans1}  it follows that
$$\frac{d\theta}{d\tau} =\frac{d\theta}{dt} \frac{dt}{d\tau}=\mu_{11} q_1^2.$$
Since $\mu_{11}(\tau)\rightarrow 0$ exponentially (due to the center manifold $W^c$, see also \eqref{beta1V2P1eqnsbu1}), the limit
$$\lim_{t\to  \omega^-} \theta (t)=\theta (0)+\int_0^\infty \frac{d\theta}{d\tau}(\tau )d\tau,$$ is finite.
\end{proof}
\begin{remark}\remlab{rem:margheri}
 {In the case $\alpha =0$, the properties stated in the previous lemmas were already found in \cite[Theorem 6]{margheri2020a}. The result in that paper was valid for all possible collision solutions. }
\end{remark}
% $(q_1,v_{11},\mu_{11})=(0,v_{11,*},0)$ attracts all bounded orbits. This together with \eqref{ehere1}, finishes the proof of  \propref{gammaPos} in the case $\alpha>0$.  

% Then a simple calculation shows that $(v_{11,*},0)$ is a hyperbolic equilibrium for \eqref{beta1V2P1eqnsbumu10}. We have therefore gained hyperbolicity through blowup. We study the dynamics in details in \secref{gammaPos}
\begin{lemma}\lemmalab{ecc1}
Consider the same assumptions as in \lemmaref{timeblowup1}. Then the eccentricity vector $\mathcal E(t)\in \mathbb C$ has a limit as $t\rightarrow \omega$:
\begin{align*}
    \lim_{t\rightarrow \omega} \mathcal E(t) = -e^{i\lim_{t\rightarrow \omega} \theta(t)}. 
\end{align*}
\end{lemma}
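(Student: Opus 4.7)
The plan is to write the eccentricity vector in polar form and then read off its limit from the blown-up coordinates, exactly the same coordinates used in Lemmas \lemmaref{timeblowup1}--\lemmaref{theta1}. First, I identify $\mathbb R^2$ with $\mathbb C$ as in \eqref{ueqn}. Since $u=re^{i\theta}$ and $\dot u=(p+il/r)e^{i\theta}$, and since in $\mathbb C$ the wedge product $\dot u\wedge L$ corresponds to $-il\dot u$ (with $l=|L|\ge 0$), a direct computation from \eqref{ecc} yields the identity
\begin{align*}
\mathcal E(u,\dot u) \;=\; \Bigl(\tfrac{l^2-r}{r}-iv\Bigr)e^{i\theta},\qquad v=pl.
\end{align*}
This is the key formula: the modulus squared reproduces \eqref{Enorm}, and the factor $e^{i\theta}$ isolates the rotational motion whose limit is handled by \lemmaref{theta1}.

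Next, I substitute the blowup coordinates. For $\alpha>0$, using \eqref{trans1}, one finds that the exponents collapse to
\begin{align*}
v \;=\; q_1^{2}\, v_{11}\mu_{11},\qquad \frac{l^2}{r} \;=\; q_1^{2}\,\mu_{11}^{2},
\end{align*}
because $\frac{2\beta-4}{\gamma}+\frac{1+\alpha+\gamma}{\gamma}=2$ and $\frac{2(1+\alpha+\gamma)-2(1+\alpha)}{\gamma}=2$. Since the hypothesis $(r,p,l)(0)\in\mathcal W$ guarantees that $(q_1,v_{11},\mu_{11})(\tau)\to(0,v_{11,*},0)$ (from the center manifold analysis in \lemmaref{aaa} and the asymptotics on $W^c$), both quantities tend to zero as $t\to\omega$. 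Hence
\begin{align*}
\tfrac{l^2-r}{r}-iv \;\longrightarrow\; -1.
\end{align*}
The case $\alpha=0$ is completely analogous: using \eqref{trans} one gets $v=v_1\mu_1$ and $l^2/r=\mu_1^2$, and the convergence to the equilibrium $(0,0,0)$ on $W^c$ established in \lemmaref{111} gives the same conclusion.

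Finally, I combine with \lemmaref{theta1}, which provides $\theta_\omega:=\lim_{t\to\omega}\theta(t)\in\mathbb R$, and continuity of $e^{i(\cdot)}$ yields
\begin{align*}
\lim_{t\to\omega}\mathcal E(t) \;=\; \Bigl(\lim_{t\to\omega}\tfrac{l^2-r}{r}-iv\Bigr)\,e^{i\theta_\omega} \;=\; -e^{i\theta_\omega},
\end{align*}
as claimed. There is no real obstacle here beyond bookkeeping; the only mild subtlety is verifying the exponent cancellations in the $\alpha>0$ case, which rely crucially on the definition $\gamma=\alpha+2\beta-3$. All deeper dynamical content (attractivity of the equilibrium, the existence of $\theta_\omega$, and the finite/infinite time character of $\omega$) has already been established in the preceding lemmas.
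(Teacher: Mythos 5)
Your proof is correct and follows essentially the same route as the paper: both arguments reduce to showing that $\dot u\wedge L(u,\dot u)\to 0$ in the blowup coordinates (the paper via the bound $\vert \dot u\wedge L\vert \le \vert p\vert l+\frac{l^2}{r}=q_1^2\vert v_{11}\vert \mu_{11}+q_1^2\mu_{11}^2\to 0$, you via the explicit polar form $\mathcal E=(\frac{l^2-r}{r}-iv)e^{i\theta}$ with the same exponent cancellations) and then invoke \lemmaref{theta1} for the limit of $\theta$. Your version is merely more explicit in the bookkeeping; the content is identical.
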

\begin{proof}
We have
\begin{align*}
\vert \dot u\wedge L(u,\dot u)\vert \le \vert p\vert l + \frac{l^2}{r} = \mu_1 |v_1 |+\mu_1^2\rightarrow 0,
\end{align*}
in the case $\alpha=0$ and 
\begin{align*}
   \vert \dot u\wedge L(u,\dot u)\vert \le q_1^2|v_{11}|\mu_{11} + q_{1}^2 \mu_{11}^2\rightarrow 0, 
\end{align*}
in the case $\alpha>0$ as $\tau\rightarrow \infty$. The result therefore follows from \lemmaref{theta1} and the formula for $\mathcal E$, see \eqref{ecc}.
\end{proof}
\section{Circularization for $-3<\gamma<0$}\seclab{gammaNeg}
In this section, we will prove the following.
\begin{proposition}\proplab{gammaNeg}
 Suppose that  $-3<\gamma< 0$. Then \eqref{keplerd} has the circularization property on an open and non-empty set of initial conditions. 
\end{proposition}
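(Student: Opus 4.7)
The strategy is to analyze the analytic system \eqref{V1eqnsgammaNegative} coming from the blowup $\Phi^V$ together with the rescaling $x=\rho_1^{-\gamma}$, to identify a non-hyperbolic but stable equilibrium, and to show that its basin of attraction lifts to the open set of circularizing initial conditions.

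First I would identify the candidate equilibrium. With $l=\rho_1$ and $r=\rho_1^2 r_1$, the eccentricity formula \eqref{Enorm} reads
\begin{equation*}
e = v^2 + \frac{(r_1-1)^2}{r_1^2},
\end{equation*}
so by \lemmaref{circ}, circularization is equivalent to $(r_1,v,\rho_1)\to (1,0,0)$ in the desingularized time of \eqref{V1eqnsgammaNegative}; and since $x=\rho_1^{-\gamma}$ with $-\gamma>0$, this is the same as $(r_1,v,x)\to (1,0,0)$. A direct computation confirms that $(1,0,0)$ is an equilibrium of \eqref{V1eqnsgammaNegative}, and the linearization there has matrix
\begin{equation*}
J=\begin{pmatrix} 0 & 1 & 2\delta \\ -1 & 0 & 0 \\ 0 & 0 & 0 \end{pmatrix},
\end{equation*}
with characteristic polynomial $-\lambda(\lambda^2+1)$ and eigenvalues $\pm i,\,0$. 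Hence $(1,0,0)$ is a zero-Hopf point, exactly as anticipated in the introduction.

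Next I would pass to a tractable form. Since $r_1^{-\alpha-\beta}$ and $(1+r_1^2v^2)^{\alpha/2}$ are analytic near $(1,0,0)$, \eqref{V1eqnsgammaNegative} is analytic there. The linear change $w=v+2\delta x$ puts $J$ into block-diagonal form consisting of a $2\times 2$ rotation block on $(r_1-1,w)$ plus a single $0$ on $x$, so the vector field is analytic with linear part in pre-normal form. I would then invoke a convergent analytic normal-form theorem for zero-Hopf equilibria — this is the role played by the recent results cited in the introduction — to conjugate the system on a neighborhood of the origin to a rotationally symmetric form. Introducing polar coordinates $(\rho,\theta)$ in the oscillatory plane yields a decoupled planar subsystem
\begin{align*}
\dot\rho &= \rho\,A(\rho^2,x),\\
\dot x &= B(\rho^2,x),
\end{align*}
with $\dot\theta=1+O(\rho^2+x)$ and $A,B$ analytic. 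The third equation of \eqref{V1eqnsgammaNegative} yields $B(0,x)=\gamma\delta x^2+O(x^3)$, so since $\gamma<0$ one has $\dot x<0$ for small $x>0$; the off-diagonal $2\delta$ in $J$ is the mechanism by which this decay drives $\rho$ to zero via the normal-form transformation.

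Third I would establish asymptotic stability of $(\rho,x)=(0,0)$ on $\{x\ge 0\}$ for the reduced planar system via a Lyapunov function, e.g.\ a weighted combination $\rho^2+\kappa x$ with $\kappa>0$ chosen so that it decreases along trajectories. This gives asymptotic stability of $(1,0,0)$ for the full three-dimensional system from the side $x>0$. The resulting open basin in $\{x>0\}$ then lifts, through the analytic time rescaling, the blowup $\Phi^V$, and the rotational action on $\mathcal P$ (as in the construction of $\mathcal U$ at the end of \secref{gammaPos}), to an open, non-empty set $\mathcal U\subset\mathcal P$ whose elements circularize.

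The main obstacle is the second step: at a zero-Hopf point one generally has only a \emph{formal} normal form, and stability conclusions drawn from the truncated planar reduction can be spurious. Ensuring that the analytic normal form converges, or at least that the remainder in a finite-order truncation is small enough not to destroy the Lyapunov estimate on the relevant half-space, is the delicate point, and this is where the recent analytic zero-Hopf normal-form theory referenced in the introduction is decisive.
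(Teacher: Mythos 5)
Your setup matches the paper's: same blowup coordinates, same identification of $(r_1,v,x)=(1,0,0)$ as a zero-Hopf point with eigenvalues $\pm i,0$, and the same intention to settle stability via an analytic normal form. But there are two concrete gaps. First, the normal-form step is not merely ``delicate'' --- a convergent zero-Hopf normal form on a full neighborhood, of the general shape $\dot\rho=\rho A(\rho^2,x)$, $\dot x=B(\rho^2,x)$, does not exist in general. The paper instead (i) divides the vector field by $U(r_1,v)=\tilde\gamma\delta r_1^{-\alpha-\beta}(1+r_1^2v^2)^{\alpha/2}$ so that the third equation becomes exactly $x'=-x^2$, (ii) puts the $x=0$ slice (which is reversible/Hamiltonian, hence a genuine center) into an analytic Birkhoff normal form, and (iii) verifies the hypotheses of Bittmann's sectorial normal-form theorem for doubly resonant saddle-nodes, whose conclusion is the very rigid form $z_j'=(\mp\lambda(z_1z_2)+xa_j)z_j$ on a sector in $x$, valid only under the residue condition $\operatorname{Re}(a_1+a_2)<0$. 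That condition is the heart of the matter: the paper computes the quadratic terms of the Birkhoff transformation explicitly to obtain $a_1=a_2=-\frac{\alpha+\beta}{\tilde\gamma}<0$. Your proposal replaces this computation by the assertion that ``the off-diagonal $2\delta$ in $J$ is the mechanism'' driving $\rho$ to zero; no linear-algebra feature of $J$ can decide this, since the sign of the radial coefficient comes from second-order terms and could in principle be positive (in which case the point would be unstable and the proposition false).

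Second, even granting the normal form, the Lyapunov function $V=\rho^2+\kappa x$ does not yield the conclusion. Its derivative is $\dot V=2a x\rho^2-\kappa x^2\le 0$, which vanishes identically on the invariant plane $\{x=0\}$; since that plane is foliated by invariant circles of the reduced system, LaSalle only gives $x(\tau)\to 0$ and $V(\tau)\to c$, i.e.\ $\rho(\tau)^2\to c$ with no control on $c$. The actual mechanism is quantitative: $x(\tau)=x(0)/(1+x(0)\tau)$ decays like $1/\tau$, so $\int_0^\infty x\,d\tau=\infty$ and hence $\rho(\tau)^2=\rho(0)^2\exp\left(2a\int_0^\tau x\,ds\right)\sim\tau^{2a}\to 0$ precisely because $a<0$ and the decay of $x$ is non-integrable. (Had $x$ decayed exponentially, $\rho$ would not converge to $0$.) This is the content of the paper's Corollary on the normal form, and it cannot be replaced by a qualitative monotonicity argument. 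With these two repairs --- the explicit residue computation and the quantitative decay estimate in place of the Lyapunov function --- your outline becomes the paper's proof.
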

% Following \secref{blowup}, we divide the analysis into two parts: $\alpha=0$ and $\alpha>0$. 
% \subsection{$\alpha=0$}

For simplicity, we put $$\tilde \gamma:=-\gamma,$$ henceforth. Then $0<\tilde \gamma<3$. 
Following \secref{blowup}, specifically \secref{blowupNeg}, we consider \eqref{rvl} and use a chain of changes of variables that can be summed up by the formulas:
\begin{align}\eqlab{trans2}
 r= x^{\frac{2}{\tilde \gamma}} r_1,\quad p=\frac{v}{x^{\frac{1}{\tilde \gamma}}},\quad l=x^{\frac{1}{\tilde\gamma}},\quad \frac{d\tau}{dt}=\frac{1}{x^{\frac{3}{\tilde \gamma}}}.
\end{align}
This gives \eqref{V1eqnsgammaNegative0}, repeated here for convenience
\begin{equation}\eqlab{V1eqnsgammaNegative1}
\begin{aligned}
 r_1 '& = v + 2\delta x r_1^{-\alpha-\beta} \left(1+r_1^2v^2\right)^{\frac{\alpha}{2}}r_1,\\
 v'&=-\frac{r_1-1}{r_1^3}-2\delta x r_1^{-\alpha-\beta} \left(1+r_1^2v^2\right)^{\frac{\alpha}{2}}v,\\
 x'&=-\tilde \gamma \delta  r_1^{-\alpha-\beta} \left(1+r_1^2v^2\right)^{\frac{\alpha}{2}}x^2,
\end{aligned}
\end{equation}
where $()'=\frac{d}{d\tau}$. 
% 
% Then 
% \begin{align*}
%  x' &=-\overline \gamma \delta \rho_1^{\overline \gamma+1} r_1^{3-\alpha-\beta} \left(1+r_1^2v^2\right)^{\frac{\alpha}{2}}r_1
% \end{align*}
% 
% 
% after division of the right hand side by $r_1^{\frac32-\frac{\alpha}{2}}$.
Within $x=0$, we have 
\begin{equation}\eqlab{r1vHam}
\begin{aligned}
 r_1 ' &= v,\\
 v'&= -\frac{r_1-1}{r_1^3}.
\end{aligned}
\end{equation}
% which is equivalent to \eqref{vl2} on $l_2>0$ by \eqref{cc12}.
%l_2^{\alpha-1}\left(-l_2^{2}+l_2^{4}\right) = r_1^{-alpha/2+1/2} (-r_1^{-1} + r_1^{-2}) = r_1^{-3/2-alpha/2} (-r_1+1).
 This system is Hamiltonian with Hamiltonian function:
\begin{align*}
 H(r_1,v) = \frac12 v^2+\frac{(r_1-1)^2}{2r_1^2}.
\end{align*}
In particular, $(r_1,v)=(0,0)$ is a center (the linearization having eigenvalues $\pm i$), surrounded by periodic orbits $\Gamma_1\!(h)$, $h\in ]0,\frac12[$, given by the level curves $H(r_1,v)=h$. The orbit $\Gamma_1\!\left(\frac12\right)$ is a separatrix; in particular, when written in the $(v,l_2)$-coordinates {with $l_2^2 =\frac{1}{r_1}$}, $\Gamma_1\!\left(\frac12\right)$ is a homoclinic orbit to the degenerate point $(v,l_2)=(0,0)$ and separates bounded orbits $\Gamma_1\!(h), h\in ]0,\frac12[$, from unbounded ones $\Gamma_1\!(h),h>\frac12$. See \figref{r1v}.

\begin{figure}[h!]
 	\begin{center}
 		%\subfigure[]{\includegraphics[width=.465\textwidth]{./figures/hesterPPlaneNew.pdf}}
 		%\subfigure[]{\includegraphics[width=.49\textwidth]{./figures/hesterMatlabNew.pdf}}
 		{\includegraphics[width=.65\textwidth]{./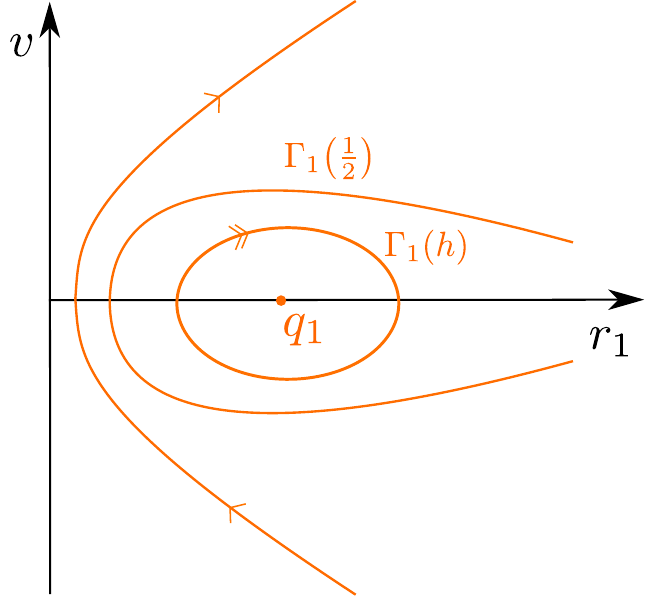}}
%  		\subfigure[]{\includegraphics[width=.905\textwidth]{./cylinder2.pdf}}
 		% \subfigure[]{\includegraphics[width=.49\textwidth]{./mu.pdf}}
 		\caption{Phase portrait of \eqref{r1vHam}. The orbit $\Gamma_1\!\left(\frac12\right)$ defined by $H(r_1,v)=\frac12$ is a separatrix, separating bounded from unbounded orbits.}\figlab{r1v}
 	\end{center}
 	%  \end{figure}
 	% \caption{$q=r_2q_2$, $x=r_2x_d$, $\epsilon=r_2^2$}
 \end{figure}

%Let $p(h)>0$ denote the minimal period of $\Gamma(h)$, $h\in ]0,\frac12[$. It is standard that $p$ extends analytically to $h=0$ with 
% \begin{align*}
%  p(0) = 2\pi.
% \end{align*}

% 
Notice that by \eqref{Enorm}, we have
\begin{align}
 H(r_1,v) = \frac12 e^2,\eqlab{HmathcalE}
\end{align}
and that $H(1,0)=0$ corresponds to zero eccentricity. 

{Finally, we emphasize that $(r_1,v,x)=(1,0,0)$ is a \textit{zero-Hopf point} of \eqref{V1eqnsgammaNegative1}, with the eigenvalues of the linearization being $\pm i$ and $0$. The center space of $(1,0,0)$ is three-dimensional.}

\begin{remark}
Notice that for the {undamped} Kepler problem, $\delta=0$, the Hamiltonian function appears naturally upon reduction to fixed angular momentum $l$.  
\end{remark}

Upon using \eqref{HmathcalE}, \propref{gammaNeg} follows from the following statement (see also \lemmaref{circ} {and the construction of the open set $\mathcal{U}$ in the previous section}).
\begin{proposition}\proplab{attractor}
 The point $(r_1,v,x)=(1,0,0)$ is an attractor for \eqref{V1eqnsgammaNegative1} on $x>0$.
\end{proposition}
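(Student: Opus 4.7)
The plan is to exploit the zero-Hopf structure at $(1,0,0)$ via normal form theory. First, I would translate the equilibrium to the origin by setting $u=r_1-1$ and work with the complex coordinate $z=u+iv$. The linear part is $\dot z=iz$, $\dot x=0$ (with a coupling $\partial_x \dot r_1|_{(1,0,0)}=2\delta$ that lies in the center direction and will be absorbed into the nonlinear analysis). Since $r_1=1+u$ stays away from $0$ and $\left(1+r_1^2v^2\right)^{\alpha/2}$ is analytic near $v=0$, the full system \eqref{V1eqnsgammaNegative1} is analytic in a neighborhood of the origin, which is what permits a normal form approach.

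Next, I would invoke a normal form theorem for the analytic zero-Hopf singularity to bring the system (up to any prescribed order, with an analytic remainder) into the $S^1$-symmetric form
\begin{align*}
\dot z &= iz\bigl(1+A(|z|^2,x)\bigr)+R_1(z,\bar z,x),\\
\dot x &= B(|z|^2,x)+R_2(z,\bar z,x),
\end{align*}
where $A,B$ are polynomials (or convergent series, under the Br\"uno-type hypotheses available for this class) and $R_1,R_2$ are analytic remainders that vanish to high order. Introducing polar coordinates $z=\rho e^{i\theta}$ yields $\dot\rho=\rho\,\mathrm{Re}\,A(\rho^2,x)+\tilde R_1$, $\dot x=B(\rho^2,x)+\tilde R_2$, $\dot\theta=1+\mathrm{Im}\,A+\dots$, and the stability question reduces to analyzing the planar system in $(\rho,x)\in[0,\infty)^2$ near $(0,0)$, with $\theta$ a fast rotating angle that decouples to the order we need.

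I would then compute the leading-order coefficients of $A$ and $B$. The $\dot x$-equation already has the pure-quadratic leading term $-\tilde\gamma\delta\,x^2$ (from the third line of \eqref{V1eqnsgammaNegative1}), and a direct calculation using \eqref{HmathcalE} gives
\begin{align*}
\dot H = 2\delta x\, r_1^{-\alpha-\beta}\left(1+r_1^2v^2\right)^{\frac{\alpha}{2}}\!\left(-v^2+\frac{r_1-1}{r_1^{2}}\right).
\end{align*}
Averaging the bracketed factor over one period of the unperturbed harmonic orbit $(u,v)=\sqrt{2h}(\cos\theta,\sin\theta)$ yields $\langle-v^2+(r_1-1)/r_1^{2}\rangle=-2h+O(h^{3/2})$, so the averaged/normal-form system has, to leading order,
\begin{align*}
\dot\rho^2 \;\approx\; -4\delta x \rho^2,\qquad \dot x \;\approx\; -\tilde\gamma\delta\,x^2.
\end{align*}
Both right-hand sides are strictly negative on $\{\rho>0,\,x>0\}$, and integration gives $x(t)\sim (\tilde\gamma\delta t)^{-1}$ and $\rho^2(t)\sim t^{-4/\tilde\gamma}\to 0$. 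A Lyapunov function of the mixed form $V(\rho,x)=\rho^{\tilde\gamma/2}+x$ (or the invariant-curve $\rho^{\tilde\gamma}/x^{2}=\mathrm{const}$) makes this rigorous for the truncated system; the analytic remainder terms from the normal form are of higher order in $(\rho,x)$ and do not overturn the sign of $\dot V$ in a small neighborhood of the origin, so $(0,0)$ is asymptotically stable for the reduced $(\rho,x)$ system, and hence $(1,0,0)$ is an attractor of \eqref{V1eqnsgammaNegative1} on $x>0$.

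The main obstacle is the normal form step: the zero-Hopf singularity is notoriously resonant and analytic normalization requires either a truncation-plus-remainder statement or an appeal to recent results on convergent $S^1$-symmetric normal forms. In our setting, what actually must be proven is not full analytic equivalence but the existence of an invariant annular neighborhood of $\{x>0\}\cap\{\rho\text{ small}\}$ on which $V$ is strictly decreasing; obtaining enough control on the non-resonant remainder so that the leading-order averaged signs survive is the delicate part, and it is here that the analyticity of \eqref{V1eqnsgammaNegative1} (as emphasized at the end of \secref{blowupNeg}) is essential.
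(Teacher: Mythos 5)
You have correctly identified the structure ($(1,0,0)$ is a zero-Hopf point, the mechanism of attraction is the slow decay $x\sim\tau^{-1}$ driving a monotone decay of the ``radius'' $|z|^2$, and your formula for $\dot H$ is right), and this is indeed the same strategy as the paper. But your proposal has a genuine gap exactly where you yourself flag it: the normal form step. A zero-Hopf singularity admits only a \emph{finite-order} polynomial normal form in general, leaving a remainder $R_1=O(|(z,x)|^{N+1})$. That remainder is not dominated by the good term: in the region $|z|\ll x^{N}$ the term $x^{N+1}$ in $\dot z$ is not controlled by $-\mathrm{const}\cdot x|z|$, so for your Lyapunov function $V=\rho^{\tilde\gamma/2}+x$ the contribution $\rho^{\tilde\gamma/2-1}\tilde R_1$ to $\dot V$ is unbounded near the $x$-axis and the sign of $\dot V$ is not established there. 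Since the decay rates are only algebraic ($x\sim\tau^{-1}$, $\rho^2\sim\tau^{-2(\alpha+\beta)/\tilde\gamma}$), no finite $N$ obviously rescues this, and ``the analytic remainder terms \dots do not overturn the sign of $\dot V$'' is precisely the assertion that needs proof. The paper closes this gap by first dividing \eqref{V1eqnsgammaNegative1} by $U(r_1,v)=\tilde\gamma\delta r_1^{-\alpha-\beta}(1+r_1^2v^2)^{\alpha/2}$, so that $x'=-x^2$ exactly and the $x=0$ subsystem becomes orbitally linear (div-integrable) after a Birkhoff normal form; this puts the system in the class covered by \cite[Theorem 1.10]{bittmann2018a}, which yields an \emph{exact} sectorial analytic conjugacy to $z_1'=(-\lambda(z_1z_2)+xa_1)z_1$, $z_2'=(\lambda(z_1z_2)+xa_2)z_2$, $x'=-x^2$ with \emph{no remainder}; then $\frac{d}{d\tau}|z_1|^2=2ax|z_1|^2$ holds identically with $a=-\frac{\alpha+\beta}{\tilde\gamma}<0$, and attraction follows from \corref{bittmann} plus continuity of $\Psi^{-1}$ at the origin.

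Two smaller points. First, your averaged coefficient is off: averaging $-v^2+\frac{r_1-1}{r_1^2}$ over the circle already gives $-3h$ rather than $-2h$, and at order $h$ you must also account for the $u$-dependence of the prefactor $r_1^{-\alpha-\beta}$ and for $\langle u\rangle=O(h)$ on the true (non-circular) level set; the correct decay exponent (which matters for the paper's Lemma 6.6, though not for the sign needed here) is $a=-\frac{\alpha+\beta}{\tilde\gamma}$ in the rescaled time. Second, $\theta$ is not a fast angle here: its frequency is $O(1)$ while the drift is $O(x)$, so averaging is only asymptotically justified as $x\to0$, which is again why a remainder-free normal form on a sector, rather than classical averaging, is the tool that actually works.
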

\begin{proof}
 We will use the main result of \cite{bittmann2018a}, which we state in the following form:
 \begin{lemma}\lemmalab{bittmann}\cite[Theorem 1.10]{bittmann2018a} (Normal form of zero-Hopf bifurcation) Consider the analytic system 
 \begin{equation}\eqlab{xybittmann}
 \begin{aligned}
 %\begin{aligned}
    z_1'&= -\lambda_0 (z_1 z_2) z_1 + x F_1(x,z_1,z_2),\\
  z_2'&= \lambda_0 (z_1 z_2) z_2 + x F_2(x,z_1,z_2),\\
   x' &=- x^2,
% \end{aligned}
 \end{aligned}
 \end{equation}
 defined in a neighbourhood of the origin in $\mathbb{C}^3$,
 satisfying
 \begin{align}\eqlab{bittmancond}
\lambda_0(0)\in \mathbb C\backslash \{0\},\quad \operatorname{Re}(a_1+a_2)<0,
\end{align}
where
$$
a_1:=\frac{\partial F_1}{\partial z_1}(0,0),\quad a_2:=\frac{\partial F_2}{\partial z_2} (0,0), 
$$
 %see \cite[Definitions 1.3 and 1.7]{bittmann2018a}.
%  
Then there exist (a) $U\subset \mathbb C^2$ a neighborhood of $0$, (b) $\epsilon>0$, (c) a sector  
$$
 S = \left\{ x\in \mathbb C :\,0<\vert x\vert <\xi,\quad \lvert \operatorname{arg}\left(\frac{x}{i\lambda_0(0)}\right) \rvert<\frac{\pi}{2}+\epsilon\right\},
$$
and finally (d) an $x$-fibered analytic diffeomorphism $\Psi:U\times S\rightarrow \Psi(U\times S)\subset \mathbb C^3$, $\Psi (z,x)=(\psi (z,x),x)$ with $z=(z_1,z_2)$, satisfying: 
\begin{enumerate}
 \item \label{gevrey} $\Psi$ is (asymptotically) tangent to the identity at $(0,0)$, i.e. $$\Psi (z,x)=(z,x)+O(\vert (z,x)\vert^2),\; \; as\; (z,x)\to (0,0),\; (z,x)\in U\times S,$$
 and $\Psi$ and all of its partial derivatives extend continuously to $U\times \overline S$.
 \item $\Psi$ conjugates \eqref{xybittmann} with
 \begin{equation}\eqlab{bittmann2018a}
\begin{aligned}
     z_1' &= (-\lambda(z_1z_2)+x a_1)z_1,\\
   z_2' &=(\lambda(z_1z_2)+x a_2) z_2,\\
  x' &=- x^2,
 \end{aligned}
 \end{equation}
 with $\lambda$ analytic in a neighbourhood of $0$ and $\lambda(0)=\lambda_0(0)$.
\item \label{finall} Let $\overline z=\operatorname{Re}(z)-i\operatorname{Im}(z)$ denote $z\in \mathbb C$ conjugated. Suppose that $ z_2=\overline z_1,\, x\in S\cap \mathbb R_+$ defines an invariant set of \eqref{xybittmann}. Then $\lambda(z_1\overline z_1)\in i\mathbb R\backslash \{0\}$, $a_2=\overline a_1$ and $z_2=\overline z_1,\,x\in S\cap \mathbb R_+$ is also an invariant set of \eqref{bittmann2018a}. 
\end{enumerate}
%  
%  If $z_2=\overline z_1$ is invariant 
   \end{lemma}
   The statement in (\ref{gevrey}), is weaker than in \cite{bittmann2018a}, but it is adequate for our purposes (in particular, the Gevrey-1 properties of $\Psi$ n \cite{bittmann2018a} is not needed here). 
   The statement in item (\ref{finall}) is not made explicit in \cite{bittmann2018a}, but it can be inspected directly from the proof of \cite[Theorem 1.10]{bittmann2018a}. In particular, the diffeomorphism $\Psi$ is constructed as a (weak) $1$-summable formal power series and it is easy to check that the formal map satisfies the statement. Moreover, we should emphasize that \cite[Theorem 1.10]{bittmann2018a} takes its point of departure from an analytic system defined on a neighborhood of $(x,z_1,z_2)=(0,0,0)$ with eigenvalues $\lambda_0,-\lambda_0,0$. In this more general version of the result, the quantities $a_1$ and $a_2$ are implicit. For this reason, we have therefore decided to state the result in the form \lemmaref{bittmann}, where $a_1$ and $a_2$ are explicit. (Notice, that $\Psi$ is not in general summable along the real axis). %The details of the summability properties of $\lambda$ and $F$ are less precise in the version Lemma \ref{bittmann}; 
% for further details, please check \cite{bittmann2018a}.    

 For the convenience of the readers who are not familiar with \cite{bittmann2018a}, we give an outline on how to obtain \lemmaref{bittmann} from that paper. The vector field
$$Y=x^2 \frac{\partial}{\partial x}+(\lambda_0 (z_1 z_2 )z_1 -xF_1 (x,z))\frac{\partial}{\partial z_1}+(-\lambda_0 (z_1 z_2 )z_2 -xF_2 (x,z))\frac{\partial}{\partial z_2}$$
can be expressed as 
$$Y=x^2 \frac{\partial}{\partial x}+( \lambda_0 (0)z_1 -xF_1 (0,0)+\cdots )\frac{\partial}{\partial z_1}+(-\lambda_0 (0 )z_2 -xF_2 (0,0)+\cdots )\frac{\partial}{\partial z_2},$$
where the remainders are at least of degree two in $(x,z_1 ,z_2 )$. If $F_1 (0,0)=F_2 (0,0)=0$ this implies that $Y$ belongs to the class $\mathcal{SN}_{diag}$ with $\lambda =-\lambda_0 (0)$, \cite[Definition 1.1]{bittmann2018a}. In the general case, we introduce the change of variables
$$z_i =\tilde{z}_i +\alpha_i x,\; \; i=1,2,\; \; \alpha_1 =\frac{F_1 (0,0)}{\lambda_0 (0)},\; \alpha_2 =-\frac{F_2 (0,0)}{\lambda_0 (0)}.$$
From now on we assume that we are in the case $F_1 (0,0)=F_2 (0,0)=0$.
The matrix $A(x)$, obtained from the first order term of $F(x,\cdot )$, has the expansion
$$A(x)={\rm diag}(-\lambda_0 (0),\lambda_0 (0))-x\frac{\partial F(0,0)}{\partial z}+\cdots $$
Notice that this expansion is preserved by $(z_1 ,z_2 ,x)\mapsto
(\tilde{z}_1 ,\tilde{z}_2 ,x)$.
Then the residue of the vector field $Y$ is $-(a_1 +a_2 )$, see \cite[Definition 1.3]{bittmann2018a}, and the condition \eqref{bittmann2018a} implies that this residue has positive real part. This implies that $Y$ is \textit{strictly non-degenerate} and Theorem 1.5 in \cite{bittmann2018a} can be applied to find the change $\Psi$ as a formal series. A direct computation shows that the coefficients in the normal form can be expressed in terms of $a_1$ and $a_2$. To justify the convergence of the normal form and the change of variables we observe that the restriction of $Y$ to $x=0$ is orbitally linear. Indeed,
$$Y_{|x=0}=U(z)\left(-z_1 \frac{\partial}{\partial z_1}+z_2 \frac{\partial}{\partial z_2}\right),\; \, U(z)={\lambda_0 (z_1 z_2 )}\neq 0.$$
Then $Y$ is div-integrable and belong to the class $\mathcal{SN}_{diag,0}$, see \cite[Definition 1.9]{bittmann2018a}. Therefore \cite[Theorem 1.10]{bittmann2018a} is applicable.
%    we have deliberately stated \cite[Theorem 1.10]{bittmann2018a} in less precise way to avoid details of the summability properties of $\lambda$ and $F$. 
%    

   Consider \eqref{bittmann2018a} and suppose that $z_2=\overline z_1$ defines an invariant set. On this set, let $z_1=y_1+iy_2$, $z_2=y_1-iy_2$ with $y=(y_1,y_2)^T$ real, $\lambda(z_1\overline z_1)=:i\Omega(z_1\overline z_1)$ with $\Omega$ real valued and $a_1=a-ib$, $a_2 =\overline{a}_1$. Then 
   \begin{equation}\eqlab{bittmanfinal}
   \begin{aligned}
      \dot y &= 
\begin{pmatrix}
a x & \Omega(y_1^2+y_2^2)+b x \\
 -\Omega(y_1^2+y_2^2)-b x & ax
\end{pmatrix}y,\\
   \dot x &=-x^2.
   \end{aligned}
   \end{equation}
%    As in \eqref{bittmancond} we assume that .
    \begin{cor}\corlab{bittmann}
    Consider \eqref{bittmanfinal} and assume $a<0$. Let $(y_1(\tau),y_2(\tau),x(\tau))$ denote a solution with initial condition $x(0)\in S\cap \mathbb R_+$, $(y_1(0),y_2(0))\in B$ with $B$ being a small ball centered at $(0,0)$.  Then this solution is defined in $[0,\infty )$ and
    $$
     y_1(\tau)^2+y_2(\tau)^2 = \frac{A x(\tau)^{-2a} }{x(0)^{-2a} } =\frac{A}{(1+x(0)\tau)^{-2a}},
   $$
   where $A=y_1(0)^2+y_2(0)^2$.
%     Then $\mathcal L_{Y}(L)(x,y_1,y_2) <0$ for all $(x,(y_1,y_2))\in .
% is a Lyapunov function for $(x,y)\in S\cap \mathbb R_+\times U$ with $U$ being a neighborhood of $(0,0)$.
% with $c$ a constant depending on the initial conditions.
   \end{cor}
 \begin{proof}
    This follows from the equality
   $$
     \frac{d}{d\tau}(y_1^2+y_2^2) = 2ax (y_1^2+y_2^2)<0.
   $$
for $x\in S\cap \mathbb R_+$. In fact $B\times (S\cap \mathbb R_+)$ is positively invariant and $x(\tau)=\frac{x(0)}{1+x(0)\tau}$, $\tau\ge 0$.
\end{proof}

%    \vskip 0.3cm
   To complete the proof of \propref{attractor}, we therefore bring \eqref{V1eqnsgammaNegative1} into the form \eqref{xybittmann}. \corref{bittmann}, and the continuity of {$\Psi^{-1}$} at the origin, see item (\ref{gevrey}) in  \lemmaref{bittmann}, then gives the desired result. 
   
%    \note{I do not think that there is a subtlety. We can apply $\Phi$ directly to the solution and $\Phi$ extends continuously to the origin.}
%    There is a subtlety in this argument because the diffeomorphism
%    $\Psi$ is not defined in a full neighborhood of the origin. At this point the condition (1) in Lemma \ref{bittmann} plays a role because it guarantees that the real map
%    $$\Psi :(U\cap \mathbb{R}^2)\times (-\xi ,\xi )\to \mathbb{R}^3$$ is continuous and one-to-one if we define $\Psi (z,0)=(z,0)$. By a well known result in Topology $\Psi$ is an open map. In particular $ \Psi ((U\cap \mathbb{R}^2)\times (-\xi ,\xi ))$ is open in
%    $\mathbb{R}^3$ and $\Psi$ defines a homeomorphism from $(U\cap \mathbb{R}^2)\times (-\xi ,\xi )$ onto its image. Assume that $(\tilde{z}(t),x(t))$ is a solution of \eqref{bittmanfinal}with initial condition lying in $B\times (0,\xi )$. We know from the 
%    previous Lemma that $\tilde{z}(t)\to 0$ as $t\to +\infty$. Then $(z(t),x(t))=\Psi^{-1} (\tilde{z}(t),x(t))$ is a solution of (\ref{bittmann}) defined in $[0,\infty )$ with $z(t)\to 0$ as $t\to +\infty$. Since $\Psi (B\times (-\xi ,\xi ))$ is open in $\mathbb{R}^3$, there exists
%    $\epsilon >0$ such that the origin attracts the real solutions with initial condition satisfying $0<x(0)<\epsilon$, $|z(0)|<\epsilon$.
%    \par
    To find the corresponding changes of variables we proceed in two steps. First, we divide the right hand side of \eqref{V1eqnsgammaNegative1} by the quantity 
  $$
   U(r_1,v):= \tilde \gamma \delta  r_1^{-\alpha-\beta} \left(1+r_1^2v^2\right)^{\frac{\alpha}{2}}>0,
  $$
satisfying $U(r_1,-v)=U(r_1,v)$. This therefore corresponds to a regular transformation of time. Subsequently, we bring the resulting $x=0$ subsystem:
\begin{equation}\label{rev}
\begin{aligned}
 r_1' &=\frac{1}{U(r_1,v)}\frac{\partial H}{\partial v}(r_1,v),\\
 v' &=-\frac{1}{U(r_1,v)}\frac{\partial H}{\partial r_1}(r_1,v),
\end{aligned}
\end{equation}
in a neighborhood of $(r_1,v)=(1,0)$, into the Birkhoff normal form:
\begin{equation}\nonumber
 \begin{aligned}
  \dot y_1 &= \Omega_0(y_1^2+ y_2^2) y_2,\\
  \dot y_2 &=-\Omega_0(y_1^2+ y_2^2) y_1.
 \end{aligned}
 \end{equation}
 $\Omega_0$ is analytic, 
 see e.g. \cite[Corollary 4]{delshams2002a} and notice that (\ref{rev})
is reversible: $(r_1,v)(t)$ is a solution $\Rightarrow$ $(r_1,-v)(-t)$ is a solution.

For future computations, we need the quadratic terms of the transformation $(y_1 ,y_2 )\mapsto (r_1 ,v)$. First we expand the system (\ref{rev}) at $(1,0)$,
\begin{align*}
r_1' &=\frac{v}{\tilde \gamma\delta} +\frac{\alpha +\beta}{\tilde \gamma\delta}v(r_1 -1)+\cdots,\\
v'&=-\frac{r_1 -1}{\tilde \gamma\delta}+\frac{3-\alpha -\beta}{\tilde \gamma\delta}(r_1 -1)^2 +\cdots,
\end{align*}
 and then upon writing $z_1=y_1+iy_2$, $z_2=y_1-iy_2$ and $w_1=r_1-1+v$, $w_2=r_1 -1-iv$, we obtain the system
 \begin{align*}
 z_1'&= -\lambda_0 (z_1 z_2)z_1,\\
 z_2'&= \lambda_0 (z_1 z_2)z_2.
 \end{align*}
 Here
 $$ \lambda_0(z_1z_2) = i\Omega_0(z_1z_2),\quad \lambda_0(0)= \frac{i}{\delta \tilde \gamma}$$
 and
 \begin{align*}
 w_1'&=-\lambda_0 (0)w_1 +P_1 (w_1 ,w_2 ),\\ 
 w_2'&= \lambda_0 (0)w_2 +P_2 (w_1 ,w_2 ),
 \end{align*}
 with $$P_1 (w_1 ,w_2 )=\lambda_0 (0)\left[\frac{1}{4}(3-2\alpha -2\beta )w_1^2 +\frac{1}{2}(3- \alpha - \beta )w_1 w_2 +\frac{3}{4}w_2^2 +\cdots \right]$$
 and $P_2  (w_1 ,w_2 )=\overline P_1 (\overline{w}_2 , \overline{w}_1)$.
 
 Now, let ${Q}(z)=\mathcal O(\vert (z_1,z_2)\vert^2)$ be so that
 \begin{align*}
  w = z+{Q}(z).
 \end{align*}
Then we find that ${Q}=({Q}_1,{Q}_2)^T$  has the following expansion
 $${Q}_1 (z_1 ,z_2 )=-\frac{1}{4}(3-2\alpha -2\beta )z_1^2+\frac{1}{2} (3-\alpha -\beta )z_1z_2 +\frac{1}{4}z_2^2 +\mathcal O(\vert (z_1,z_2)\vert^3),$$
with ${Q}_2$ linked to ${Q}_1$ through conjugacy. 
 
   We now return to the three-dimensional system. In the new variables, $w_1$, $w_2$ and $x$, we obtain
 \begin{align*}w_1'&=\frac{1}{U} \left(\frac{\partial H}{\partial v}-i\frac{\partial H}{\partial r_1}\right)+x\frac{2}{\tilde \gamma}(w_2 +1),\\
 w_2'&=\frac{1}{U} \left(\frac{\partial H}{\partial v}+i\frac{\partial H}{\partial r_1}\right)+x\frac{2}{\tilde \gamma}(w_1 +1).
 \end{align*}
 The change of coordinates defined by $w=z+Q(z)$, $w=(w_1 ,w_2 )^T$, $z=(z_1 ,z_2 )^T$ leads to
 $$z'=\Lambda (z_1 z_2 )z+x\mathcal{F}(z)$$
 where $$\Lambda =\operatorname{diag}(-\lambda_0 ,\lambda_0 ),\quad S=\begin{pmatrix}
0 & 1 \\
1 & 0
\end{pmatrix},$$ and
 $$\mathcal{F}(z)=\frac{2}{{\tilde \gamma}}(I+DQ(z))^{-1}[S(z+Q(z))+(1,1)^T],$$
 {with $DQ$ denoting the Jacobian of $Q=(Q_1,Q_2)$}.
 From the above computations, it follows that
 $$D\mathcal{F}(0)=\left(
\begin{array}{cc}
-\frac{\alpha +\beta}{{\tilde \gamma}} & \star \\
\star & -\frac{\alpha +\beta}{{\tilde \gamma}}
\end{array}
\right).$$
% Since $\mathcal{F}(0)$ does not vanish this system is not in the framework of Lemma \ref{bittmann}. However in the new unkowns $\tilde{z}$, $x$ with
% $$z=\tilde{z}-x\Lambda (0)^{-1} \mathcal{F}(0)$$ 
The system satisfies all the conditions of \lemmaref{bittmann}, in particular
\begin{align}
a=a_1=a_2=-\frac{\alpha +\beta}{\tilde \gamma}<0,\eqlab{a1a2}
\end{align}
for all $0<\tilde \gamma<3$. This completes the proof of \propref{attractor}.
 \end{proof}
 \begin{lemma}\lemmalab{timeblowup2}
  Suppose that $-3<\gamma<0$ and that $u(t)$ is a solution of \eqref{keplerd} where $e(t)\rightarrow 0$ as $t\rightarrow \omega$. Then $\omega<\infty$.
 \end{lemma}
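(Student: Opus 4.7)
The plan is to combine \lemmaref{circ} and \propref{attractor} with the explicit time rescaling from \eqref{trans2} to deduce that only finite original time elapses before collision.

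First, I would observe that $e(t)\rightarrow 0$ forces the trajectory, in the coordinates of \eqref{V1eqnsgammaNegative1}, to approach the zero-Hopf equilibrium $(r_1,v,x)=(1,0,0)$. Indeed, by \lemmaref{circ}, circularization means $v\rightarrow 0$ and $r/l^2=r_1\rightarrow 1$; by \corref{circ1}, it also yields $r(t)\rightarrow 0$, which together with $r/l^2\rightarrow 1$ forces $l\rightarrow 0$, hence $x=l^{\tilde\gamma}\rightarrow 0$. Consequently the tail of the trajectory lies inside the domain where \propref{attractor}, and in particular the normal form of \lemmaref{bittmann} and the explicit decay law of \corref{bittmann}, applies.

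Second, I would carefully track the two successive time rescalings used in the proof of \propref{attractor}. Writing $\tau_1$ for the time of \eqref{V1eqnsgammaNegative1} (so that $d\tau_1/dt = x^{-3/\tilde\gamma}$ by \eqref{trans2}) and $\tau_2$ for the normal-form time (obtained by dividing by the positive factor $U(r_1,v)=\tilde\gamma\delta r_1^{-\alpha-\beta}(1+r_1^2 v^2)^{\alpha/2}$), one has
\begin{align*}
\frac{dt}{d\tau_2} \;=\; \frac{x^{3/\tilde\gamma}}{U(r_1,v)}.
\end{align*}
Since $U(r_1,v)\to \tilde\gamma\delta>0$ as $(r_1,v)\to(1,0)$, $U$ is bounded above and below by positive constants along the tail, so asymptotics in $\tau_2$ translate directly to asymptotics in $t$.

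Third, I would apply \corref{bittmann} to get the explicit formula $x(\tau_2)=x(\tau_{2,0})/(1+x(\tau_{2,0})(\tau_2-\tau_{2,0}))$ and compute
\begin{align*}
\omega - t_0 \;=\; \int_{\tau_{2,0}}^{\infty} \frac{x(\tau_2)^{3/\tilde\gamma}}{U(r_1(\tau_2),v(\tau_2))}\,d\tau_2,
\end{align*}
which by the bounds on $U$ is comparable to $\int^\infty \tau_2^{-3/\tilde\gamma}\,d\tau_2$. This converges precisely when $3/\tilde\gamma>1$, i.e.\ $\tilde\gamma<3$, which holds by hypothesis since $\tilde\gamma=-\gamma\in(0,3)$. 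Hence $\omega<\infty$.

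The argument requires no new ideas beyond what was already built up for \propref{attractor}; the only place where one must be careful is the bookkeeping of the two time rescalings and the verification that the resulting exponent $3/\tilde\gamma$ exceeds $1$ for the full parameter range $-3<\gamma<0$. The latter is exactly the condition $\gamma>-3$, which makes it natural that the boundary case $\gamma=-3$ would be marginal and, conjecturally, separates finite-time collision from infinite-time collision within the circularizing regime.
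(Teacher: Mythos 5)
Your argument is correct and is essentially the paper's proof: both reduce to the estimate $x(\tau)\sim \tau^{-1}$ and the convergence of $\int^\infty x(\tau)^{3/\tilde\gamma}\,d\tau$, which holds precisely because $\tilde\gamma=-\gamma\in(0,3)$. The only cosmetic difference is that you obtain the decay of $x$ via the normal-form time and \corref{bittmann}, whereas the paper reads it off directly from the $x$-equation \eqref{xeqn} using $r_1\to 1$, $v\to 0$; both routes are valid and your bookkeeping of the two time rescalings is accurate.
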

\begin{proof}
We know from the circularization assumption, \lemmaref{circ} and \corref{circ1} that the corresponding solution of \eqref{V1eqnsgammaNegative1} is defined on $[0,\infty )$ and converges to the equilibrium.
 For this we use \eqref{trans2}. 
 The $x$-equation of 
 \eqref{V1eqnsgammaNegative1}, is repeated here for convenience:
 \begin{align}\eqlab{xeqn}
   x'&=-\tilde \gamma \delta  r_1^{-\alpha-\beta} \left(1+r_1^2v^2\right)^{\frac{\alpha}{2}}x^2.
 \end{align}
 Recall that 
 \begin{align}
 \tilde \gamma = -\gamma\in (0,3).\eqlab{tildegamma}
 \end{align}
 Now, $e(t)\rightarrow 0$ implies that $r_1(\tau)\rightarrow 1$ and $v(\tau)\rightarrow 0$. 
 Therefore we can estimate $x$ as follows
 \begin{align*}
  x \sim \frac{1}{\tau},
 \end{align*}
for all $\tau\ge  1$, say. 
%  \end{align'}
%  Therefore, by defining $R=\sup_{\tau \in [0,\infty )} r_1(\tau )<\infty$, the \eqref{xeqn} leads to the differential inequality
% $$x'(\tau )\leq  -\tilde \gamma\delta R^{-\alpha -\beta}x(\tau )^2 ,$$
% for all $\tau\ge 0$.
% Consequently, $$x (\tau )\leq \frac{x(0)}{1+bx(0)\tau},$$
% with $b:=\tilde \gamma\delta R^{-\alpha -\beta}$ for all $\tau\ge 0$.
% 
 By this estimate, together with \eqref{trans2} and \eqref{tildegamma}, we conclude that
\begin{equation}\eqlab{Cauchy} {\omega =\int_0^{\infty} \frac{dt}{d\tau} (\tau )d\tau =\int_0^\infty x(\tau )^{\frac{3}{\tilde \gamma}} d\tau <\infty.}\end{equation}
\end{proof}

\begin{lemma}\lemmalab{p2}
Consider the same assumptions as in \lemmaref{timeblowup2}. Then 
\begin{align*}
 \limsup_{\tau \rightarrow \infty} \vert p(\tau)\vert  \begin{cases}
                     =\infty & \mbox{for $\alpha+\beta-1>0$},\\
                     \in (0,\infty) & \mbox{for $\alpha+\beta-1=0$},\\
                     =0 & \mbox{for $\alpha+\beta-1<0$}.
                    \end{cases}
\end{align*}

\end{lemma}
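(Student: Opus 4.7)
The plan is to transfer the decay rates available in the normal-form coordinates of \propref{attractor} back to the original radial velocity $p$. By \propref{attractor} and the circularization assumption, the solution $(r_1,v,x)(\tau)$ of \eqref{V1eqnsgammaNegative1} is defined on $[0,\infty)$ and converges to $(1,0,0)$ as $\tau\to\infty$. Composing the Birkhoff change $w=z+Q(z)$ with the normal-form diffeomorphism $\Psi$ of \lemmaref{bittmann}, \corref{bittmann} yields the explicit asymptotics
\begin{align*}
x(\tau)=\frac{x(0)}{1+x(0)\tau}, \qquad y_1(\tau)^2+y_2(\tau)^2=A\bigl(1+x(0)\tau\bigr)^{2a},
\end{align*}
with $a=-(\alpha+\beta)/\tilde\gamma<0$ identified in \eqref{a1a2}. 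In particular, as $\tau\to\infty$ one has $x(\tau)\sim \tau^{-1}$ and $\sqrt{y_1(\tau)^2+y_2(\tau)^2}\sim \tau^{a}$.

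To transfer these rates to $v$, note that both $\Psi$ and the Birkhoff change $w=z+Q(z)$ are tangent to the identity at the origin, and that on the reality-invariant set $v=\operatorname{Im}(w_1)=y_2+O(|(y,x)|^2)$. Because the rotation frequency $\Omega(y_1^2+y_2^2)+bx$ in \eqref{bittmanfinal} is bounded away from zero in a neighbourhood of the equilibrium, the $y$-trajectory sweeps out a full angle over each period, so along a suitable sequence $\tau_n\to\infty$ of local maxima of $|v|$ one finds that $|v(\tau_n)|$ is bounded above and below by positive multiples of $\sqrt{y_1(\tau_n)^2+y_2(\tau_n)^2}\sim \tau_n^{a}$. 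Combining this with $p=v/x^{1/\tilde\gamma}$ from \eqref{trans2} and $x\sim\tau^{-1}$ gives
\begin{align*}
\limsup_{\tau\to\infty}|p(\tau)| \;\sim\; \tau^{\,a+1/\tilde\gamma}\quad\text{as }\tau\to\infty,
\end{align*}
and the trichotomy claimed in the lemma follows from the sign of the exponent $a+1/\tilde\gamma$, which is controlled by $\alpha+\beta-1$ since $\tilde\gamma>0$.

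The main obstacle is the quantitative control of $|v|$ by the normal-form amplitude $|y|$: one must verify that $y_2$ attains its envelope $\sqrt{y_1^2+y_2^2}$ (up to a positive multiplicative constant) along infinitely many $\tau_n\to\infty$, despite the higher-order corrections introduced by $\Psi$ and $Q$. This is handled by exploiting the uniform lower bound on the rotation frequency near $(1,0,0)$ together with the slow power-law decay of $|y|$ supplied by \corref{bittmann}, which ensures that over each rotation period the correction terms $O(|(y,x)|^2)$ remain of strictly lower order than $|y|$ itself, so the envelope of $|v|$ is indeed comparable to $|y|$ along the sequence of local maxima.
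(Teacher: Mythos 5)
Your proposal follows the paper's own route: invoke \corref{bittmann} with $a=-\tfrac{\alpha+\beta}{\tilde\gamma}$ to obtain the polynomial decay of the normal-form amplitude, transfer it to $(r_1-1,v)$ through the near-identity changes of variables, and convert to $p$ via $p=v\,x^{-1/\tilde\gamma}$ and $x(\tau)\sim\tau^{-1}$. In one respect you are more careful than the paper: the paper only records the one-sided bound $\limsup_\tau\vert v(\tau)\tau^{(\alpha+\beta)/\tilde\gamma}\vert<\infty$ before asserting the trichotomy, whereas the cases $=\infty$ and $\in(0,\infty)$ also need a matching lower bound along a subsequence; your rotation argument (the frequency $\Omega+bx$ is bounded away from zero near the equilibrium, so $y_2$, hence $v$, realizes a fixed fraction of the envelope once per revolution) is precisely that missing ingredient.

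Two points, however, prevent the argument from closing as written. First, the sign bookkeeping: $a+\tfrac{1}{\tilde\gamma}=\tfrac{1-(\alpha+\beta)}{\tilde\gamma}=-\tfrac{\alpha+\beta-1}{\tilde\gamma}$, which is \emph{positive} exactly when $\alpha+\beta-1<0$. Your asymptotics therefore deliver $\limsup\vert p\vert=\infty$ for $\alpha+\beta-1<0$ and $\limsup\vert p\vert=0$ for $\alpha+\beta-1>0$ --- the first and third cases of the statement in the opposite order. You cannot dispose of this with ``the trichotomy follows from the sign of the exponent''; you must either locate an error in your rate for $v$ or conclude that the statement's cases are transposed. (The paper's own final display, $\limsup\vert p(\tau)\tau^{(\alpha+\beta-1)/\tilde\gamma}\vert<\infty$, has the same feature: for $\alpha+\beta-1>0$ it forces $p(\tau)\to0$.) Second, your identity $v=y_2+O(\vert(y,x)\vert^2)$ omits a term that is \emph{linear} in $x$: since $\mathcal F(0)=\tfrac{2}{\tilde\gamma}(1,1)^T\neq0$, the normal-form construction requires the preliminary shift $z_i=\tilde z_i+\alpha_i x$, which contributes a drift of order $x\sim\tau^{-1}$ to $v$ (this is the secular infall $\bar v\approx-2\delta x$). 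The envelope of $\vert v\vert$ is thus governed by $\max(\tau^{a},\tau^{-1})$, and when $a<-1$ the drift dominates and changes the exponent of $\vert p\vert$; this must be accounted for before the trichotomy can be read off from $\alpha+\beta-1$ alone.
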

\begin{proof}
 We use \corref{bittmann} with $a=-\frac{\alpha+\beta}{\tilde \gamma}$, see \eqref{a1a2}, and the properties of the mapping $\Phi$ in \lemmaref{bittmann} to conclude that 
 \begin{align*}
  (r_1(\tau)-1)^2 + v(\tau)^2 \sim \tau^{-\frac{2(\alpha+\beta)}{\tilde \gamma}}.
 \end{align*}
 In particular,
 \begin{align*}
  \limsup_{\tau\rightarrow \infty} \vert v(\tau) \tau^{\frac{\alpha+\beta}{\tilde \gamma}}\vert <\infty.
 \end{align*}
Therefore upon using $x(\tau)\sim \tau^{-1}$ and \eqref{trans2}, we conclude that 
\begin{align*}
 \limsup_{\tau\rightarrow \infty} \vert p(\tau) \tau^{\frac{\alpha+\beta-1}{\tilde \gamma}}\vert <\infty.
\end{align*}
The result therefore follows. 
\end{proof}
\begin{lemma}\lemmalab{theta2}
Consider the same assumptions as in \lemmaref{timeblowup2}. Then 
\begin{align*}
 \theta(t) \rightarrow \infty,
\end{align*}
for $t\rightarrow \omega$. 
\end{lemma}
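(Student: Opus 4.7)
The plan is to compute the evolution of $\theta$ in the blowup time $\tau$ and show that it grows linearly there, while $\tau$ ranges over all of $[0,\infty)$ as $t$ ranges over $[0,\omega)$.

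First I would start from \eqref{leqn}, which gives $\dot\theta = l/r^2$. Using the change of variables \eqref{trans2}, namely $r = x^{2/\tilde\gamma} r_1$, $l = x^{1/\tilde\gamma}$ and $dt/d\tau = x^{3/\tilde\gamma}$, a short computation yields the remarkably simple expression
\begin{align*}
\frac{d\theta}{d\tau} \;=\; \frac{l}{r^2}\cdot\frac{dt}{d\tau} \;=\; \frac{x^{1/\tilde\gamma}}{x^{4/\tilde\gamma} r_1^{\,2}}\cdot x^{3/\tilde\gamma} \;=\; \frac{1}{r_1^{\,2}}.
\end{align*}
Note that every power of $x$ cancels, so the issue of $x\to 0$ is irrelevant here.

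Next, I would invoke the hypotheses. As in the proof of \lemmaref{timeblowup2}, the assumption $e(t)\to 0$ together with \lemmaref{circ} and \corref{circ1} implies that the corresponding solution of \eqref{V1eqnsgammaNegative1} is defined for all $\tau\in[0,\infty)$ and satisfies $(r_1(\tau),v(\tau),x(\tau))\to(1,0,0)$ as $\tau\to\infty$. In particular, there exists $\tau_0>0$ such that $r_1(\tau)\le 2$ for all $\tau\ge\tau_0$.

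Combining these two ingredients,
\begin{align*}
\theta(\tau) - \theta(\tau_0) \;=\; \int_{\tau_0}^{\tau} \frac{ds}{r_1(s)^2} \;\ge\; \frac{\tau-\tau_0}{4} \;\longrightarrow\; \infty \quad\text{as }\tau\to\infty.
\end{align*}
Since $t\to\omega^-$ corresponds to $\tau\to\infty$ (again by the proof of \lemmaref{timeblowup2}, where $\omega$ is obtained as the finite Cauchy integral \eqref{Cauchy} over $[0,\infty)$ in $\tau$), we conclude $\theta(t)\to\infty$ as $t\to\omega^-$.

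No step appears to present a serious obstacle: the computation of $d\theta/d\tau$ is exact and all weights cancel, and the required convergence $(r_1,v,x)\to(1,0,0)$ in $\tau$-time is already established by \propref{attractor}. The only point that deserves mild care is the bookkeeping between the original time $t$ and the desingularized time $\tau$, which is handled by \lemmaref{timeblowup2}.
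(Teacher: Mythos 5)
Your computation $\frac{d\theta}{d\tau}=\frac{1}{r_1^2}$ and the subsequent use of $r_1(\tau)\to 1$ to conclude that $\int_0^\infty \frac{d\theta}{d\tau}\,d\tau$ diverges is exactly the argument in the paper; you merely make the final lower bound ($r_1\le 2$ eventually, hence $\frac{d\theta}{d\tau}\ge\frac14$) a little more explicit. The proposal is correct and follows the same route.
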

\begin{proof}
 As in the proof of \lemmaref{theta1}, we use $l=r^2 \dot{\theta}$, $\dot\theta=\frac{d}{dt}\theta$. In particular, in combination with \eqref{trans2} and \eqref{Cauchy}, we find that 
$$\frac{d\theta}{d\tau} =\frac{d\theta}{dt} \frac{dt}{d\tau}=\frac{1}{r_1^2}\to 1,$$
for $\tau\rightarrow \infty$. 
Consequently, the limit
$$\lim_{t\to  \omega^-} \theta (t)=\theta (0)+\int_0^\infty \frac{d\theta}{d\tau}(\tau )d\tau,$$ is infinite.
\end{proof}

 \section{The critical case $\gamma=0$}\seclab{gammaEq0}
  Finally, in the case $\gamma=0$, we prove the following
  \begin{proposition}\proplab{gamma0}
Consider \eqref{rvl} for $\gamma=0$, $\delta>0$. {Then there is no circularization for any initial condition. Also, for each $\delta >0$ there exists a non-empty open set of initial conditions such that ($\lim_{t\rightarrow \omega} e(t) = 4\delta^2$ if $\delta\in \left(0,\frac12\right)$) and ($\lim_{t\rightarrow \omega} e(t) =1$ if $\delta\ge \frac12$). }
%and suppose that $r(t)\rightarrow 0$ for $t\rightarrow \omega$. %Then 
%\begin{align*}
  %$$ & \begin{cases} \lim_{t\rightarrow \omega} e(t) 
  %\delta^2,1\} & %\mbox{for $\delta\in \left(0,\frac12\right)$},\\
  %  \lim_{t\rightarrow \omega} e(t)                                 %=1 &   %\mbox{for $\delta\ge \frac12$}.
  %                                    \end{cases}
%\end{align*}
\end{proposition}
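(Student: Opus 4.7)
My approach is to exploit the scaling invariance of \lemmaref{critical} via the blowup of \secref{blowupNeg}, which in the critical case $\gamma=0$ reduces the system to a planar one. Indeed, setting $\rho_1^{-\gamma}=1$ in \eqref{V1eqnsgammaNegative0}, the $(r_1,v)$-equations decouple from $\rho_1$ and we obtain the autonomous planar system
\begin{align*}
 r_1' &= v + 2\delta\, r_1^{-\alpha-\beta+1}\bigl(1+r_1^2 v^2\bigr)^{\alpha/2},\\
 v' &= -\frac{r_1-1}{r_1^3} - 2\delta\, r_1^{-\alpha-\beta}\bigl(1+r_1^2 v^2\bigr)^{\alpha/2} v,
\end{align*}
with $e = v^2 + (1-r_1)^2/r_1^2$ by \eqref{Enorm}. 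Absence of circularization then follows directly from \lemmaref{circ}: it would require $(r_1,v)\to(1,0)$, but $r_1'(1,0)=2\delta>0$, so $(1,0)$ is not an equilibrium and cannot be approached.

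Next I locate the equilibria in $\{r_1>0\}$. From $r_1'=0$ the common damping factor reads $2\delta r_1^{-\alpha-\beta}(1+r_1^2v^2)^{\alpha/2}=-v/r_1$, which when substituted into $v'=0$ yields the clean relation $v^2=(r_1-1)/r_1^2$. Re-inserting this into $r_1'=0$, using $1+r_1^2v^2=r_1$ together with $\alpha+2\beta=3$, the fractional powers telescope to give $v^2=4\delta^2/r_1$. Matching the two expressions produces the unique equilibrium
\[
 (r_1^\ast,v^\ast) = \left(\tfrac{1}{1-4\delta^2},\; -2\delta\sqrt{1-4\delta^2}\right),
\]
which lies in $\{r_1>0\}$ exactly for $\delta\in(0,\tfrac12)$, with $e(r_1^\ast,v^\ast)=(r_1^\ast-1)/r_1^\ast=4\delta^2$ (notably independent of $\alpha$ and $\beta$). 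For $\delta\in(0,\tfrac12)$ I then verify asymptotic stability by computing the $2\times 2$ Jacobian: using the equilibrium relations and the simplification $U^\ast:=2\delta(r_1^\ast)^{-\alpha-\beta}(1+(r_1^\ast)^2(v^\ast)^2)^{\alpha/2}=2\delta(1-4\delta^2)^{3/2}$, the trace is a strictly negative multiple of $U^\ast$, while the principal cancellations (driven by the identity $v^{\ast\,2}r_1^\ast=1-1/r_1^\ast$) reduce the determinant to a strictly positive quantity; in the illustrative subcase $\alpha=0,\,\beta=\tfrac32$ one obtains explicitly $\operatorname{tr} J=-3\delta(1-4\delta^2)^{3/2}$ and $\det J=(1-4\delta^2)^4$. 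Hence $(r_1^\ast,v^\ast)$ is a stable focus or node with a non-empty open basin $\mathcal V\subset\{r_1>0\}\times\mathbb R$; via the chart map \eqref{trans2} and the $SO(2)$-action on phase space, as in the construction of $\mathcal U$ in \secref{gammaPos}, this yields the required open set $\mathcal U\subset\mathcal P$, and \lemmaref{circ} together with the formula for $e$ gives $e(t)\to 4\delta^2$.

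The main obstacle is the case $\delta\ge\tfrac12$, where the equilibrium leaves $\{r_1>0\}$ and the attracting behavior producing $e\to 1$ must be sought at infinity. My plan is to compactify via $s=1/r_1$ so that $e=v^2+(s-1)^2\to 1$ precisely when $(s,v)\to(0,0)$, and then to resolve the degenerate origin in $(s,v)$-space by a weighted blowup whose weights are dictated by the dominant balance of the resulting vector field (the leading terms being $-s^2 v$ together with $-2\delta s^{\beta+1}(s^2+v^2)^{\alpha/2}$ in $s'$, and $-s^2(1-s)-2\delta s^{\alpha+\beta}(s^2+v^2)^{\alpha/2}v$ in $v'$). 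The aim is to expose a hyperbolic (or semi-hyperbolic) attractor on the exceptional divisor, reachable from an open set of initial conditions; the borderline parameter $\delta=\tfrac12$ will presumably require an additional successive blowup and a center-manifold argument, since the finite-$r_1$ equilibrium degenerates to infinity precisely at that value.
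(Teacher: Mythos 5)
Your reduction to the planar system, the non-equilibrium argument at $(r_1,v)=(1,0)$ for the absence of circularization, and the treatment of $\delta\in\left(0,\tfrac12\right)$ all coincide with the paper's proof: the equilibrium $\left(\tfrac{1}{1-4\delta^2},\,-2\delta\sqrt{1-4\delta^2}\right)$, the value $e=4\delta^2$, and stability via negative trace and positive determinant (the paper computes $\det J=(2\delta+1)^4(1-2\delta)^4$ in general, consistent with your subcase; your trace in the subcase $\alpha=0,\beta=\tfrac32$ carries a spurious factor of $\delta$, but the sign is all that matters). One cosmetic point: you assert positivity of the determinant for general $(\alpha,\beta)$ with $\alpha+2\beta=3$ but only verify it in one subcase; this should be checked, though it does hold.

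The genuine gap is the case $\delta\ge\tfrac12$, which is half of the second claim and which you only outline as a plan. Your intended route (compactify via $s=1/r_1$ and perform a weighted blowup of the degenerate point at infinity) is in fact exactly what the paper does — its substitution $r_1=\mu_1^{-2}$, $v=\mu_1 v_1$, with a further time rescaling by $\mu_1^{-3}$, is the weighted blowup with weights $(2,1)$ in $(s,v)$ — but none of the substantive steps are carried out. What is missing: (i) the resulting desingularized system and the identification of the unique equilibrium $(v_{1,*},0)$ on the exceptional divisor $\mu_1=0$, where $v_{1,*}<0$ is the negative root of $\tfrac12 v_1^2=1+\delta|v_1|^{\alpha}v_1$; (ii) the linearization showing this point is asymptotically stable in the full two-dimensional system precisely for $\delta>\tfrac12$ (it is a saddle for $\delta<\tfrac12$, so the dichotomy in $\delta$ must actually be exhibited here, not just at the finite equilibrium); and (iii) the borderline case $\delta=\tfrac12$, where an eigenvalue vanishes and a center-manifold computation is required to conclude asymptotic stability — you correctly anticipate this but do not perform it. Without (i)–(iii) the claim $e(t)\to 1$ on an open set for $\delta\ge\tfrac12$ is not established, so the proof is incomplete as written.
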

 To prove \propref{gamma0}, we follow \secref{blowup}, specifically \secref{blowupNeg}, and use a change of variables that can be summed up by the formulas:
\begin{align}
 r= \rho_1^2 r_1,\quad p=\frac{v}{\rho_1},\quad l=\rho_1,\quad \frac{d\tau}{dt}=\frac{1}{\rho_1^{3}}.\eqlab{transcrit}
\end{align}
 This gives 
\begin{equation}\eqlab{X1}
\begin{aligned}
 r_1 '& = v + 2\delta  r_1^{-\alpha-\beta} \left(1+r_1^2v^2\right)^{\frac{\alpha}{2}}r_1,\\
 v'&=-\frac{r_1-1}{r_1^3}-2\delta r_1^{-\alpha-\beta} \left(1+r_1^2v^2\right)^{\frac{\alpha}{2}}v,
\end{aligned}
\end{equation}
and $\rho_1'=- \delta r_1^{-\alpha-\beta} \left(1+r_1^2v^2\right)^{\frac{\alpha}{2}}\rho_1$, where $()'=\frac{d}{d\tau}$. Notice that in this critical case, $\rho_1$ decouples; this relates directly to \lemmaref{critical}. 

The absence of circularization is now easily proved. In fact, the existence of some initial condition with the circularization property would imply that $(r_1 (t),v(t))\to (1,0)$ as $t\to \omega$ for some solution of \eqref{X1}. This is impossible since the point $(1,0)$ is not an equilibrium. 
% Notice that $(r_1,v)$ decouples. 

Let $W_1$ denote the planar vectorfield defined by \eqref{X1}. Then
% \begin{equation}
% \begin{aligned}
%   v'&=-l_2^{\alpha+1}+l_2^{\alpha+3}  - 2\delta\left(l_2^4+v^2\right)^{\frac{\alpha}{2}} v,\\
%  l_2'&=-\left(\frac12  l_2^{\alpha-1} v+\delta \left(l_2^4+v^2\right)^{\frac{\alpha}{2}}\right)l_2,
% \end{aligned}
% \end{equation}
an easy computation shows that the divergence of $W_1$ is given by 
\begin{align}\eqlab{div}
 \operatorname{div} W_1(r_1,v) = -2(\alpha+\beta)r_1^{-\alpha-\beta} \left(1+r_1^2v^2\right)^{\frac{\alpha}{2}}.
\end{align}
Since this quantity is strictly negative for $r_1>0$, it follows from Dulac's theorem that $W_1$ has no limit cycles within $r_1>0$. %This is true for all $\alpha=0,1,2,3$.

\begin{lemma}
Suppose that $\gamma=0$. Then there exists an equilibrium of \eqref{X1} with $r_1>0$ if and only if $\delta\in\left(0,\frac12\right)$. In the affirmative case, this equilibrium is hyperbolically attracting and given by 
 \begin{align}\eqlab{eq}
  r_1 &=\frac{1}{1-4\delta^2},\quad v =-2\delta \sqrt{1-4\delta^2}.
 \end{align}
 Moreover, if $\delta\in \left(0,\frac12\right)$ then $e$, given by \eqref{HmathcalE}, equals $$e =4  \delta^2,$$
 along
  \eqref{eq}.
\end{lemma}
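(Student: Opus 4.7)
The plan is to address the claim in three stages: (i) locate the equilibria and show they exist precisely for $\delta\in(0,\tfrac12)$, (ii) compute the eccentricity there, and (iii) verify hyperbolic attractivity by analyzing the Jacobian.

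For (i), I introduce the shorthand $A:=2\delta r_1^{-\alpha-\beta}(1+r_1^2v^2)^{\alpha/2}>0$, so that \eqref{X1} reads $r_1'=v+Ar_1$ and $v'=-(r_1-1)/r_1^3-Av$. At an equilibrium the first equation forces $v=-Ar_1$, and then the second gives $A^2=(r_1-1)/r_1^4$; in particular one must have $r_1>1$. Back-substitution yields $v^2=A^2r_1^2=(r_1-1)/r_1^2$, which produces the striking identity $1+r_1^2v^2=r_1$ and hence $A=2\delta r_1^{-\alpha-\beta+\alpha/2}$. Squaring and invoking the critical relation $\alpha+2\beta=3$ gives $A^2=4\delta^2/r_1^3$; comparing with $A^2=(r_1-1)/r_1^4$ yields
\[ \frac{r_1-1}{r_1}=4\delta^2,\qquad\text{i.e.}\qquad r_1=\frac{1}{1-4\delta^2}. \]
The constraints $r_1>0$ and $r_1>1$ are both equivalent to $\delta\in(0,\tfrac12)$, and then $v=-Ar_1=-2\delta\sqrt{1-4\delta^2}$.

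For (ii), direct substitution into $e=v^2+(r_1-1)^2/r_1^2$ (cf.~\eqref{Enorm} and \eqref{HmathcalE}) gives $e=4\delta^2(1-4\delta^2)+(4\delta^2)^2=4\delta^2$.

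For (iii), I compute the $2\times 2$ Jacobian $J$ of \eqref{X1} at the equilibrium. Its trace coincides with the divergence in \eqref{div} evaluated at the equilibrium, which is strictly negative because $\alpha+\beta>0$. For the determinant, I plan to use the four simplifications $v=-Ar_1$, $r_1v^2=(r_1-1)/r_1$, $r_1^2v^2=r_1-1$ and $1+r_1^2v^2=r_1$ to reduce each entry of $J$ to a polynomial in the single variable $\epsilon:=4\delta^2=(r_1-1)/r_1$, with $\alpha,\beta$ as parameters (noting also $A^2=\epsilon(1-\epsilon)^3$). A systematic expansion yields
\[ \frac{\det J}{(1-\epsilon)^3}\;=\;1+(2-\alpha-2\beta)\,\epsilon+\bigl[\alpha(1-\beta)+\alpha(\beta-1)\bigr]\epsilon^2+\bigl[-\alpha^2+\alpha^2\bigr]\epsilon^3. \]
The quadratic and cubic brackets vanish identically, and the critical relation $\alpha+2\beta=3$ collapses the $\epsilon$-coefficient to $-1$. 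Hence
\[ \det J=(1-\epsilon)^4=(1-4\delta^2)^4>0\quad\text{for }\delta\in(0,\tfrac12), \]
and combined with $\operatorname{tr} J<0$ this forces both eigenvalues of $J$ to have strictly negative real part, making the equilibrium a hyperbolic sink.

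The main obstacle is the algebraic clearance in (iii). A priori it is not obvious that two substantial polynomial expressions in $\epsilon$ should conspire, via $\alpha+2\beta=3$, to produce the perfect fourth power $(1-4\delta^2)^4$. The cancellations at order $\epsilon^2$ and $\epsilon^3$ rely crucially on the simplifications $1+r_1^2v^2=r_1$ and $r_1^2v^2=r_1-1$, which collapse the damping coefficient $A$ together with its partial derivatives at the equilibrium into tractable rational functions of $r_1$; without these the expression for $J_{21}$ in particular would remain unwieldy. Careful bookkeeping in the expansion is therefore the key technical step.
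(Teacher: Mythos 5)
Your proposal is correct and follows essentially the same route as the paper: you eliminate the damping factor by combining the two equilibrium equations (equivalently to the paper's use of $\tfrac{r_1'v}{r_1}+v'=0$), exploit the identity $1+r_1^2v^2=r_1$ together with $\alpha+2\beta=3$ to solve for $r_1=\tfrac{1}{1-4\delta^2}$, and establish hyperbolic attractivity from $\operatorname{tr}J<0$ (the divergence) and $\det J=(1-4\delta^2)^4>0$, which matches the paper's $(2\delta+1)^4(1-2\delta)^4$.
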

\begin{proof}
From the $r_1'=0$-equation with $r_1>0$, we first realize that $v<0$.
 Then we consider $\frac{r_1' v}{r_1}+v'=0$ which leads to
 \begin{align*}
  v =- r_1^{-\frac{1}{2}} \left(1-r_1^{-\frac{3}{2}}\right)^{\frac12}.
 \end{align*}
%  Consequently $r_1>1$.
Inserting this into $r'=0$ gives
\begin{align*}
 2\delta r_1-\sqrt{r_1(r_1-1)} =0.
\end{align*}
This equation has a solution 
\begin{align*}
 r_1 &= \frac{1}{1-4\delta^2}>1,
\end{align*}
if and only if $\delta\in \left(0,\frac12\right)$.
The statement regarding the existence of the equilibrium follows. Now regarding the stability, we compute the determinant of the Jacobian $J$ of $W_1$ at the equilibrium. After some algebraic manipulations, we find that 
\begin{align*}
 \operatorname{det} J=(2\delta +1)^4(1-2\delta)^4>0.
\end{align*}
Given that the trace of $J$ is negative, see \eqref{div}, we conclude that the equilibrium is hyperbolic and either a stable node or a stable focus.
% upon dividing both sideby $r_1^{\frac{\alpha}{2}}$.
\end{proof}
\begin{remark}
    Upon using \eqref{transcrit}, we realize that the equilibrium \eqref{eq} of \eqref{X1} corresponds to a solution $(r(t),p(t),l(t))$ of \eqref{rvl0}
    \begin{align*}
        r(t) = l(t)^2 \frac{1}{1-4\delta^2},\quad p(t) = r'(t)=-2l(t)^{-1}\delta \sqrt{1-4\delta^2},
    \end{align*}
    where 
    \begin{align*}
       l(t) = \left(l(0)^3-3\delta (1-4\delta^2)^{\frac32}t\right)^{\frac13}.
    \end{align*}
    with $t\in [0,\omega)$, $\omega = \frac{l(0)^3}{3\delta (1-4\delta^2)^{\frac32}}$.  

We emphasize that on the basis of \eqref{eq}, $\mathcal E$ does not have a limit. Indeed, we have $\theta(t)\rightarrow \infty$ as $t\rightarrow \omega$, by proceeding as in \lemmaref{theta2}, and $e(t)\rightarrow 4\delta^2$. Note that $\mathcal{E}=-vie^{i\theta} +(\frac{l^2}{r}-1)e^{i\theta}$.
\end{remark}
The claim for the case $\delta\in \left(0,\frac12\right)$ follows easily. To analyze the remaining case we introduce a new change of variables, defined by
$$r_1 =\frac{1}{\mu_1^2},\; \; v=\mu_1 v_1 ,\; \; \rho_2 =\rho_1\sqrt{ r_1},\; \; \frac{d\tau}{ds} =\mu_1^{-3}.$$
In particular, 
\begin{align}\eqlab{rrho2}
r=\rho_2^2.
\end{align}
This gives the following equations
\begin{equation}\eqlab{v1mu1}
\begin{aligned}
    v_1' &=\frac12 v_1^2 - 1 + \mu_1^2 - \delta (\mu_1^2 + v_1^2)^{\frac{\alpha}{2}}v_1,\\
    \mu_1 '&=- \left(\frac12 v_1+\delta(\mu_1^2 + v_1^2)^{\frac{\alpha}{2}} \right)\mu_1,
\end{aligned}
\end{equation}
and $\rho_2' =\frac12 \rho_2 v_1$. Now, $\rho_2$ decouples. We show the phase portrait of \eqref{v1mu1} (which is equivalent to the phase portrait of \eqref{X1} for $\mu_1>0$) for $\delta=0.2$ and $\alpha=1$ in \figref{mu1v1critical}. \begin{figure}[h!]
 	\begin{center}
 		%\subfigure[]{\includegraphics[width=.465\textwidth]{./figures/hesterPPlaneNew.pdf}}
 		%\subfigure[]{\includegraphics[width=.49\textwidth]{./figures/hesterMatlabNew.pdf}}
 		{\includegraphics[width=.6\textwidth]{./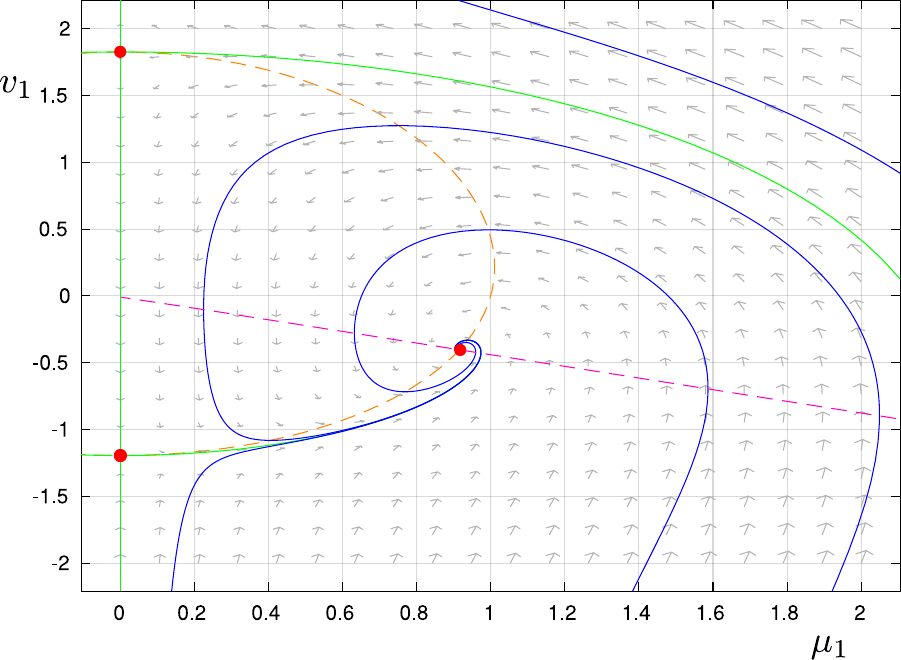}}
%  		\subfigure[]{\includegraphics[width=.905\textwidth]{./cylinder2.pdf}}
 		% \subfigure[]{\includegraphics[width=.49\textwidth]{./mu.pdf}}
 		\caption{Phase portrait of \eqref{v1mu1}  for $\delta=0.2$ and $\alpha=1$. There are three equilibria (red dots) in this case, two of saddle-type along $\mu_1=0$ and one stable (focus type) within $\mu_1>0$. The equilibrium within $\mu_1>0$ corresponds to \eqref{eq}. The green orbits are invariant manifolds, while the dotted lines (orange and magenta) are the nullclines. }\figlab{mu1v1critical}
 	\end{center}
 	%  \end{figure}
 	% \caption{$q=r_2q_2$, $x=r_2x_d$, $\epsilon=r_2^2$}
 \end{figure}

{It is easy to see that there is unique negative solution $v_{1,*}<0$ of the following equation for $v_1$
$$
    \frac12 v_1^2 = 1+\delta |v_1|^{\alpha}v_1.
$$
 In turn, we obtain a unique equilibrium $(v_{1,*},0)$ of \eqref{v1mu1} within $v_1\le 0, \mu_1=0$. }
This equilibrium is asymptotically stable whenever $\delta \geq \frac{1}{2}$ (it is a saddle for $0<\delta<\frac12$, see \figref{mu1v1critical}). This is proved by linearization if $\delta >\frac{1}{2}$ and using a center manifold computation if $\delta =\frac{1}{2}$. The conclusion follows from the expression of the eccentricity in terms of the variables $r_1$ and $v$.

\begin{remark}\remlab{r1zerocrit}
In the critical case, we can relatively easy show that the two equilibria: \eqref{eq} of \eqref{X1} for $\delta\in (0,\frac12)$ and $(v_{1,*},0)$ of \eqref{v1mu1} for $\delta\ge \frac12$, define the omega limit sets of \eqref{keplerd} on the set of collisions, i.e. $u(t)\rightarrow 0$. For simplicity, we restrict attention to $\delta\ge \frac12$ case and consider the equations \eqref{v1mu1} together with 
\begin{align}\eqlab{rho1eqn}
\rho_2'=\frac12 \rho_2v_1.
\end{align}
\begin{lemma}
Consider $\gamma=0$ and $\delta\ge \frac12$ and suppose that $u(t)\rightarrow 0$. Then $(v_1(\tau),\mu_1(\tau))\rightarrow (v_{1,*},0)$.
\end{lemma}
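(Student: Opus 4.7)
The plan is to translate the collision hypothesis into an integral condition on the decoupled factor $\rho_2$ and then exploit the planar structure of \eqref{v1mu1} via Poincar\'e--Bendixson. Since $u(t)\to 0$ means $r(t)\to 0$, and \eqref{rrho2} gives $r=\rho_2^2$, we have $\rho_2\to 0$ as $t\to \omega$. Integrating the decoupled equation $\rho_2'=\tfrac{1}{2}\rho_2 v_1$ yields
\[
\rho_2(s)=\rho_2(0)\exp\!\left(\tfrac{1}{2}\int_0^{s} v_1(\sigma)\,d\sigma\right),
\]
so the collision is equivalent to $\int_0^{s_{\infty}} v_1(\sigma)\,d\sigma =-\infty$, where $[0,s_\infty)$ is the maximal forward interval of the $(v_1,\mu_1)$--trajectory in the rescaled time $s$.

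Next I would show the planar trajectory stays in a compact subset of $\{\mu_1\ge 0\}$ and therefore is defined for all $s\ge 0$. From the $\mu_1$--equation in \eqref{v1mu1}, $\mu_1'<0$ whenever $\mu_1^2+v_1^2$ is sufficiently large, which bounds $\mu_1$ above. From the $v_1$--equation, whenever $v_1$ is very negative both $\tfrac{1}{2}v_1^2$ and $-\delta(\mu_1^2+v_1^2)^{\alpha/2}v_1$ are positive, so $v_1'>0$ there and $v_1$ is bounded below. The only remaining escape possibility is $v_1\to+\infty$; but then $v_1$ is eventually large and positive, so $\int_0^{s} v_1(\sigma)\,d\sigma$ is eventually monotonically increasing and cannot tend to $-\infty$, contradicting the collision condition. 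Consequently the orbit is confined to a compact set, $s_\infty=\infty$, and its omega--limit set $\Omega\subset \{\mu_1\ge 0\}$ is nonempty, compact, connected and flow--invariant.

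The structural analysis of $\Omega$ combines Dulac's criterion with a description of the equilibria on $\{\mu_1\ge 0\}$. The divergence formula \eqref{div} shows that the planar vector field has strictly negative divergence on $\{r_1>0\}$, which in the $(v_1,\mu_1)$--chart rules out periodic orbits. For $\delta\ge \tfrac12$ the equilibrium $(v_{1,*},0)$ is asymptotically stable (hyperbolically for $\delta>\tfrac12$, and via a center manifold computation for $\delta=\tfrac12$); any additional equilibrium on $\{\mu_1=0\}$ has positive $v_1$--coordinate and is a hyperbolic saddle with its stable manifold lying in $\{\mu_1>0\}\cup\{\mu_1=0\}$. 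A heteroclinic graph in $\Omega$ would require an orbit emanating from $(v_{1,*},0)$, which is impossible because that equilibrium has no unstable direction; hence $\Omega$ reduces to a single equilibrium.

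Finally, I would exclude the saddle: convergence to an equilibrium $(v_1^+,0)$ with $v_1^+>0$ would give $\int_0^s v_1(\sigma)\,d\sigma\to +\infty$, again contradicting the collision condition. Therefore $\Omega=\{(v_{1,*},0)\}$, which is the stated conclusion. The main technical obstacle is the boundedness step: for small $\alpha$ the term $\tfrac{1}{2}v_1^2$ can dominate the damping for large positive $v_1$, so finite--$s$ blowup of $v_1$ is a priori possible, and ruling it out requires feeding the collision hypothesis back into the planar dynamics through the integral identity for $\rho_2$.
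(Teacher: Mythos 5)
Your overall strategy is sound and, in its key step, genuinely different from the paper's. Both arguments use the decoupling of $\rho_2$, Dulac's criterion via \eqref{div}, and Poincar\'e--Bendixson, and both exclude convergence to equilibria with $v_1>0$ through the equation $\rho_2'=\tfrac12\rho_2 v_1$. Where you diverge is in ruling out unbounded orbits: the paper builds an explicit trapping curve $F=G\cup H$ (an arc of the ellipse $\tfrac12 v_1^2-1+\mu_1^2=0$ together with part of the $\mu_1$-axis), shows the orbit must eventually lie in the region where $v_1'>0$, deduces $v_1\in[-c,0)$ and hence $\mu_1\to\infty$, and contradicts this with the sign of $\mu_1'$. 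You instead derive a priori bounds from sign considerations and feed the collision hypothesis back in through the integral identity $\rho_2(s)=\rho_2(0)\exp\bigl(\tfrac12\int_0^s v_1\bigr)$, which kills the escape $v_1\to+\infty$ in one line. That identity is an attractive simplification and also gives $s_\infty=\infty$ for free once $v_1$ is bounded below (since $\int_0^{s_\infty}v_1=-\infty$ with $v_1\ge -C$ forces $s_\infty=\infty$).

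However, your boundedness step has a genuine flaw as written. The claim that $\mu_1'<0$ whenever $\mu_1^2+v_1^2$ is sufficiently large is false: from \eqref{v1mu1}, $\mu_1'<0$ iff $\tfrac12 v_1+\delta(\mu_1^2+v_1^2)^{\alpha/2}>0$, and for $\alpha<1$ this fails when $v_1$ is very negative and $\mu_1$ is moderate, since $\delta|v_1|^{\alpha}$ cannot beat $\tfrac12|v_1|$. Using your lower bound $v_1\ge -C$ first, the estimate $(\mu_1^2+v_1^2)^{\alpha/2}\ge\mu_1^{\alpha}$ repairs this for $\alpha>0$ (one needs $\mu_1>(C/(2\delta))^{1/\alpha}$), but the case $\alpha=0$ is admissible here ($\gamma=0$ with $\beta=3/2$) and then $\mu_1'>0$ on the whole strip $\{v_1<-2\delta\}$ regardless of $\mu_1$; you need the extra observation that $v_1'>0$ throughout that strip when $\delta\ge\tfrac12$, so the orbit crosses $v_1=-2\delta$ at most once and $\mu_1$ is eventually non-increasing. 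A second, smaller gap: Poincar\'e--Bendixson's unbounded alternative gives only $\limsup$ of the orbit equal to infinity, not $v_1\to+\infty$; to say ``$v_1$ is eventually large and positive'' you should note that (given $\mu_1\le M$) the set $\{v_1\ge K,\ \mu_1\le M\}$ is positively invariant for $K$ large when $\alpha<1$, while for $\alpha\ge1$ and $\delta\ge\tfrac12$ the term $-\delta(\mu_1^2+v_1^2)^{\alpha/2}v_1$ already bounds $v_1$ from above. Finally, your exclusion of graphics is incomplete (you only rule out cycles through $(v_{1,*},0)$); the cleanest fix is to note that the negative divergence \eqref{div} excludes any closed graphic by Green's theorem, not just periodic orbits.
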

\begin{proof}
Since $u(t)\rightarrow 0$, we have that $\rho_2(\tau)\rightarrow 0$, see \eqref{rrho2}.
By \eqref{div} there can be no limit cycles (\eqref{v1mu1} is equivalent with \eqref{X1}). Consequently, it follows from Poincar\'e-Bendixson that either $(v_1(\tau),\mu_1(\tau))$ limits to an equilibrium as $\tau\rightarrow \infty$ or $(v_1(\tau),\mu_1(\tau))$ goes unbounded. $(v_{1,*},0)$ is the unique equilibrium within $v_1\le 0$ and we have already established that it is asymptotically stable for $\delta\geq \frac12$. All other equilibria are contained within $v_1> 0$ and consequently we would have $\rho_2(\tau)\rightarrow \infty$ cf. \eqref{rho1eqn}. Therefore, in order to complete the proof, it remains to exclude the possibility: 
\begin{align}\eqlab{final2}
\mbox{$(v_1(\tau),\mu_1(\tau))$ goes unbounded and $\rho_2(\tau)\rightarrow 0$.}
\end{align}

Let $G$ denote the arc of the ellipse
\begin{align*}
\frac12 v_1^2-1+\mu_1^2 = 0
\end{align*}
within $v_1\le 0$. It intersects $\mu_1=0$ in the point $(v_1,\mu_1)=(-\sqrt{2},0)$ and $v_1=0$ in a point $(v_1,\mu_1)=(0,1)$. Next, let $H$ denote the interval $(1,\infty)$ on the $\mu_1$-axis. Finally, we  put $F:=G\cup H$. 

It should be clear from \eqref{v1mu1} that $F$  bounds a region below where $v_1'>0$ and $\mu_1\ge 0$. If \eqref{final2} holds then $(v_1(\tau),\mu_1(\tau))$ is eventually, i.e for all $\tau>0$ large enough, contained in the region below $F$. Indeed, if this were not the case, then we can again arrive at the contradiction $\rho_2(\tau)\rightarrow \infty$ cf. \eqref{rho1eqn}. Here we have also used that the $v_1$-axis is invariant. {Note also the following property of the system: orbits cannot cross $H$ at two points. Otherwise the arc of the orbit connecting these two points together with the segment joining them in $H$ would define a negatively {or positively} invariant region without any equilibrium.} Since $v_1'(\tau)>0$ in the region below $F$, $v_1(\tau)$ is contained in an interval $[-c,0)$, $c>0$, and \eqref{final2} therefore implies that $v_1(\tau)\rightarrow 0^-,\mu_1(\tau)\rightarrow \infty$. But this is impossible: We have $\mu_1'<0$ for all 
\begin{align*}
   \mu_1\ge \left(\frac{c}{2\delta} \right)^{\frac{1}{\alpha}},
\end{align*}
using $v(\tau)\in [-c,0)$ and \eqref{v1mu1}.
\end{proof}

\end{remark} 
\section{Completing the proof of \thmref{main}}\seclab{completing}
The statement of \thmref{main} item (1) regarding circularization follows from \propref{gammaPos}, \propref{gammaNeg} and \propref{gamma0}.  
% Finally, $(2)$ follows from 
% \lemmaref{timeblowup1}, \lemmaref{timeblowup2} and \lemmaref{timeblowup3}. 
In particular, (1a) follows from \propref{gammaPos}, \lemmaref{p1} and \lemmaref{theta1}.
Moreover, (1b) follows from \propref{gammaNeg}, \lemmaref{p2} and \lemmaref{theta2}.
Finally, (2) follows from 
\lemmaref{timeblowup1} and \lemmaref{timeblowup2}. 

\section{Discussion}\seclab{discussion}
{In the present paper, we have provided a complete description of circularization within a general class of dissipation forces, see \eqref{Dfunc}; this class goes all the way back to See and Poincar\'e, \cite{lecons,See}. In contrast to Poincar\'e in \cite{lecons}, who claimed that circularization occurs for all $\alpha$ and $\beta$ sufficiently large, we have shown that circularization (on an open set of initial conditions) only occurs for $-3<\gamma=\alpha+2\beta-3<0$. Interestingly, circularization for  $-3<\gamma<0$ is accompanied by finite time blowup $\omega<\infty$, see \figref{diagram} where the main results are summarized. In other words, \textit{circularization and global time existence is impossible for \eqref{keplerd} within the class \eqref{Dfunc}, $\alpha\ge 0$, $\beta\ge 0$, $(\alpha,\beta)\ne (0,0)$}, see also \lemmaref{timeblowup2}.

Our approach was based on desingularization and blowup. It is our expectation that this approach, which was also successful in \cite{kristiansen2023revisiting} for the description of the case of linear damping $\alpha=\beta=0$, can be used to address some of the remaining open questions. In particular, in future work we aim to provide a complete characterization of unbounded solutions. We already have some partial results in this direction. At the same time, we also plan to present a full proof of the conjecture that the open set in \thmref{main} coincides with the set for which $u(t)\rightarrow 0$. We only addressed this in the critical case, see \remref{r1zerocrit}.}

\bibliography{refs}
\bibliographystyle{plain}

\end{document}